\newtheorem{theorem}{Theorem}[section]
\newtheorem{corollary}[theorem]{Corollary}
\newtheorem{lemma}[theorem]{Lemma}
\newtheorem{conjecture}[theorem]{Conjecture}
\newtheorem{claim}[theorem]{Claim}
\newtheorem{proposition}[theorem]{Proposition}
\newcommand{\ch}{\mathit{ch}}
\newcommand{\ds}{d^\Sigma}
\newcommand{\eop}{\qquad\hspace*{\fill}\rule{1.2ex}{1.2ex}}
\newcommand{\eps}{\varepsilon}
\newcommand{\nog}{\widetilde{\mathbf{g}}}
\newcommand{\og}{\mathbf{g}}
\newcommand{\half}{\tfrac12}
\newcommand{\NOS}{\mathbb{N}}
\newcommand{\OS}{\mathbb{S}}
\newcommand{\MF}{\mathcal{F}}
\newcommand{\MM}{\mathcal{M}}
\newcommand{\MP}{\mathcal{MP}}
\newenvironment{proof}{\begin{trivlist}\item[]{\bf Proof}\mbox{ \ }}%
        {\qquad\hspace*{\fill}$\Box$\end{trivlist}}
\newcommand{\qite}[1]{\noindent\leavevmode\hangindent1\parindent%
        \noindent\hbox to1\parindent{#1\hss}\ignorespaces}
\newcommand{\qitee}[1]{\noindent\leavevmode\hangindent1.5\parindent%
        \noindent\hbox to1.5\parindent{#1\hss}\ignorespaces}
\newcommand{\qiteee}[1]{\noindent\leavevmode\hangindent2\parindent%
        \noindent\hbox to2\parindent{#1\hss}\ignorespaces}
\newcommand{\qittee}[1]{\noindent\leavevmode\hangindent3\parindent%
        \noindent\hbox to3\parindent{\hskip1.5\parindent#1\hss}%
        \ignorespaces}
\title{\textbf{A Unified Approach to Distance-Two Colouring\\[1mm]
    of Graphs on Surfaces}}
\author{\quad\\Omid Amini\,$^\ast$, \ Louis Esperet\,$^\dagger$, \ and \
  Jan van den Heuvel\,$^\ddagger$\\[3mm]
  $^\ast$\,\emph{CNRS -- DMA, \'Ecole Normale Sup\'erieure, Paris,
    France}\\[1mm]
  $^\dagger$\,\emph{CNRS -- Laboratoire G-SCOP, Grenoble,
    France}\\[1mm]
  $^\ddagger$\,\emph{Department of Mathematics, London School of Economics,
    London, U.K.}}
\date{ }
\begin{document}

\maketitle

{\renewcommand{\thefootnote}{\relax}

  \footnotetext{This paper benefited greatly from helpful comments of
    anonymous referees. The authors would like to thank the referees for
    careful reading of the paper and for their constructive suggestions.
    
    \hskip7.5pt The research for this paper was started during a visit of
    LE and JvdH to the Mascotte research group at INRIA Sophia-Antipolis,
    where OA was a PhD student (joint with \'Ecole Polytechnique). The
    authors like to thank the members of Mascotte for their hospitality.

    \hskip7.5pt JvdH's visit to INRIA Sophia-Antipolis was partly supported
    by a grant from the Alliance Programme of the British Council.

    \hskip7.5pt Part of this research has been conducted while OA was
    visiting McGill University in Montreal. He warmly thanks Bruce Reed for
    providing the possibility for such a visit.

    \hskip7.5pt Email: \texttt{oamini@math.ens.fr},
    \texttt{louis.esperet@g-scop.fr}, \texttt{jan@maths.lse.ac.uk}}}

\begin{abstract}
\noindent
In this paper we introduce the notion of $\Sigma$-colouring of a graph~$G$:
For given subsets~$\Sigma(v)$ of neighbours of~$v$, for every $v\in V(G)$,
this is a proper colouring of the vertices of~$G$ such that, in addition,
vertices that appear together in some~$\Sigma(v)$ receive different
colours. This concept generalises the notion of colouring the square of
graphs and of cyclic colouring of graphs embedded in a surface. We prove a
general result for graphs embeddable in a fixed surface, which implies
asymptotic versions of Wegner's and Borodin's Conjecture on the planar
version of these two colourings. Using a recent approach of Havet \emph{et
  al.}, we reduce the problem to edge-colouring of multigraphs, and then
use Kahn's result that the list chromatic index is close to the fractional
chromatic index.

Our results are based on a strong structural lemma for graphs embeddable in
a fixed surface, which also implies that the size of a clique in the square
of a graph of maximum degree~$\Delta$ embeddable in some fixed surface is
at most $\frac32\,\Delta$ plus a constant.
\end{abstract}

\clearpage
\section{Introduction}

Most of the terminology and notation we use in this paper is standard and
can be found in any text book on graph theory (such as~\cite{BoMu08}
or~\cite{Die05}). All our graphs and multigraphs will be finite. A
\emph{multigraph} can have multiple edges; a \emph{graph} is supposed to be
simple. We will not allow loops. The vertex and edge set of a graph~$G$ are
denoted by~$V(G)$ and~$E(G)$, respectively (or just~$V$ and~$E$, if the
graph~$G$ is clear from the context).

Given a graph~$G$, the \emph{chromatic number} of~$G$, denoted~$\chi(G)$,
is the minimum number of colours required so that we can properly colour
its vertices using those colours. If we colour the edges of~$G$, we get the
\emph{chromatic index}, denoted~$\chi'(G)$. The \emph{list chromatic
  number} or \emph{choice number}~$\mathit{ch}(G)$ is the minimum value~$k$
such that if we give each vertex~$v$ of~$G$ a list~$L(v)$ of at least~$k$
colours, then we can find a proper colouring in which each vertex gets
assigned a colour from its own private list. The list chromatic index
$\mathit{ch}'(G)$ is defined analogously for edges.

The \emph{square~$G^2$} of a graph~$G$ is the graph with vertex set~$V(G)$,
with an edge between any two different vertices that have distance at most
two in~$G$. A proper vertex colouring of the square of a graph can also be
seen as a vertex colouring of the original graph satisfying:

\qite{$\bullet$}vertices that are adjacent receive different colours, and

\qite{$\bullet$}vertices that have a common neighbour receive different
colours.

\noindent
Another way to formulate these conditions is as `vertices at distance one
or two must receive different colours'. This is why the name
\emph{distance-two colouring} is also used in the literature.

\smallskip
In this paper we consider a colouring concept that generalises the concept
of colouring the square of a graph, but that also can be used to study
different concepts such as \emph{cyclic colouring of plane graphs}
(definition will be given later).

For a vertex $v\in V$, let~$N(v)$ (or $N_G(v)$ if we want to specify the
graph under consideration) be the set of vertices adjacent to~$v$. Suppose
that for each vertex $v\in V$, we are given a subset
$\Sigma(v)\subseteq N(v)$ of its neighbourhood. We call such a collection a
\emph{$\Sigma$-system} for~$G$.

A \emph{$\Sigma$-colouring} of~$G$ is an assignment of colours to the
vertices of~$G$ so that:

\qite{$\bullet$}vertices that are adjacent receive different colours, and

\qite{$\bullet$}vertices that appear together in some~$\Sigma(v)$ receive
different colours.

\smallskip
When additionally each vertex~$v$ has its own list~$L(v)$ of colours from
which its colour must be chosen, we talk about a \emph{list
  $\Sigma$-colouring}.

We denote by $\chi(G;\Sigma)$ the minimum number of colours required for a
$\Sigma$-colouring to exist. Its list variant is denoted by
$\ch(G;\Sigma)$, and is defined as the minimum integer~$k$ such that for
each assignment of a list~$L(v)$ of at least~$k$ colours to vertices
$v\in V$, there exists a proper $\Sigma$-colouring of~$G$ in which all
vertices are assigned colours from their own lists.

Notice that we trivially have $\chi(G)=\chi(G;\varnothing)$ and
$\chi(G^2)=\chi(G;N_G)$; and the same relations holds for the list variant
($\varnothing$ assigns the empty set to each vertex).

We define the \emph{width} of a $\Sigma$-system of $G$ as
$\Delta(G;\Sigma)=\max_{v\in V}|\Sigma(v)|$. It is clear that we always
need at least $\Delta(G;\Sigma)+1$ colours in a proper $\Sigma$-colouring.
In the case $\Sigma\equiv N_G$, there exist plenty of graphs~$G$ that
require $O(\Delta(G)^2)$ colours (where $\Delta(G)=\Delta(G;N_G)$ is the
usual maximum degree of~$G$). But for planar graphs, it is known that a
constant times~$\Delta(G)$ colours is enough (even for list colouring). We
will take a closer look at this in Subsection~\ref{col-sq} below.

Following Wegner's Conjecture on colouring the square of planar graphs (see
also next subsection), we propose the following conjecture.

\begin{conjecture}\label{con1}\mbox{}\\*
  There exist constants $c_1,c_2$ and~$c_3$ such that for all planar
  graphs~$G$ and any $\Sigma$-system for~$G$, we have
  \begin{align*}
    \chi(G;\Sigma)\:&\le\:
    \bigl\lfloor\tfrac32\,\Delta(G;\Sigma)\bigr\rfloor+c_1;\\
    \ch(G;\Sigma)\:&\le\:
    \bigl\lfloor\tfrac32\,\Delta(G;\Sigma)\bigr\rfloor+c_2;\\
    \ch(G;\Sigma)\:&\le\:
    \bigl\lfloor\tfrac32\,\Delta(G;\Sigma)\bigr\rfloor+1,
    \qquad\qquad\text{if $\Delta(G;\Sigma)\ge c_3$}.
  \end{align*}
\end{conjecture}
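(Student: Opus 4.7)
The plan is to attack all three parts of Conjecture~\ref{con1} through a single reduction to edge colouring of an auxiliary multigraph, pushing the arguments beyond the asymptotic regime. Following Havet \emph{et al.}, to each pair $(G,\Sigma)$ I would associate a multigraph $H$ whose proper edge colourings correspond to proper $\Sigma$-colourings of~$G$ and which satisfies $\Delta(H)\le\Delta(G;\Sigma)$. The list version of the same construction converts list edge colourings of~$H$ into list $\Sigma$-colourings of~$G$, so all three parts of the conjecture reduce to strong bounds on $\chi'(H)$ and $\ch'(H)$.

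The central geometric input is the strong structural lemma announced in the abstract, which bounds the size of cliques in $G^2$ by $\tfrac32\Delta+O(1)$ for planar~$G$. I would lift this to a density bound on~$H$: every odd $S\subseteq V(H)$ with $|S|\ge 3$ satisfies
\begin{equation*}
  \frac{2\,|E(H[S])|}{|S|-1} \:\le\: \bigl\lfloor\tfrac32\,\Delta(G;\Sigma)\bigr\rfloor + C,
\end{equation*}
for a constant $C$ depending only on the surface, the proof going through Euler's formula and a discharging argument on~$G$. Combined with Edmonds' matching-polytope theorem this gives $\chi'_f(H)\le\lfloor\tfrac32\,\Delta(G;\Sigma)\rfloor+C$, where $\chi'_f$ denotes the fractional chromatic index.

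For parts~(i) and~(ii) I would invoke Kahn's theorem that $\ch'(H)\le(1+o(1))\,\chi'_f(H)$ and absorb the $o(\Delta)$ slack into a surface-dependent constant by handling $\Delta(G;\Sigma)$ below a threshold ad hoc, yielding $c_1$ and $c_2$. For part~(iii), the tight $+1$ bound in the regime $\Delta(G;\Sigma)\ge c_3$, the plan is to invoke the Goldberg--Seymour theorem (now a theorem of Chen--Jing--Zang) that $\chi'(H)\le\max\{\Delta(H)+1,\lceil W(H)\rceil\}$ for every multigraph $H$, together with a version of the density bound sharpened to $+1$ in place of $+C$ using the flexibility granted by ``$\Delta$ large''. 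For the list analogue of~(iii) one would need a list version of Goldberg--Seymour, which is currently open.

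The main obstacle is precisely this last step. Kahn's $(1+o(1))$ factor is intrinsically asymptotic and cannot yield the additive~$+1$ bound of part~(iii), so the asymptotic route taken in the rest of the paper genuinely cannot prove the conjecture as stated. For the non-list statement the Goldberg--Seymour theorem suffices, but the list version of~(iii) requires either a breakthrough on list edge colouring of multigraphs or a detailed case analysis of the very restricted planar configurations that realise equality in the density bound above. Since part~(iii) also includes Wegner's Conjecture as the special case $\Sigma\equiv N_G$, a complete proof of Conjecture~\ref{con1} is likely to need substantial new ideas beyond the techniques developed here, which is presumably why the authors state it as a conjecture rather than a theorem.
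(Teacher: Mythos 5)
You have not given a proof, and you correctly sense why: the statement you were asked about is Conjecture~\ref{con1} of the paper, which the paper itself does not prove — it only proves the asymptotic relaxation, Theorem~\ref{th1}, with the bound $\bigl(\tfrac32+\eps\bigr)\beta$. Beyond that, your opening reduction already fails as stated. There is in general no multigraph $H$ whose proper edge colourings correspond to proper $\Sigma$-colourings of $G$ with $\Delta(H)\le\Delta(G;\Sigma)$: the conflict graph of an edge-colouring instance is the line graph of a multigraph, hence claw-free, whereas already for $\Sigma\equiv N_G$ the conflict graph is $G^2$, which contains induced claws for very simple planar graphs (take the spider obtained from $K_{1,3}$ by subdividing each edge once; the centre and the three leaves induce a $K_{1,3}$ in $G^2$). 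In the paper the passage to edge-colouring is only \emph{local}: the structural Lemma~\ref{lem1} produces a very special pair $(X,Y)$, the graph with $Y$ deleted (after contractions) is coloured by minimality of the counterexample, and only the degree-four vertices of $Y$ — each adjacent to exactly two big vertices, with condition~(ii) forcing any two conflicting $Y$-vertices to share the same pair $X^y$ — are encoded as edges of a multigraph on $X$. It is exactly this ``very special'' structure that makes the conflicts among $Y$-vertices expressible as adjacency or parallelism of edges in $H$ (Lemma~\ref{lem:ext}); no such encoding exists globally.

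Even granting the reduction, the quantitative steps do not deliver the conjectured bounds. Kahn's theorem (Theorem~\ref{kahn-main}) gives $\ch'(H)\le(1+\eps)\,\chi'_f(H)$ only for $\Delta$ above a threshold, and there the slack is $\eps\Delta$, which grows with $\Delta$ and cannot be ``absorbed into a surface-dependent constant''; handling small $\Delta$ ad hoc does not help with large $\Delta$, so your claimed derivation of $c_1$ and $c_2$ collapses — this is precisely why the paper proves only the $\bigl(\tfrac32+\eps\bigr)\beta$ bound and leaves the $+c_2$ statement as a conjecture. Your density bound for odd sets also does not follow from the clique bound announced in the abstract: an odd set $S$ with many edges in $H[S]$ need not correspond to a $\Sigma$-clique of $G$, and in the paper the relevant density condition (H3) is extracted from (S3) of Lemma~\ref{lem1} by discharging, not from Theorem~\ref{th2}. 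Finally, the ``$+1$ sharpening'' of the density bound needed for part three is asserted without any argument, and the list analogue of Goldberg--Seymour that part three would require is, as you say, open. So what you have is a programme with two unproved (and, in the case of the global reduction, false as stated) ingredients, plus an honest admission that the key steps are out of reach; the statement remains a conjecture both in the paper and after your attempt.
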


\noindent
If $\Sigma\equiv\varnothing$ (hence $\Delta(G;\Sigma)=0$), then the Four
Colour Theorem implies that the smallest possible value for~$c_1$ is four;
while the fact that planar graphs are always 5-list colourable but not
always 4-list colourable, shows that the smallest possible value for~$c_2$
is five.

Our main result is that Conjecture~\ref{con1} is asymptotically correct:
$\ch(G;\Sigma)\le\frac32\,\Delta(G;\Sigma)+o\bigl(\Delta(G;\Sigma)\bigr)$.
In fact, we can prove this asymptotic result holds for general surfaces.

\begin{theorem}\label{th1}\mbox{}\\*
  For every surface~$S$ and any real $\eps>0$, there exists a
  constant~$\beta_{S,\eps}$ such that the following holds for all
  $\beta\ge\beta_{S,\eps}$. If~$G$ is a graph embeddable in~$S$, with a
  $\Sigma$-system of width at most $\beta$, then
  $\ch(G;\Sigma)\le\bigl(\frac32+\eps \bigr)\,\beta$.
\end{theorem}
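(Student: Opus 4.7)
The plan is to reduce list $\Sigma$-colouring of $G$ to list edge-colouring of a carefully constructed auxiliary multigraph $H$, following the approach of Havet et al.\ alluded to in the abstract, and then to invoke Kahn's theorem, which states that for any multigraph the list chromatic index is at most the fractional chromatic index plus a lower-order term. Since for multigraphs the fractional chromatic index equals $\max\bigl(\Delta(H),\,\Gamma(H)\bigr)$ by Edmonds' matching polytope theorem, with $\Gamma(H)=\max_{U}2|E(H[U])|/(|U|-1)$ ranging over subsets of odd order at least $3$, the task reduces to constructing $H$ with both $\Delta(H)$ and $\Gamma(H)$ bounded by $\bigl(\tfrac32+o(1)\bigr)\beta$.

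For the construction, I would introduce an edge $e_v$ in $H$ for each $v\in V(G)$ and a slot vertex $s_u$ in $H$ for each $u\in V(G)$, declaring $e_v$ incident with $s_u$ exactly when $v$ lies in a prescribed conflict set of $u$. The conflict sets are to be chosen, using the strong structural lemma for graphs embeddable in~$S$, so that two vertices of $G$ whose colours are forced to differ always share some slot of $H$, while at the same time each slot receives at most $\bigl(\tfrac32+o(1)\bigr)\beta$ incidences; this bounds $\Delta(H)$. Inheriting the lists by setting $L(e_v):=L(v)$, any list edge-colouring of $H$ pulls back to a list $\Sigma$-colouring of $G$, so it remains to prove $\ch'(H)\le\bigl(\tfrac32+\eps\bigr)\beta$ for $\beta\ge\beta_{S,\eps}$.

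With $\Delta(H)$ under control, the main obstacle is bounding the odd-set density $\Gamma(H)$ by $\bigl(\tfrac32+o(1)\bigr)\beta$. Translated back to $G$, this is an extremal statement: for every subset $U\subseteq V(G)$, the number of vertices of $G$ whose conflicts, under the chosen assignment, are entirely represented inside $U$ is at most roughly $\tfrac34\beta\,|U|$. This is precisely the density control that the strong structural lemma is designed to provide; in particular the bound $\omega(G^2)\le\tfrac32\Delta+O(1)$ mentioned in the abstract is exactly of this flavour, and combining the structural lemma with Euler's formula applied to~$S$ and a judicious choice of conflict sets should deliver the required density estimate. Once both $\Delta(H)$ and $\Gamma(H)$ are bounded, Kahn's theorem finishes the proof.
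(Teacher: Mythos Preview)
Your proposal has a genuine structural gap in the reduction step. You want to turn \emph{every} vertex $v\in V(G)$ into an edge $e_v$ of a multigraph $H$, with $e_v$ incident to a slot $s_u$ whenever $v$ lies in the conflict set of $u$. But an edge of a multigraph has exactly two endpoints, so this forces you to choose, for each $v$, at most two hubs $u_1,u_2$ such that \emph{every} $\Sigma$-conflict involving $v$ is witnessed at $u_1$ or $u_2$. There is no reason such a choice exists: a vertex $v$ may lie in $\Sigma(t)$ for many different $t$, and its $\Sigma$-neighbours need not all be funnelled through two hubs. The structural lemma does not assert anything of the sort for arbitrary vertices; it only produces a special subset $Y$ of degree-four vertices, each of which genuinely has its conflicts concentrated at two big neighbours. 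So the wholesale translation ``vertex $\mapsto$ edge'' cannot work, and your density claim about $\Gamma(H)$ is left hanging on a construction that is not well-defined.

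The paper's argument avoids this by running a minimal-counterexample induction. Cases (S1) and (S2) of the structural lemma are disposed of by degeneracy and contraction. In case (S3) one first colours $V\setminus Y$ (this is a smaller instance, so it is colourable by minimality), and only the vertices of $Y$ are turned into edges of $H$ --- which is legitimate precisely because each $y\in Y$ has its conflicts channelled through $X^y=\{x_1,x_2\}$. The price is that the residual lists $L(e_y)$ have \emph{variable} sizes, roughly $\bigl(\tfrac32+\eps\bigr)\beta-(\sigma(x_1)-d_H(x_1))-(\sigma(x_2)-d_H(x_2))$, so the vanilla statement ``$\ch'(H)\le(1+o(1))\chi'_f(H)$'' does not apply; instead one needs the stronger form of Kahn's theorem asserting that a list edge-colouring exists whenever the vector $(1/|L(e)|)_e$ lies in $(1-\delta)\,\mathcal{MP}(H)$. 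Verifying that membership is the content of Lemma~\ref{lem-mp}, and it is here --- not in a global $\Gamma(H)$ bound --- that the density inequality of (S3) is actually used.
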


\noindent
A trivial lower bound for the (list) chromatic number of a graph~$G$ is the
\emph{clique number~$\omega(G)$}, the maximum size of a clique in~$G$. For
graphs with a $\Sigma$-system, we can define the following related concept.
A \emph{$\Sigma$-clique} is a subset $C\subseteq V$ such that every two
different vertices in~$C$ are adjacent or appear together in
some~$\Sigma(v)$. Denote by $\omega(G;\Sigma)$ the maximum size of a
$\Sigma$-clique in~$G$. Then we trivially have
$\ch(G;\Sigma)\ge\omega(G;\Sigma)$, and so Theorem~\ref{th1} means that for
a graph~$G$ embeddable in some fixed surface~$S$, we have
$\omega(G;\Sigma)\le\frac32\,\Delta(G;\Sigma)+o(\Delta(G;\Sigma))$.

But in fact, the structural result we use to prove Theorem~\ref{th1} fairly
easily gives $\omega(G;\Sigma)\le\frac32\,\Delta(G;\Sigma)+O(1)$.

\begin{theorem}\label{th2}\mbox{}\\*
  For every surface~$S$, there exist constants~$\beta_S$ and~$\gamma_S$
  such that the following holds for all $\beta\ge\beta_S$. If~$G$ is a
  graph embeddable in~$S$, with a $\Sigma$-system of width at most $\beta$,
  then every $\Sigma$-clique in~$G$ has size at most
  $\frac32\,\beta+\gamma_S$.
\end{theorem}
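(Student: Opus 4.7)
I plan to bound a $\Sigma$-clique $C$ of size $k$ in an embedded graph $G\hookrightarrow S$ by combining Euler's formula on $S$ with the width constraint $|\Sigma(v)|\le\beta$. First, for each non-adjacent pair $\{u,w\}\subseteq C$ I pick a witness $v(u,w)$ with $u,w\in\Sigma(v(u,w))$; let $W$ denote the chosen witnesses that lie outside $C$, and set $d_v:=|\Sigma(v)\cap C|\in\{2,\dots,\beta\}$ for each $v\in W$. (Internal witnesses, in $C$, are handled similarly by absorbing their contribution into the count of edges inside $C$.) Then the auxiliary subgraph $H=G[C\cup W]$ is simple and embeds in $S$.

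Two ingredients supply the relevant inequalities. Applying Euler's formula to $H$ yields $|E(H)|\le 3(k+|W|)+c_S$ for a constant $c_S$ depending only on the Euler genus of $S$, and edge counting gives $|E(H)|\ge E_1+\sum_{v\in W}d_v$, where $E_1$ counts the $G$-edges inside $C$. Meanwhile the $\Sigma$-covering condition gives $\binom{k}{2}-E_1\le\sum_{v\in W}\binom{d_v}{2}\le\tfrac{\beta-1}{2}\sum_{v\in W}d_v$, using $d_v\le\beta$. A complementary bipartite Euler bound on the $C$-$W$ edges gives the sharper $\sum_v d_v\le 2(k+|W|)+c'_S$. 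Together with the size constraints $2|W|\le\sum_v d_v\le\beta|W|$, these inequalities produce a linear upper bound on $k$ in terms of $\beta$.

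The main obstacle is the sharp constant: a crude combination of the above only yields $k\le 2\beta+O_S(1)$, off by a factor of $\tfrac{4}{3}$. To recover the target constant $\tfrac{3}{2}$, I would invoke the strong structural lemma for graphs embedded in $S$ mentioned in the abstract; this presumably quantifies the Wegner--Jonas extremal configuration (three ``hub'' witnesses forming a triangle, each covering roughly a third of the $\Sigma$-adjacencies of $C$) in an arbitrary surface, thereby pinning down the possible structure of the $(C,W)$-layer in~$H$. With the structural lemma as additional input, a refined discharging or case analysis on the embedding of $H$ --- distributing Euler charges between $C$ and $W$ according to face-length constraints coming from a Jonas-style analysis of a maximum pair of witnesses --- should yield $k\le\tfrac{3}{2}\beta+\gamma_S$.
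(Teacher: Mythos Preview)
Your proposal is not yet a proof: the first half honestly yields only $k\le 2\beta+O_S(1)$, as you say, and the second half is a hope (``should yield'') rather than an argument. Invoking ``the strong structural lemma'' in the abstract is not enough; that lemma (Lemma~\ref{lem1} in the paper) is a statement about an edge-maximal embedded graph~$G$, not about your auxiliary subgraph $H=G[C\cup W]$, and there is no indication of how discharging on~$H$ would interact with it to cut the constant from $2$ down to $\tfrac32$.

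The paper's actual route is quite different from your direct Euler attack on $C\cup W$. It argues by a minimal counterexample~$G$ (edge-maximal in~$S$), and applies Lemma~\ref{lem1} to~$G$ itself. Cases~(S1) and~(S2) are trivial. In case~(S3) one obtains the very special $\zeta_S^*$-pair $(X,Y)$: each $y\in Y$ has degree four with two neighbours in~$X$ and two small neighbours. A contraction argument (Claim~\ref{cl6}) shows that \emph{every} vertex of~$Y$ must lie in every $\Sigma$-clique of size larger than $\tfrac32\beta+\gamma_S$; in particular all pairs in~$Y$ are $\Sigma$-adjacent, which forces $X^{y_1}\cap X^{y_2}\neq\varnothing$ for all $y_1,y_2\in Y$. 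Hence the graph $G_X$ on~$X$ recording common neighbours in~$Y$ is either a triangle or a star. Each case is then dispatched by a short count using the density inequality in~(S3) together with the bound $|Y^{\{x_1,x_2\}}|\le\tfrac12\beta-\gamma_S+9$ (Claim~\ref{cl8}). The leverage you are missing is precisely this: the structural lemma is used not to refine an Euler count on the clique, but---via minimality---to force the entire set~$Y$ into the clique, which pins down the witness structure to essentially two cases.
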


\noindent
The main steps in the proof of Theorem~\ref{th1} can be found in
Section~\ref{proof}. The proof relies on two technical lemmas; the proofs
of those can be found in Section~\ref{thelemmas}. After that we use one of
those lemmas to provide the relatively short proof of Theorem~\ref{th2} in
Section~\ref{proof-cl}. In Section~\ref{conclusion} we discuss some of the
aspects of our work and discuss open problems related to (list)
$\Sigma$-colouring of graphs. The final section provides some background
regarding the proof by Kahn~\cite{Kah00} of the asymptotical equality of
the fractional chromatic index and the list chromatic index of multigraphs.
A more general result, contained implicitly in Kahn's work, is of crucial
importance to our proof in this paper.

In the next two subsections, we discuss two special consequences of these
results. These special versions of Theorems~\ref{th1} and~\ref{th2} also
show that the term $\frac32\,\beta$ is best possible.

\medskip
But before presenting these applications, a remark is in order. In an
earlier version of this paper, we gave our results in terms of
$(A,B)$-colourings. For a graph~$G$ and vertex sets $A,B\subseteq V$ (not
necessarily disjoint), an \emph{$(A,B)$-colouring} of~$G$ is a colouring of
the vertices in~$B$ such that adjacent vertices, and vertices with a common
neighbour in~$A$, receive different colours.

There is an obvious way to translate an $(A,B)$-colouring problem into a
$\Sigma$-colouring problem: For $v\in A$ set $\Sigma(v)=N_G(v)\cap B$, and
for $v\notin A$ set $\Sigma(v)=\varnothing$. Note that after this
translation we are required to colour all vertices, not just those in~$B$.
But the vertices outside~$B$ do not appear in any~$\Sigma(v)$, hence
colouring them for a graph embeddable in a fixed surface requires at most a
constant number of colours.

On the other hand, it is easy to construct instances of $\Sigma$-colouring
problems for which there is no obvious translation to an $(A,B)$-colouring
problem. In that sense, we feel justified in considering $\Sigma$-colouring
as a more general concept.

\subsection{Colouring the Square of Graphs}\label{col-sq}

Recall that the \emph{square} of a graph~$G$, denoted~$G^2$, is the graph
with the same vertex set as~$G$ and with an edge between any two different
vertices that have distance at most two in~$G$. If~$G$ has maximum
degree~$\Delta$, then a vertex colouring of its square will need at least
$\Delta+1$ colours, and the greedy algorithm shows that it is always
possible to find a colouring of~$G^2$ with $\Delta^2+1$ colours. Cages of
diameter two, such as the 5-cycle, the Petersen graph and the
Hoffman-Singleton graph (see, e.g., \cite[page~84]{BoMu08}), show that
there exist graphs that in fact require $\Delta^2+1$ colours.

Regarding the chromatic number of the square of a planar graph,
Wegner~\cite{Weg77} posed the following conjecture (see also the book of
Jensen and Toft~\cite[Section 2.18]{JeTo95}), suggesting that for planar
graphs far less than $\Delta^2+1$ colours suffice.

\begin{conjecture}[Wegner~\cite{Weg77}]\label{con-weg}\mbox{}\\*
  For a planar graph~$G$ of maximum degree~$\Delta$,
  $\chi(G^2)\:\le\:\left\{
    \begin{array}{ll}
      7,&\text{if $\Delta =3$,}\\
      \Delta+5,&\text{if $4\le\Delta\le7$,}\\
      \bigl\lfloor\frac32\,\Delta\bigr\rfloor+1,&\text{if $\Delta\ge8$.}
    \end{array}\right.$
\end{conjecture}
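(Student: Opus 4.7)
Wegner's conjecture splits into three regimes of $\Delta$ with qualitatively different targets, so the plan is to treat each separately. The bound $\bigl\lfloor\tfrac32\Delta\bigr\rfloor+1$ for $\Delta\ge 8$ is tight against the clique constructions (take a triangle and attach $\lfloor\Delta/2\rfloor$ common neighbours to each of the three pairs of its vertices), so the goal in this regime is to match $\chi(G^2)$ to the clique bound that Theorem~\ref{th2} already gives up to an additive constant. The targets for $\Delta=3$ and $4\le\Delta\le 7$ exceed $\tfrac32\Delta$; these reflect sporadic low-degree anomalies rather than the generic extremal structure, and I would expect them to require more ad-hoc treatment.

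For the core regime $\Delta\ge 8$, I would attempt a discharging $+$ reducibility proof in the classical planar style. Assign initial charge $\deg(v)-4$ to vertices and $\ell(f)-4$ to faces, so that the total is $-8$ by Euler. Redistribute charge from high-degree vertices and long faces to low-degree vertices and short faces. Suppose $G$ is a minimum counterexample. Combined with the structural lemma underlying Theorem~\ref{th2} (which forces any locally dense region of $G$ to contain a $\Sigma$-clique of size $\le \tfrac32\Delta+O(1)$), the discharging should produce an unavoidable list of small configurations $H$. Each such~$H$ then has to be shown \emph{reducible}: given any proper $(\lfloor\tfrac32\Delta\rfloor+1)$-colouring of $(G-V(H))^2$, the vertices of $H$ can be coloured to extend it. In the easy cases this reduces to Hall-type matching arguments in the bipartite list-assignment graph, or to a bipartite list edge-colouring problem that Galvin's theorem handles. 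For $\Delta=3$ I would follow the line of existing proofs that $\chi(G^2)\le 7$ for subcubic planar graphs, choosing the list of reducible configurations (for instance, 2-vertices incident to short faces, adjacent triangles, etc.) so that a short discharging argument rules them out. For $4\le\Delta\le 7$ the margin $\Delta+5$ is too tight to admit a uniform treatment with the $\Delta\ge 8$ regime; here I would fall back on a computer-assisted search for reducible configurations, with the discharging rules re-calibrated for each value of $\Delta$ separately.

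The main obstacle, and the reason the conjecture remains open, is the reducibility step. Distance-two constraints propagate: removing a set $S$ does not free the colours used on $N_G(S)$, and since the conjectured bound $\bigl\lfloor\tfrac32\Delta\bigr\rfloor+1$ leaves essentially no slack over the clique number, even a configuration $H$ whose vertices have few neighbours in $G$ can easily fail to be reducible because its second neighbourhood is too colour-restricted. The probabilistic machinery producing the asymptotic $(\tfrac32+\eps)\Delta$ bound of Theorem~\ref{th1} cannot be tuned to yield an $O(1)$ additive error: Kahn's bound on $\ch'$ versus the fractional chromatic index inherently loses an $o(\Delta)$ term. So a proof would need either a genuinely constructive colouring procedure, or a strengthened structural theorem producing configurations substantially smaller and more restrictive than those currently yielded by the proof of Theorem~\ref{th2}; I would expect nearly all of the difficulty to concentrate in this last point.
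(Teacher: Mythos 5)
You have not produced a proof, and none should be expected: the statement you were given is Conjecture~\ref{con-weg}, Wegner's conjecture from 1977, which this paper explicitly treats as an open problem. The paper contains no proof of it to compare against; what it proves are asymptotic relaxations --- Theorem~\ref{th1} gives $\ch(G;\Sigma)\le\bigl(\tfrac32+\eps\bigr)\beta$ (hence $\chi(G^2)\le\tfrac32\Delta+o(\Delta)$ via Corollary~\ref{cor1}), and Theorem~\ref{th2} gives the clique bound $\tfrac32\Delta+O(1)$ --- together with the citation of Cohen and Van den Heuvel (Theorem~\ref{new-cl}) for the exact clique bound $\bigl\lfloor\tfrac32\Delta\bigr\rfloor+1$ when $\Delta\ge41$. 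Your text is accordingly a research programme rather than an argument: the two steps that would constitute the actual proof --- exhibiting an unavoidable set of configurations under your discharging scheme, and establishing reducibility of each configuration against distance-two constraints with only $+1$ of slack over the clique number --- are precisely the steps you leave open, and you say so yourself. So the proposal has a genuine gap in the strongest sense: it proves nothing beyond what is already known.

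Two smaller calibration points if you pursue the programme. First, your diagnosis of why the paper's machinery cannot close the gap is accurate and matches the authors' own discussion in Section~\ref{conclusion}: the loss is in Kahn's Theorem~\ref{kahn-main} (the $o(\Delta)$ error in passing from the fractional to the list chromatic index), not in the structural Lemma~\ref{lem1} or in Lemma~\ref{lem-mp}, which the authors note are already strong enough to support a $\tfrac32\beta+O(1)$ bound if Kahn's theorem could be strengthened to tolerate a constant additive deficit in the list sizes. Second, be aware that even the currently best known unconditional bounds for planar graphs ($\chi(G^2)\le\lceil\tfrac53\Delta\rceil+O(1)$, Molloy--Salavatipour) fall well short of $\tfrac32\Delta$, and the subcubic case $\Delta=3$ of Wegner's conjecture was itself a long-standing separate problem; your plan to ``follow the line of existing proofs'' there presupposes results that are not available in this paper and would need independent justification.
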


\noindent
Wegner also gave examples showing that these bounds would be tight. For
$\Delta\ge8$ even, these examples are sketched in Figure~\ref{fig:exwegner}.
\begin{figure}[htbp]
\begin{center}
\hspace{0.05cm}
\subfigure[\label{fig:exwegner}]{\unitlength0.45mm
    \begin{picture}(82,90)(-50,-41)
      \put(-50,0){\circle*{3}}\put(-44,0){\circle*{3}}
      \put(-38,0){\circle*{3}}\put(-33,0){\circle*{1.5}}
      \put(-29,0){\circle*{1.5}}\put(-25,0){\circle*{1.5}}
      \put(-21,0){\circle*{1.5}}\put(-16,0){\circle*{3}}
      \put(25,43.30){\circle*{3}}\put(22,38.11){\circle*{3}}
      \put(19,32.91){\circle*{3}}\put(16.5,28.58){\circle*{1.5}}
      \put(14.5,25.11){\circle*{1.5}}\put(12.5,21.65){\circle*{1.5}}
      \put(10.5,18.19){\circle*{1.5}}\put(8,13.86){\circle*{3}}
      \put(25,-43.30){\circle*{3}}\put(22,-38.11){\circle*{3}}
      \put(19,-32.91){\circle*{3}}\put(16.5,-28.58){\circle*{1.5}}
      \put(14.5,-25.11){\circle*{1.5}}\put(12.5,-21.65){\circle*{1.5}}
      \put(10.5,-18.19){\circle*{1.5}}\put(8,-13.86){\circle*{3}}
      \put(-16,27.71){\circle*{3.5}}\put(-16,-27.71){\circle*{3.5}}
      \put(32,0){\circle*{3.5}}
      \qbezier(-16,27.71)(-16,13.855)(-16,0)
      \qbezier(-16,27.71)(-27,13.855)(-38,0)
      \qbezier(-16,27.71)(-30,13.855)(-44,0)
      \qbezier(-16,27.71)(-33,13.855)(-50,0)
      \qbezier(-16,-27.71)(-16,-13.855)(-16,0)
      \qbezier(-16,-27.71)(-27,-13.855)(-38,0)
      \qbezier(-16,-27.71)(-30,-13.855)(-44,0)
      \qbezier(-16,-27.71)(-33,-13.855)(-50,0)
      \qbezier(-16,27.71)(-4,20.785)(8,13.86)
      \qbezier(-16,27.71)(1.5,29.95)(19,32.91)
      \qbezier(-16,27.71)(3,32.91)(22,38.11)
      \qbezier(-16,27.71)(4.5,35.505)(25,43.30)
      \qbezier(-16,-27.71)(-4,-20.785)(8,-13.86)
      \qbezier(-16,-27.71)(1.5,-29.95)(19,-32.91)
      \qbezier(-16,-27.71)(3,-32.91)(22,-38.11)
      \qbezier(-16,-27.71)(4.5,-35.505)(25,-43.30)
      \qbezier(32,0)(20,6.93)(8,13.86)
      \qbezier(32,0)(25.5,16.455)(19,32.91)
      \qbezier(32,0)(27,19.055)(22,38.11)
      \qbezier(32,0)(28.5,21.65)(25,43.30)
      \qbezier(32,0)(20,-6.93)(8,-13.86)
      \qbezier(32,0)(25.5,-16.455)(19,-32.91)
      \qbezier(32,0)(27,-19.055)(22,-38.11)
      \qbezier(32,0)(28.5,-21.65)(25,-43.30)
      \qbezier(-16,27.71)(0,0)(-16,-27.71)
      \put(-54,4){\makebox(0,0)[r]{$k-1$}}
      \put(-54,-4){\makebox(0,0)[r]{vertices}}
      \put(29,43.40){\makebox(0,0)[l]{$k$ vertices}}
      \put(29,-43.40){\makebox(0,0)[l]{$k$ vertices}}
      \put(38,0){\makebox(0,0){$z$}}
      \put(-19,32.91){\makebox(0,0){$x$}}
      \put(-19,-32.91){\makebox(0,0){$y$}}
    \end{picture}}
\hspace{4cm}
\subfigure[\label{fig:exboro}]{\unitlength0.45mm
    \begin{picture}(82,90)(-50,-41)
      \put(-50,0){\circle*{3}}\put(-44,0){\circle*{3}}
      \put(-38,0){\circle*{3}}\put(-33,0){\circle*{1.5}}
      \put(-29,0){\circle*{1.5}}\put(-25,0){\circle*{1.5}}
      \put(-21,0){\circle*{1.5}}\put(-16,0){\circle*{3}}
      \put(25,43.30){\circle*{3}}\put(22,38.11){\circle*{3}}
      \put(19,32.91){\circle*{3}}\put(16.5,28.58){\circle*{1.5}}
      \put(14.5,25.11){\circle*{1.5}}\put(12.5,21.65){\circle*{1.5}}
      \put(10.5,18.19){\circle*{1.5}}\put(8,13.86){\circle*{3}}
      \put(25,-43.30){\circle*{3}}\put(22,-38.11){\circle*{3}}
      \put(19,-32.91){\circle*{3}}\put(16.5,-28.58){\circle*{1.5}}
      \put(14.5,-25.11){\circle*{1.5}}\put(12.5,-21.65){\circle*{1.5}}
      \put(10.5,-18.19){\circle*{1.5}}\put(8,-13.86){\circle*{3}}
      \put(-50,0){\line(1,0){34}}
      \qbezier(25,43.30)(16.5,28.58)(8,13.86)
      \qbezier(25,-43.30)(16.5,-28.58)(8,-13.86)
      \put(25,-43.30){\line(0,1){86.6}}
      \put(8,-13.86){\line(0,1){27.72}}
      \qbezier(-50,0)(-12.5,21.65)(25,43.30)
      \qbezier(-50,0)(-12.5,-21.65)(25,-43.30)
      \qbezier(-16,0)(-4,6.93)(8,13.86)
      \qbezier(-16,0)(-4,-6.93)(8,-13.86)
      \put(-56,0){\makebox(0,0)[r]{$k$ vertices}}
      \put(29,43.40){\makebox(0,0)[l]{$k$ vertices}}
      \put(29,-43.40){\makebox(0,0)[l]{$k$ vertices}}
    \end{picture}}
  \caption{(a) A planar graph~$G$ with maximum degree $\Delta=2k$ and
    $\omega(G^2)=\chi(G^2)=3k+1=
    \bigl\lfloor\frac32\,\Delta\bigr\rfloor+1$.\newline
    (b)~A planar graph~$H$ with maximum face order $\Delta^*=2k$ and
    $\chi^*(H)=3k=\bigl\lfloor\frac32\,\Delta^*\bigr\rfloor$ (see
    Subsection~\ref{col-cycl}).
    \label{fig:exwegnerboro}}
\end{center}
\end{figure}
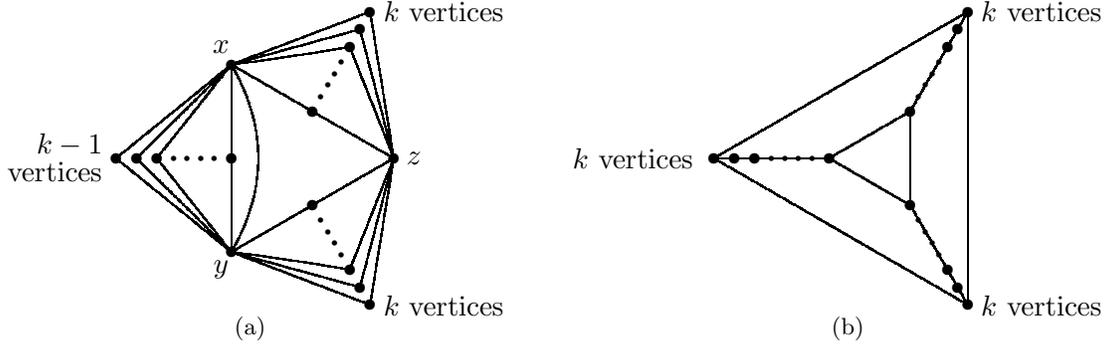
The graph in the picture has maximum degree~$2k$ and yet all the vertices
except~$z$ are pairwise adjacent in its square. Hence to colour these
$3k+1$ vertices, we need at least $3k+1=\frac32\,\Delta+1$ colours. Note
that the same arguments also show that the graph~$G$ in the picture has
$\omega(G^2)=\frac32\,\Delta+1$.

Kostochka and Woodall~\cite{KoWo01} conjectured that for the square of any
graph, the chromatic number equals the list chromatic number. This
conjecture and Wegner's one together imply the conjecture that for planar
graphs~$G$ with $\Delta\ge8$, we have
$\ch(G^2)\le\bigl\lfloor\frac32\,\Delta\bigr\rfloor+1$.

The first upper bound on~$\chi(G^2)$ for planar graphs in terms
of~$\Delta$, $\chi(G^2)\le8\Delta-22$, was implicit in the work of
Jonas~\cite{Jon93}. This bound was later improved by Wong~\cite{Won96} to
$\chi(G^2)\le3\Delta+5$ and then by Van den Heuvel and
McGuinness~\cite{vdHMG} to $\chi(G^2)\le2\Delta+25$. Better bounds were
then obtained for large values of~$\Delta$. It was shown by Agnarsson and
Halld\'orsson~\cite{AgHa00} that for $\Delta\ge750$ we have
$\chi(G^2)\le\lceil\frac95\,\Delta\rceil+1$; the same bound was proved for
$\Delta\ge47$ by Borodin \emph{et al.}~\cite{Bor+}. Finally, the best known
upper bound so far has been obtained by Molloy and
Salavatipour~\cite{MoSa02}: $\chi(G^2)\le\lceil\frac53\,\Delta\rceil+78$.
As mentioned in \cite{MoSa02}, the constant 78 can be reduced for
sufficiently large~$\Delta$. For example, it was improved to~24 when
$\Delta\ge241$.

\medskip
Since $\ch(G^2)=\ch(G; N_G)$ (i.e., $\Sigma(v)=N_G(v)$ for all $v\in V$),
as an immediate corollary of Theorem~\ref{th1} we obtain.

\begin{corollary}\label{cor1}\mbox{}\\*
  Let~$S$ be a fixed surface. Then the square of every graph~$G$ embeddable
  in~$S$ and of maximum degree~$\Delta$ has list chromatic number at most
  $\frac32\,\Delta+o(\Delta)$.
\end{corollary}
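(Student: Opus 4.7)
The plan is to derive Corollary~\ref{cor1} as a direct specialization of Theorem~\ref{th1}. The key observation, already highlighted in the introduction, is the identity $\ch(G^2)=\ch(G;N_G)$: a list colouring of $G^2$ must assign distinct colours to any two vertices at distance at most two in $G$, which is exactly the conjunction of the two requirements defining a list $\Sigma$-colouring when $\Sigma(v)=N_G(v)$ for every $v$ (adjacent vertices differ; and two vertices sharing a common neighbour $v$ appear together in $\Sigma(v)$ and so differ). With this choice of $\Sigma$ the width of the system is $\Delta(G;\Sigma)=\max_v|N_G(v)|=\Delta$.

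Given $\eps>0$ and the fixed surface $S$, I would let $\beta_{S,\eps}$ be the constant supplied by Theorem~\ref{th1}. For every graph $G$ embeddable in $S$ with $\Delta\ge\beta_{S,\eps}$, the theorem then delivers
\[
\ch(G^2)=\ch(G;N_G)\le\bigl(\tfrac32+\eps\bigr)\Delta.
\]
For $\Delta<\beta_{S,\eps}$, the trivial greedy bound $\ch(G^2)\le\Delta(G^2)+1\le\Delta^2+1$ is itself bounded by a constant depending only on $S$ and $\eps$. Since $\eps>0$ is arbitrary, combining the two ranges yields $\ch(G^2)-\tfrac32\Delta=o(\Delta)$, which is the desired conclusion.

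I do not expect any genuine obstacle here: all of the substance (the structural lemma for graphs embeddable in a surface and the reduction to list edge-colouring of multigraphs via Kahn's theorem) is absorbed into the proof of Theorem~\ref{th1}, so Corollary~\ref{cor1} reduces to pointing out that colouring the square of $G$ is exactly the instance of $\Sigma$-colouring in which each $\Sigma(v)$ is the full neighbourhood~$N_G(v)$.
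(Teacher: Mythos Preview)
Your argument is correct and matches the paper's approach exactly: the paper derives Corollary~\ref{cor1} in a single sentence by noting that $\ch(G^2)=\ch(G;N_G)$ and invoking Theorem~\ref{th1}. Your write-up simply spells out the routine details (choosing $\beta=\Delta$ when $\Delta\ge\beta_{S,\eps}$, and absorbing the finitely many small-$\Delta$ cases into the $o(\Delta)$ term), which the paper leaves implicit.
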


\noindent
In fact, the same asymptotic upper bound as in Corollary~\ref{cor1} can be
proved even for larger classes of graphs. Additionally, a stronger
conclusion on the colouring is possible. For the following result, we
assume that colours are integers, which allows us to talk about the
`distance' $|\alpha_1-\alpha_2|$ between two colours~$\alpha_1,\alpha_2$.

\begin{theorem}[Havet, Van den Heuvel, McDiarmid \& Reed
  \cite{HHMR}]\label{mt2}\mbox{}\\*
  Let~$k$ be a fixed positive integer. The square of every $K_{3,k}$-minor
  free graph~$G$ of maximum degree~$\Delta$ has list chromatic number (and
  hence clique number) at most $\frac32\,\Delta+o(\Delta)$. Moreover, given
  lists of this size, there is a proper colouring in which the colours on
  every pair of adjacent vertices of~$G$ differ by at least~$\Delta^{1/4}$.
\end{theorem}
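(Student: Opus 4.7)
The plan is to mirror the pipeline employed for Theorem~\ref{th1} in this paper: prove a structural lemma tailored to $K_{3,k}$-minor free graphs, reduce the problem of colouring~$G^2$ to a list edge-colouring problem on an auxiliary multigraph~$H$, and then invoke Kahn's theorem that $\ch'(H)$ is asymptotically equal to the fractional chromatic index~$\chi^*(H)$. Since graphs embeddable in a fixed surface are $K_{3,k}$-minor free for some~$k$ but not conversely, the structural lemma must be derived purely from the excluded-minor hypothesis, with minor-extraction arguments replacing the topological/Euler-formula arguments that would be available on a surface.

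The key structural claim to aim for is, roughly: in any $K_{3,k}$-minor free graph and any vertex~$v$ of high degree, all but at most $f(k)$ neighbours $u\in N(v)$ have small \emph{external degree}, i.e.\ few neighbours outside~$N_G[v]$. The proof would proceed by contradiction: if three neighbours of~$v$ each send many edges outside $N_G[v]$, then by pigeonhole and a disjointification argument one extracts three pairwise disjoint connected branch sets each linked to~$k$ common pairwise-disjoint targets, producing a $K_{3,k}$-minor. This lemma plays the role that the surface structural lemma of Section~\ref{thelemmas} plays in the proof of Theorem~\ref{th1}, and feeds into essentially the same auxiliary multigraph construction, yielding $\chi^*(H)\le\tfrac{3}{2}\,\Delta+o(\Delta)$; Kahn's theorem then promotes this to the desired list chromatic bound on~$G^2$. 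The main technical obstacle is tuning the minor-extraction so as to match the tight leading constant~$\tfrac{3}{2}$; earlier approaches (cited in Subsection~\ref{col-sq}) obtained weaker constants such as~$\tfrac{9}{5}$ or~$\tfrac{5}{3}$ precisely because their local structural results did not capture the extremal configurations of Figure~\ref{fig:exwegner}.

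For the $\Delta^{1/4}$ distance clause, I would enrich the auxiliary multigraph to encode the stronger constraint: a token of the form ``vertex~$v$ receives colour~$\alpha$'' now conflicts not only with $(u,\alpha)$ for $u\in N_{G^2}(v)$, but also with every $(u,\beta)$ where $u$ is a $G$-neighbour of~$v$ and $|\alpha-\beta|<\Delta^{1/4}$. A preliminary random shift of the palette of each list, combined with a Lov\'asz Local Lemma argument, guarantees that the lists are ``well spread out'', so that the enriched multigraph $H'$ still has fractional chromatic index $\tfrac{3}{2}\,\Delta+o(\Delta)$; applying Kahn's theorem to~$H'$ then produces, in a single pass, a colouring satisfying both the $G^2$-properness and the distance property, within the announced list size.
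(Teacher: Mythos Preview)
This paper does not prove Theorem~\ref{mt2}; it is quoted from~\cite{HHMR} and used only as a point of comparison. There is therefore no ``paper's own proof'' to match your attempt against. That said, Subsection~5.1 describes the architecture of the proof in~\cite{HHMR} in enough detail to assess your proposal.

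Your high-level pipeline (structural lemma $\to$ reduction to list edge-colouring of an auxiliary multigraph $\to$ Kahn-type result) is indeed the shape of the argument in~\cite{HHMR}, and your observation that minor-extraction must replace Euler-formula arguments is correct. But two substantive gaps separate your sketch from what that paper actually does.

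First, the structural lemma in~\cite{HHMR} is not the local ``bounded external degree of neighbours'' statement you propose; it is an analogue of Lemma~\ref{lem1} here, producing a pair $(X,Y)$ with a global density inequality. In~\cite{HHMR} the vertices of~$Y$ are only guaranteed degree at most~$\Delta^{1/4}$ (versus degree four here), and the error term in the density condition is~$\Delta^{9/10}\,|W|$ rather than a constant times~$|W|$. Your pigeonhole/disjointification sketch (``three neighbours with many outward edges yield a $K_{3,k}$-minor'') does not deliver anything of this strength, and in particular does not explain how the leading constant~$\tfrac32$ is reached.

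Second, and more seriously, you cannot invoke Theorem~\ref{kahn-main} directly. Because $Y$-vertices have degree up to~$\Delta^{1/4}$, each edge of the auxiliary multigraph carries up to~$O(\Delta^{1/2})$ additional conflicts with \emph{non-adjacent} edges. As Subsection~5.1 explicitly states, \cite{HHMR} must prove a genuine extension of Kahn's theorem that tolerates these distant conflicts; this extension is one of the main technical contributions of that paper, and your plan (``apply Kahn's theorem to~$H'$'') skips it entirely. The~$\Delta^{1/4}$ separation in the ``moreover'' clause is then obtained within that same extended framework --- the distant-conflict machinery already absorbs~$O(\Delta^{1/2})$ extra constraints per edge, so the additional~$O(\Delta^{1/4})$ constraints imposed by the separation requirement fit inside the existing slack --- rather than via a separate random-shift/Local-Lemma layer as you suggest.
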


\noindent
Note that planar graphs do not have a $K_{3,3}$-minor. In fact, for every
surface~$S$, there is a constant~$k$ such that no graph embeddable in~$S$
has $K_{3,k}$ as a minor. This shows that Theorem~\ref{mt2} is stronger
than our Corollary~\ref{cor1}. On the other hand, Theorem~\ref{mt2} gives a
weaker bound for the clique number than the one we obtain in
Corollary~\ref{cor1-cl} below.

Both Corollary~\ref{cor1} and Theorem~\ref{mt2} can be applied to
$K_4$-minor free graphs, since these graphs are planar and do not
have~$K_{3,3}$ as a minor. But the best possible bounds for this class are
actually known. Lih, Wang and Zhu~\cite{LWZ03} showed that the square of
$K_4$-minor free graphs with maximum degree~$\Delta$ has chromatic number
at most $\bigl\lfloor\frac32\,\Delta\bigr\rfloor+1$ if $\Delta\ge4$ and
$\Delta+3$ if $\Delta=2,3$. The same bounds, but then for the list
chromatic number of $K_4$-minor free graphs, were proved by Hetherington
and Woodall~\cite{HW07}.

\medskip
Regarding the clique number of the square of graphs, we get the following
corollary of Theorem~\ref{th2}.

\begin{corollary}\label{cor1-cl}\mbox{}\\*
  Let~$S$ be a fixed surface. Then the square of every graph~$G$ embeddable
  in~$S$ and of maximum degree~$\Delta$ has clique number at most
  $\frac32\,\Delta+O(1)$.
\end{corollary}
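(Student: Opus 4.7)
The plan is to deduce Corollary~\ref{cor1-cl} as an essentially immediate consequence of Theorem~\ref{th2}, by choosing the natural $\Sigma$-system that makes $G^2$-cliques coincide with $\Sigma$-cliques in $G$. Given a graph $G$ embeddable in $S$ with maximum degree $\Delta$, I would set $\Sigma(v) := N_G(v)$ for every $v \in V(G)$. Then $\Delta(G;\Sigma) = \Delta(G) = \Delta$.

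The key observation is that with this choice, a $\Sigma$-clique in $G$ is exactly a clique in $G^2$. Indeed, two vertices $u,w$ lie together in some $\Sigma(v) = N_G(v)$ precisely when they share a common neighbour in $G$, so the condition that any two vertices of $C$ are adjacent or appear together in some $\Sigma(v)$ is identical to the condition that any two vertices of $C$ are at distance at most two in $G$, i.e.\ adjacent in $G^2$. Hence $\omega(G^2) = \omega(G;\Sigma)$.

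From here the proof splits into two regimes depending on the size of $\Delta$. If $\Delta \ge \beta_S$ (the constant supplied by Theorem~\ref{th2}), then applying Theorem~\ref{th2} with $\beta = \Delta$ yields
\[
\omega(G^2) \;=\; \omega(G;\Sigma) \;\le\; \tfrac{3}{2}\,\Delta + \gamma_S.
\]
If instead $\Delta < \beta_S$, then the trivial bound $\omega(G^2) \le \Delta^2 + 1 < \beta_S^2 + 1$ already gives a constant depending only on $S$. Setting the additive constant to $\max\{\gamma_S,\,\beta_S^2 + 1\}$ (or $\max\{\gamma_S,\,\beta_S^2 + 1 - \tfrac{3}{2}\Delta\}$, absorbed into an $O(1)$ term that depends only on $S$) covers both cases and proves the corollary.

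There is no real obstacle in this argument: the entire content of the corollary lies in the structural input provided by Theorem~\ref{th2}, and the remaining step is the straightforward verification that the choice $\Sigma \equiv N_G$ translates clique number in $G^2$ into $\Sigma$-clique number in $G$ without loss. The only mild care needed is to handle small $\Delta$ separately, since Theorem~\ref{th2} is stated only for $\beta \ge \beta_S$.
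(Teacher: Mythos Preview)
Your proposal is correct and matches the paper's approach exactly: the paper also obtains Corollary~\ref{cor1-cl} as an immediate consequence of Theorem~\ref{th2} via the choice $\Sigma(v)=N_G(v)$, under which $\omega(G^2)=\omega(G;\Sigma)$ and $\Delta(G;\Sigma)=\Delta$. The only thing you add beyond the paper is the explicit handling of the small-$\Delta$ regime, which is harmless and indeed needed to make the $O(1)$ claim uniform.
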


\noindent
{}From the proof of Theorem~\ref{th2}, it can be deduced that the square of
a planar graph with maximum degree $\Delta\ge11616$ has clique number at
most $\frac32\,\Delta+76$.

Very recently, this was improved by the following result.

\begin{theorem}[Cohen \& Van den Heuvel
  \cite{CvdH}]\label{new-cl}\mbox{}\\*
  For a planar graph~$G$ of maximum degree $\Delta\ge41$, we have
  $\omega(G^2)\le\bigl\lfloor\frac32\,\Delta\bigr\rfloor+1$.
\end{theorem}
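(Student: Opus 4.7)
The plan is to analyze the structure of a clique $C$ in $G^2$ and show that when $|C|$ is large, $C$ must be covered in a very restricted way by the closed neighbourhoods of three vertices — essentially the configuration of Wegner's tight example in Figure~\ref{fig:exwegner}.

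The first step is to establish a structural dichotomy: if $C$ is a clique in $G^2$ with $|C| > \bigl\lfloor\tfrac32\,\Delta\bigr\rfloor+1$ and $\Delta\ge 41$, then either $C$ is already a clique in the graph $G$ (in which case planarity gives $|C|\le 4$, which is far below our target), or there exists a triangle $xyz$ in $G$ such that every $v\in C\setminus\{x,y,z\}$ is adjacent in $G$ to at least two of $x,y,z$. The second alternative is precisely the shape of Wegner's extremal example, so any structural analysis must pin it down.

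Granted the covering triangle, the counting argument is immediate. Every $v\in C\setminus\{x,y,z\}$ contributes at least $2$ to the sum $\sum_{u\in\{x,y,z\}}|N(u)\cap C|$, and since $xyz$ is a triangle, the three vertices $x,y,z$ themselves contribute $6$ to the same sum. Therefore
\[
2\,|C| \;\le\; \sum_{u\in\{x,y,z\}}|N(u)\cap C| \;\le\; 3\,\Delta,
\]
so $|C|\le\bigl\lfloor\tfrac32\,\Delta\bigr\rfloor$, which is actually one better than what the theorem claims and delivers the required contradiction.

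The heart of the proof, and the main obstacle, is producing the covering triangle $xyz$. The plan is to re-use the structural lemma from Section~\ref{thelemmas} that already yields $\omega(G^2)\le\tfrac32\,\Delta+O(1)$ in Theorem~\ref{th2}, and then sharpen its constants. Concretely, I would fix a common neighbour for each non-adjacent pair in $C$, form the planar "witness" graph on $C$ together with these witnesses, and carry out a discharging argument on its embedding to show that when $C$ is too large a small "core" of witnesses must collapse to a single triangle through which almost every pair is routed. The delicate part is controlling the exceptional vertices — those in $C\setminus\{x,y,z\}$ adjacent to at most one of $x,y,z$ — since each such vertex costs $1$ in the counting. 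Showing that for $\Delta\ge 41$ there can be at most one such exceptional vertex (which is what turns the $O(1)$ slack into the exact constant $1$ stated in the theorem) is where the numerical threshold $\Delta\ge 41$ will be forced by the discharging computation.
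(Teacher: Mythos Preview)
The paper does not prove Theorem~\ref{new-cl}. It is quoted as an external result of Cohen and Van den Heuvel~\cite{CvdH}, a paper listed in the bibliography as ``in preparation''; the present paper only proves the weaker Corollary~\ref{cor1-cl} (the $\tfrac32\,\Delta+O(1)$ bound) via Theorem~\ref{th2}. So there is no proof here to compare your proposal against.

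As for the proposal itself, it is a plan rather than a proof, and it is internally inconsistent. You first state a clean dichotomy in which \emph{every} vertex of $C\setminus\{x,y,z\}$ is adjacent to at least two of $x,y,z$, derive $|C|\le\lfloor\tfrac32\,\Delta\rfloor$ from it, and remark that this is ``one better'' than needed. But in the final paragraph you concede that there may be exceptional vertices and that the real task is to show there is at most one of them. These two versions of the structural claim are not the same, and only the second would actually yield the exact constant~$+1$ of the theorem; you should commit to it from the start. More importantly, the entire difficulty of the theorem lies in the step you describe only in outline: producing the covering triangle and bounding the number of exceptions. The reference to ``re-using the structural lemma from Section~\ref{thelemmas}'' and ``a discharging argument'' on a witness graph does not supply this; Lemma~\ref{lem1} as stated yields constants of order $10^2$, not~$1$, and nothing in your sketch indicates how the discharging would be tightened to force at most one exception precisely at $\Delta\ge41$. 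Until that structural step is actually carried out, the proposal is a heuristic for why the bound should hold, not a proof.
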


\noindent
Apart from the bound $\Delta\ge41$, this theorem is best possible, as is
shown by the same graphs that show that Wegner's Conjecture~\ref{con-weg}
is best possible for $\Delta\ge8$ (see also Figure~\ref{fig:exwegner}).

\subsection{Cyclic Colourings of Embedded Graphs }\label{col-cycl}

Given a surface~$S$ and a graph~$G$ embeddable in~$S$, we denote by~$G^S$
that graph with a prescribed embedding in~$S$. If the surface~$S$ is the
sphere, we talk about a \emph{plane graph~$G^P$}. The \emph{order} of a
face of~$G^S$ is the number of vertices in its boundary; the maximum order
of a face of~$G^S$ is denoted by $\Delta^*(G^S)$.

A \emph{cyclic colouring} of an embedded graph~$G^S$ is a vertex colouring
of~$G$ such that any two vertices in the boundary of the same face have
distinct colours. The minimum number of colours required in a cyclic
colouring of an embedded graph is called the \emph{cyclic chromatic
  number~$\chi^*(G^S)$}. This concept was introduced for plane graphs by
Ore and Plummer~\cite{OP69}, who also proved that for a plane graph~$G^P$,
we have $\chi^*(G^P)\le2\Delta^*$. Borodin~\cite{Bor84} (see also Jensen
and Toft~\cite[page~37]{JeTo95}) conjectured the following.

\begin{conjecture}[Borodin~\cite{Bor84}]\label{conb}\mbox{}\\*
  For a plane graph~$G^P$ of maximum face order~$\Delta^*$, we have
  $\chi^*(G^P)\le\bigl\lfloor\frac32\,\Delta^*\bigr\rfloor$.
\end{conjecture}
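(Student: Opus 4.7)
The plan is to reduce Borodin's conjecture to a $\Sigma$-colouring problem on an auxiliary plane graph, and then to attempt the exact bound by combining the structural approach of this paper with a discharging argument tailored to the cyclic setting.

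First, I would perform the standard face-insertion reduction. Given a plane graph $G^P$ of maximum face order $\Delta^*$, form a new plane graph $G'$ by placing, in the interior of each face $f$, an auxiliary vertex $v_f$ joined to every vertex on the boundary of $f$. Define a $\Sigma$-system on $G'$ by $\Sigma(v_f) = N_{G'}(v_f)$ for each auxiliary vertex, and $\Sigma(u) = \varnothing$ for every original vertex $u$. Then any $\Sigma$-colouring of $G'$ restricts to a cyclic colouring of $G^P$, and conversely any cyclic colouring of $G^P$ with more than $\Delta^*$ colours extends to a $\Sigma$-colouring of $G'$. Since $\lfloor \tfrac32 \Delta^* \rfloor > \Delta^*$ as soon as $\Delta^* \ge 3$, Borodin's conjecture is equivalent to the statement $\chi(G'; \Sigma) \le \lfloor \tfrac32 \Delta^* \rfloor$ for this class of $\Sigma$-systems, whose width is exactly $\Delta^*$.

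Second, I would attack this bound by induction on $|V(G^P)| + |E(G^P)|$ via a discharging argument adapted from the one underlying Theorems~\ref{th1} and~\ref{th2}. Assign initial charges $\deg(v) - 4$ to each vertex $v$ and $r(f) - 4$ to each face $f$ of $G^P$; by Euler's formula their sum equals $-8$. I would redistribute charge from large faces to small faces and to low-degree vertices incident to small faces, with rules calibrated so that a minimal counterexample to the exact bound $\lfloor \tfrac32 \Delta^* \rfloor$ is forced to contain a \emph{reducible configuration}: a local substructure which, upon deletion, admits a cyclic colouring of the remainder by the inductive hypothesis, and to which the colouring then extends because each deleted vertex sees fewer than $\lfloor \tfrac32 \Delta^* \rfloor$ forbidden colours in its cyclic neighbourhood.

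The main obstacle is precisely this reducibility step at the tight budget $\lfloor \tfrac32 \Delta^* \rfloor$, and it is the reason Borodin's conjecture remains open in general. The extremal graphs in Figure~\ref{fig:exboro} show that three faces of order $\Delta^*$ can share pairs of boundary vertices so as to force a cyclic clique of size $\lfloor \tfrac32 \Delta^* \rfloor$, leaving no slack whatsoever. The asymptotic method behind Theorem~\ref{th1} absorbs precisely this obstruction inside an $o(\Delta^*)$ error term; removing that slack requires a strengthening of the structural lemma of Section~\ref{thelemmas} in which the additive constants implicit in Conjecture~\ref{con1} are sharpened to zero for $\Sigma$-systems arising from face-insertions. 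Producing such a lemma — equivalently, identifying a family of reducible configurations in plane graphs of large maximum face order that survives right up against the extremal geometry of Figure~\ref{fig:exboro} — is in essence the content of Borodin's conjecture itself, and is the genuine bottleneck in this plan.
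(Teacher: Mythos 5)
You have not proved the statement, and in fact neither does the paper: Conjecture~\ref{conb} is Borodin's Conjecture, stated here as an \emph{open} problem. The paper only records that it is known for $\Delta^*=4$ (Borodin), that the best general bound is $\chi^*(G^P)\le\bigl\lceil\frac53\,\Delta^*\bigr\rceil$ (Sanders--Zhao), and it derives from Theorem~\ref{th1} only the asymptotic relaxation $\frac32\,\Delta^*+o(\Delta^*)$ (Corollary~\ref{cor2}), via exactly the face-vertex construction $G_F$ with $\Sigma_F(x_f)=N(x_f)$ that you describe in your first step. So your reduction is sound and coincides with the paper's own translation of cyclic colouring into $\Sigma$-colouring, but it only transports the problem; it does not solve it.

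The genuine gap is your second step, and you identify it yourself: the discharging scheme is never specified, no set of reducible configurations is exhibited, and no proof is given that a minimal counterexample at the exact budget $\bigl\lfloor\frac32\,\Delta^*\bigr\rfloor$ must contain one. The extremal graphs of Figure~\ref{fig:exboro} show there is no slack at all, so the "extend the colouring because each deleted vertex sees fewer than $\lfloor\frac32\,\Delta^*\rfloor$ forbidden colours" step cannot be carried out by counting alone, and sharpening Lemma~\ref{lem1} plus the Kahn-based machinery to an additive constant of zero is not something the paper's methods deliver (the paper itself explains that Theorem~\ref{kahn-main} caps the approach at $\frac32\,\beta+o(\beta)$, and even an improved Kahn-type theorem would only give $\frac32\,\beta+O(1)$). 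What you have written is a programme whose missing lemma is, as you concede, equivalent to the conjecture itself; as a proof of the statement it is therefore incomplete at its central point.
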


\noindent
The bound in this conjecture is best possible. Consider the plane graph
depicted in Figure~\ref{fig:exboro}: It has $3k$ vertices and has three
faces of order $\Delta^*=2k$. Since all pairs of vertices have a face they
are both incident with, we need
$3k=\bigl\lfloor\frac32\,\Delta^*\bigr\rfloor$ colours in a cyclic
colouring.

Borodin~\cite{Bor84} also proved Conjecture~\ref{conb} for $\Delta^*=4$.
For general values of~$\Delta^*$, the original bound
$\chi^*(G^P)\le2\Delta^*$ of Ore and Plummer~\cite{OP69} was improved by
Borodin \emph{et al.}~\cite{BorSZ} to
$\chi^*(G^P)\le\bigl\lfloor\frac95\,\Delta^*\bigr\rfloor$. The best known
upper bound in the general case is due to Sanders and Zhao~\cite{San01}:
$\chi^*(G^P)\le\bigl\lceil\frac53\,\Delta^*\bigr\rceil$.

Although Wegner's and Borodin's Conjectures seem to be closely related,
nobody has ever been able to bring to light a direct connection between
them. Most of the results approaching these conjectures use the same ideas,
but up until this point no one had proved a general theorem implying both a
result on the colouring of the square and a result on the cyclic colouring
of plane graphs (let alone on embedded graphs).

In order to show that our Theorem~\ref{th1} provides an asymptotically best
possible upper bound for the cyclic chromatic number for a graph~$G$ with
some fixed embedding~$G^S$, we need some extra notation. For each face~$f$
of~$G^S$, add a vertex~$x_f$. For any face~$f$ of~$G^S$ and any vertex~$v$
in the boundary of~$f$, add an edge between~$v$ and~$x_f$, and denote
by~$G_F$ the graph obtained from~$G^S$ by this construction. Note that the
vertex set of~$G_F$ consists of~$V(G)$ and all the new vertices~$x_f$,
for~$f$ a face of~$G^S$. Define a $\Sigma$-system~$\Sigma_F$ for~$G_F$ as
follows: For each vertex $v\in V(G)$, let $\Sigma_F(v)=\varnothing$. For
each vertex~$x_f$, let $\Sigma_F(x_f)$ be all the neighbours of~$x_f$.
Observe that a (list) $\Sigma_F$-colouring of~$G_F$ colours the vertices
of~$G$ in a way required for a cyclic (list) colouring of~$G^S$, and that
$\Delta(G_F;\Sigma_F)=\Delta^*(G^S)$.

(Note that in fact we have
$\chi^*(G^S)\le\chi(G_F,\Sigma_F)\le\chi^*(G^S)+1$. To get the second
inequality, start with a cyclic colouring of~$G^S$, add one extra colour,
and colour all the vertices~$x_f$ with that colour. Similar inequalities
hold for the list version.)

Using the upper bound on $\chi^*(G^S)$, we get the following corollary of
Theorem~\ref{th1}.

\begin{corollary}\label{cor2}\mbox{}\\*
  Let~$S$ be a fixed surface. Every embedding~$G^S$ of a graph~$G$ of
  maximum face order~$\Delta^*$ has cyclic list chromatic number at most
  $\frac32\,\Delta^*+o(\Delta^*)$.
\end{corollary}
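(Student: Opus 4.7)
The plan is to reduce the cyclic list colouring problem on $G^S$ directly to a list $\Sigma$-colouring problem on the auxiliary graph $G_F$ already introduced in the excerpt, and then invoke Theorem~\ref{th1}. The setup is tailor-made for this: we have $\Delta(G_F;\Sigma_F)=\Delta^*(G^S)$, and a $\Sigma_F$-colouring of $G_F$ restricted to $V(G)$ is exactly a cyclic colouring of $G^S$.

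First, I would check that $G_F$ is embeddable in the same surface~$S$. This is immediate from the construction: starting from the embedding $G^S$, inside each face $f$ we place the new vertex $x_f$ and draw the edges from $x_f$ to the vertices on the boundary of~$f$ without crossings (these edges lie inside~$f$). The resulting plane drawing of $G_F$ has no crossings, so $G_F$ embeds in~$S$.

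Next, given $\eps>0$, let $\beta_{S,\eps}$ be the constant provided by Theorem~\ref{th1}. Assume $\Delta^*(G^S)\ge\beta_{S,\eps}$ (smaller values are absorbed into the $o(\Delta^*)$ term). Set $\beta=\Delta^*(G^S)=\Delta(G_F;\Sigma_F)$. Theorem~\ref{th1} applied to $G_F$ with the $\Sigma$-system $\Sigma_F$ gives
\[
  \ch(G_F;\Sigma_F)\:\le\:\bigl(\tfrac32+\eps\bigr)\,\beta
  \:=\:\bigl(\tfrac32+\eps\bigr)\,\Delta^*(G^S).
\]
Finally, to obtain the statement about the cyclic list chromatic number, let $L$ be any list assignment on $V(G)$ with $|L(v)|\ge\bigl(\tfrac32+\eps\bigr)\,\Delta^*$ for every $v\in V(G)$. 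Extend $L$ to a list assignment $L'$ on $V(G_F)$ by assigning to each new vertex~$x_f$ an arbitrary list of size $\bigl\lceil(\tfrac32+\eps)\,\Delta^*\bigr\rceil$. The bound above yields a proper list $\Sigma_F$-colouring of $G_F$ from the lists~$L'$; its restriction to $V(G)$ is a proper cyclic colouring of $G^S$ in which each vertex~$v$ uses a colour from~$L(v)$, as required.

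There is no real obstacle here; the only thing to be careful about is verifying that $G_F$ stays embeddable in~$S$ and that list colouring (as opposed to ordinary colouring) transfers through the construction, both of which are straightforward. All the content of the corollary is already packaged into Theorem~\ref{th1} via the translation device~$G_F,\Sigma_F$ set up in the excerpt.
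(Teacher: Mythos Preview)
Your proof is correct and follows exactly the approach the paper sets up in the paragraph preceding Corollary~\ref{cor2}: translate cyclic list colouring of $G^S$ into list $\Sigma_F$-colouring of the auxiliary graph $G_F$, observe that $G_F$ embeds in $S$, and apply Theorem~\ref{th1}. The paper itself treats the corollary as an immediate consequence of this translation and does not spell out the list-transfer step, so your write-up is if anything more detailed than the original.
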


\noindent
For an embedded graph~$G^S$, the \emph{cyclic clique
  number~$\omega^*(G^S)$} is the maximum size of a set $C\subseteq V$ such
that every two vertices in~$C$ have some face they are both incident with.
Note that the plane graph depicted in Figure~\ref{fig:exboro} satisfies
$\omega^*(G^P)=3k=\bigl\lfloor\frac32\,\Delta^*\bigr\rfloor$. This shows
that the following corollary of Theorem~\ref{th2} is best possible, up to
the constant term.

\begin{corollary}\label{cor2-cl}\mbox{}\\*
  Let~$S$ be a fixed surface. Every embedded graph~$G^S$ of maximum face
  order~$\Delta^*$ has cyclic clique number at most
  $\frac32\,\Delta^*+O(1)$.
\end{corollary}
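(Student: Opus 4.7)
The plan is to deduce Corollary~\ref{cor2-cl} from Theorem~\ref{th2} via the auxiliary graph $G_F$ and $\Sigma$-system~$\Sigma_F$ already introduced in Subsection~\ref{col-cycl}. Recall that $G_F$ is obtained from~$G^S$ by inserting a new vertex~$x_f$ in each face~$f$ and joining it to all vertices on the boundary of~$f$, with $\Sigma_F(v)=\varnothing$ for $v\in V(G)$ and $\Sigma_F(x_f)=N_{G_F}(x_f)$.

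Three properties of this construction drive the argument. First, $G_F$ is itself embeddable in~$S$: within each face of the given embedding of~$G^S$, the new edges incident with~$x_f$ can be drawn without crossings, producing an embedding of $G_F$ in the same surface~$S$. Second, as already observed in Subsection~\ref{col-cycl}, $|\Sigma_F(x_f)|$ equals the number of distinct boundary vertices of~$f$, so $\Delta(G_F;\Sigma_F)=\Delta^*(G^S)=\Delta^*$. Third, and this is the essential link, every cyclic clique $C\subseteq V(G)$ of~$G^S$ is a $\Sigma_F$-clique of~$G_F$: by definition, any two distinct $u,v\in C$ share the boundary of some face~$f$, hence both $u$ and $v$ lie in $\Sigma_F(x_f)$, as required.

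Given these three points, the corollary is immediate from Theorem~\ref{th2}. Let $\beta_S$ and~$\gamma_S$ be the constants supplied by that theorem for the surface~$S$, and set $\beta=\max(\Delta^*,\beta_S)$. Then $\beta\ge\beta_S$ and $\Delta(G_F;\Sigma_F)\le\beta$, so Theorem~\ref{th2} applies to $G_F$ and~$\Sigma_F$ and yields that every $\Sigma_F$-clique of $G_F$---in particular the cyclic clique~$C$---has size at most $\tfrac32\,\beta+\gamma_S\le\tfrac32\,\Delta^*+\bigl(\tfrac32\,\beta_S+\gamma_S\bigr)$. This is exactly the desired bound with an $O(1)$ term depending only on~$S$.

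I do not expect a real obstacle here: the cyclic-to-$\Sigma$ dictionary has been spelled out in the preceding subsection, and the only mildly delicate point is the small-$\Delta^*$ regime, which is absorbed by exploiting the freedom in Theorem~\ref{th2} to choose $\beta$ strictly larger than $\Delta(G_F;\Sigma_F)$.
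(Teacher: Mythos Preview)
Your proposal is correct and follows exactly the route the paper intends: the corollary is obtained from Theorem~\ref{th2} via the auxiliary graph~$G_F$ with the $\Sigma$-system~$\Sigma_F$ set up in Subsection~\ref{col-cycl}, using that a cyclic clique in~$G^S$ is a $\Sigma_F$-clique in~$G_F$ and that $\Delta(G_F;\Sigma_F)=\Delta^*$. Your handling of the small-$\Delta^*$ range by padding~$\beta$ up to~$\beta_S$ is the right way to absorb the threshold in Theorem~\ref{th2}.
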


\noindent
For plane graphs, the proof of Theorem~\ref{th2} guarantees that a plane
graph~$G^P$ of maximum face order $\Delta^*\ge11616$ has cyclic clique
number at most $\frac32\,\Delta^*+76$.

\section{Proof of Theorem~\ref{th1}}\label{proof}

Our goal in this section is to show that for all surfaces~$S$ and any
$\eps>0$, if we take~$\beta$ large enough (depending on~$S$ and~$\eps$),
then for every graph $G=(V,E)$ embeddable in~$S$, every choice of
$\Sigma(v)\subseteq N_G(v)$ with $|\Sigma(v)|\le\beta$ for all $v\in V$,
and every assignment~$L(v)$ of at least $\bigl(\frac32+\eps\bigr)\,\beta$
colours to all $v\in V$, there is a list $\Sigma$-colouring of~$G$ where
each vertex receives a colour from its own list. In other words, we want an
assignment~$c(v)$ for each $v\in V$ such that:

\qite{$\bullet$}for all $v\in V$, we have $c(v)\in L(v)$;

\qite{$\bullet$}for all $u,v\in V$ with $uv\in E$, we have $c(u)\ne c(v)$;
and

\qite{$\bullet$}for all $u,v\in V$ for which there is a $t\in V$ with
$u,v\in\Sigma(t)$, we have $c(u)\ne c(v)$.

\medskip
Before we present the actual proofs, we recall some of the important
terminology, notation and facts concerning embeddings of graph in surfaces.

\subsection{Graphs in Surfaces}\label{gr-on-sur}

In this subsection, we give some background about graphs embedded in a
surface. For more details, the reader is referred to~\cite{MoTh}. Here, by
a \emph{surface} we mean a compact 2-dimensional surface without boundary.
An \emph{embedding} of a graph~$G$ in a surface~$S$ is a drawing of~$G$
on~$S$ so that all vertices are distinct, and every edge forms a simple arc
connecting in~$S$ the vertices it joins, so that the interior of every edge
is disjoint from other vertices and edges. A \emph{face} of this embedding
(or just a \emph{face of~$G$}, for short) is an arc-wise connected
component of the space obtained by removing the vertices and edges of~$G$
from the surface~$S$.

We say that an embedding is \emph{cellular} if every face is homeomorphic
to an open disc in~$\mathbb{R}^2$.

A surface can be orientable or non-orientable. The \emph{orientable
  surface~$\OS_h$ of genus~$h$} is obtained by adding $h\ge0$ `handles' to
the sphere; while the \emph{non-orientable surface~$\NOS_k$ of genus~$k$}
is formed by adding $k\ge1$ `cross-caps' to the sphere. The
\emph{genus~$\og(G)$} and \emph{non-orientable genus~$\nog(G)$} of a
graph~$G$ is the minimum~$h$ and the minimum~$k$, resp., such that~$G$ has
an embedding in~$\OS_h$, resp.\ in~$\NOS_k$.

The following result will allow us to suppose that a graph~$G$ with known
genus~$\og(G)$ or non-orientable genus~$\nog(G)$ can be assumed to be
embedded in a cellular way.

\begin{lemma}[{\cite[Propositions~3.4.1
    and~3.4.2]{MoTh}}]\label{lem-embedding}\mbox{}\\*
  \hspace*{-1.5\parindent}%
  \qitee{(i)}Every embedding of a connected graph~$G$ in~$\OS_{\og(G)}$ is
  cellular.

  \smallskip
  \qitee{(ii)}If~$G$ is a connected graph different from a tree, then there
  is an embedding of~$G$ in~$\NOS_{\nog(G)}$ that is cellular.
\end{lemma}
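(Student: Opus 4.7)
The plan is to prove both parts by contradiction, using classical surface surgery to contradict the minimality of the genus.

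For part~(i), suppose $G$ is embedded in $\OS_{\og(G)}$ with some face $f$ that is not homeomorphic to an open disc. The closure $\bar f$ of $f$ in $\OS_{\og(G)}$ is a compact (possibly singular) surface-with-boundary whose boundary walks run along edges of $G$. Since $f$ is not a disc, either $\bar f$ has positive (orientable or non-orientable) genus, or it has at least two boundary components, or both. In each case one can exhibit a simple closed curve $\gamma$ in the \emph{interior} of $f$ that is essential in $\OS_{\og(G)}$, i.e.\ does not bound a disc: inside a handle of $\bar f$ take a meridian, and if $\bar f$ has two boundary components take a curve in $f$ joining small neighbourhoods of each (closed up slightly). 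Having produced $\gamma$, I would cut $\OS_{\og(G)}$ along $\gamma$ and cap each resulting boundary circle with a disc. If $\gamma$ is non-separating this yields an embedding of $G$ in $\OS_{\og(G)-1}$; if $\gamma$ is separating, $G$ lies entirely on one side (since $\gamma$ sits inside a single face), and that side capped with a disc is a closed orientable surface of strictly smaller genus containing $G$. Either outcome contradicts the minimality of $\og(G)$.

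For part~(ii), the same general strategy applies but the surgery is subtler because a simple closed curve in a non-orientable surface can be either one-sided or two-sided, and cutting along a one-sided curve yields a single boundary circle rather than two. Start from any embedding of $G$ in $\NOS_{\nog(G)}$; if some face is not a disc, find an essential simple closed curve $\gamma$ in that face as before. If $\gamma$ is one-sided, cutting and capping reduces the non-orientable genus by~$1$; if $\gamma$ is two-sided and non-separating, the result is orientable of smaller Euler genus, in which case $G$ also embeds in some $\NOS_{k'}$ with $k'<\nog(G)$ by adding a cross-cap inside a face (here we use that $G$ is not a tree, so~$G$ has some face after the reduction in which to place this cross-cap); the separating case is handled as in~(i). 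In every case we contradict minimality of $\nog(G)$. The tree exclusion is necessary because a tree embedded in $\NOS_1$ has its unique face homeomorphic to an open Möbius band, hence non-cellular, yet $\nog$ cannot be reduced further within the non-orientable family.

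The main obstacle I anticipate is the topological lemma producing the essential simple closed curve $\gamma$ inside a non-disc face $f$, together with the bookkeeping needed to keep track of orientability after the surgery. Making this precise requires either appealing to the classification of compact surfaces with boundary applied to~$\bar f$, or a careful direct cut-and-paste; in the non-orientable setting one must additionally distinguish the one-sided and two-sided cases and verify that the modification can always be chosen to reduce the Euler genus without producing the sphere when $G$ is not a tree.
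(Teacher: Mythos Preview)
The paper does not prove this lemma at all; it is quoted from Mohar and Thomassen~\cite{MoTh} and used as a black box, so there is no proof in the paper to compare your attempt against.

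Your sketch for part~(i) is the standard surgery argument (essentially Youngs' theorem) and is correct in outline.

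For part~(ii), however, there is a genuine gap. As you have set it up, the argument would show that \emph{every} embedding of a connected non-tree $G$ in $\NOS_{\nog(G)}$ is cellular, since you start from an arbitrary embedding and aim for a contradiction. That stronger statement is false. Take $G=K_4$: it is planar, so $\nog(K_4)=1$, yet if you embed $K_4$ inside a small disc of the projective plane, one face is an open M\"obius band and the embedding is not cellular. Running your surgery on this example, the essential one-sided curve $\gamma$ in that face yields, after cutting and capping, $K_4$ on the sphere --- and there is no contradiction, because there is no $\NOS_0$ to descend to, while your ``add a cross-cap inside a face'' move only takes you back up to $\NOS_1$. (The same obstruction arises for any planar non-tree graph.) So the contradiction framework cannot work for~(ii); the statement really is only about existence, and one needs instead to \emph{produce} a cellular non-orientable embedding --- for instance via rotation systems with a twisted edge on a cycle of $G$, which is where the non-tree hypothesis actually enters --- and then argue separately that its genus is $\nog(G)$.
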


\noindent
The \emph{Euler characteristic~$\chi(S)$} of a surface~$S$ is $2-2h$ if
$S=\OS_h$, and $2-k$ if $S=\NOS_k$.

The basic result connecting all these concepts is \emph{Euler's Formula}:
If~$G$ is a graph with an embedding in~$S$, with vertex set~$V$, edge
set~$E$ and face set~$F$, then
\[|V|-|E|+|F|\:\ge\:\chi(S).\]
Moreover, if the embedding is cellular, then we have equality in Euler's
Formula.

Finally, if~$v$ is a vertex of a graph~$G$ embedded in a surface~$S$, then
that embedding imposes two circular orders of the edges incident with~$v$.
Since we assume graphs to be simple, this corresponds to two circular
orders of the neighbours of~$v$. If~$S$ is orientable, then we can
consistently choose one of the two clockwise orders for \emph{all}
vertices; if~$S$ is non-orientable, then such a choice is not possible. In
our proofs that follow, it is not important that we can choose a consistent
circular order; we only require that for each vertex~$v$, there is at least
one circular order of the neighbours around~$v$.

If $u_1,u_2$ are consecutive neighbours of~$v$ (with respect to the chosen
circular order), then there is a face that has the three vertices
$u_1,v,u_2$ in its boundary. That immediately gives the following
observation.

\begin{lemma}\label{lem0}\mbox{}\\*
  Let~$G$ be a graph embedded in a surface~$S$. Suppose $u_1,u_2$ are
  consecutive neighbours of~$v$ (with respect to the chosen circular
  order). Then the graph obtained by adding the edge~$u_1u_2$ (if it is not
  already present) is still embeddable in~$S$.
\end{lemma}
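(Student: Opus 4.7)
The statement is a purely topological observation, so my plan is to argue by drawing a single new arc in the given embedding without using any combinatorial machinery. If the edge $u_1u_2$ is already present there is nothing to prove, so assume it is not.

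The key step is to identify the face in which to draw the new edge. Fix the embedding of $G$ in $S$ and look locally at the vertex $v$: a small open disc around $v$ is cut by the edges incident with $v$ into open ``sectors'', one between each pair of cyclically consecutive edges. Because $u_1$ and $u_2$ are consecutive neighbours of $v$ in the chosen circular order at $v$, the sector between the initial segments of $vu_1$ and $vu_2$ belongs, locally, to a single face $f$ of the embedding. Since a face of an embedding is an arc-wise connected open region of $S\setminus G$, and since points of this sector are arbitrarily close to $u_1$ (along $vu_1$) and to $u_2$ (along $vu_2$), the vertices $u_1$ and $u_2$ both lie on the boundary of $f$.

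Now I construct the new edge. Pick a point $p_1$ on the open arc $vu_1$ very close to $v$ and a point $p_2$ on the open arc $vu_2$ very close to $v$, both on the side facing $f$. Inside the open sector at $v$, draw a short simple arc from $p_1$ to $p_2$; because the sector lies in $f$, this arc is disjoint from all vertices and edges of $G$. Concatenating the sub-arc of $vu_1$ from $u_1$ to $p_1$, the short arc from $p_1$ to $p_2$ through $f$, and the sub-arc of $vu_2$ from $p_2$ to $u_2$ gives a simple arc from $u_1$ to $u_2$. After a tiny perturbation pushing the sub-arcs of $vu_1$ and $vu_2$ into $f$ (except at the endpoints $u_1$ and $u_2$), the resulting arc meets the rest of $G$ only at $u_1$ and $u_2$. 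Adding it as the edge $u_1u_2$ produces an embedding of $G+u_1u_2$ in the same surface $S$.

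There is no real obstacle here; the only thing to be careful about is that the notion of ``sector'' between consecutive edges at $v$ is well-defined (which follows from the definition of the circular order induced by the embedding) and that this sector is contained in a single face (which is immediate locally, since the sector is a connected subset of $S$ minus the edges and vertices of $G$). Everything else is the routine ``short-cut across a corner'' construction that is standard in topological graph theory.
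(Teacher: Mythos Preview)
Your proof is correct and follows essentially the same approach as the paper: the paper simply observes, in the sentence immediately preceding the lemma, that consecutive neighbours $u_1,u_2$ of $v$ share a face with $v$ on its boundary, and declares the lemma an immediate consequence. Your argument is just a more explicit version of this, carrying out the routine ``draw the arc through that face'' construction in detail.
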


\noindent
That observation has the following corollary.

\begin{lemma}\label{lem-1}\mbox{}\\*
  Let~$G$ be a connected graph embedded in a surface~$S$. If~$G$ has more
  than three vertices and is edge-maximal with respect to being embeddable
  in~$S$, then every vertex has degree at least three.
\end{lemma}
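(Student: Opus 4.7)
My plan is to argue by contradiction: assume some vertex $v$ has $\deg(v) \le 2$, and exhibit a new edge that can be added to $G$ while preserving embeddability in $S$, contradicting edge-maximality. Since $G$ is connected with more than three vertices, $v$ cannot be isolated, so only $\deg(v) = 1$ and $\deg(v) = 2$ remain to be handled. In both cases the entire topological content will be packaged into a single application of Lemma~\ref{lem0}.

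Suppose $\deg(v) = 1$, with unique neighbour $u$. Connectedness of $G$ together with $|V(G)| \ge 4$ forces $\deg(u) \ge 2$, so in the circular order of neighbours around $u$ there is a neighbour $w$ of $u$ consecutive to $v$, necessarily with $w \neq u$ (no loops) and $w \neq v$ (distinct entries in the order). Lemma~\ref{lem0}, applied at $u$ to the consecutive pair $v, w$, allows the edge $vw$ to be added while keeping embeddability; and $vw \notin E(G)$ since $v$'s only neighbour is $u \neq w$. This contradicts edge-maximality.

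Now suppose $\deg(v) = 2$ with neighbours $u_1, u_2$. Applying Lemma~\ref{lem0} at $v$ to the (only) consecutive pair $u_1, u_2$ forces $u_1 u_2$ to already lie in $E(G)$, for otherwise we could add it. Next examine $\deg(u_1)$: if $\deg(u_1) \ge 3$, then in the circular order around $u_1$ the vertex $v$ has two flanking entries, and since $u_1$ has at least three distinct neighbours, at least one of these entries is some $w$ with $w \neq u_2$ (and automatically $w \neq v, u_1$). Lemma~\ref{lem0} at $u_1$ then yields the addable edge $vw$, which is new because $v$'s only neighbours are $u_1, u_2$: contradiction. The symmetric argument handles $\deg(u_2) \ge 3$. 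The only remaining possibility is $\deg(u_1) = \deg(u_2) = 2$, in which case $\{v, u_1, u_2\}$ is a connected component of $G$, contradicting connectedness together with $|V(G)| \ge 4$.

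The only real subtlety lies in the degree-$2$ case, where one must verify that at least one of the two neighbours of $v$ in the circular order around $u_1$ differs from the already-identified vertex $u_2$; this is exactly what $\deg(u_1) \ge 3$ combined with $G$ being simple (so distinct entries in the circular order represent distinct vertices) provides. Apart from this small-configuration bookkeeping, the argument is mechanical.
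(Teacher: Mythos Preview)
Your proof is correct and follows precisely the approach the paper intends: the paper states Lemma~\ref{lem-1} as a corollary of Lemma~\ref{lem0} without spelling out the details, and your argument does exactly that, handling the degree-one and degree-two cases by locating a consecutive pair around a suitable vertex and invoking Lemma~\ref{lem0} to produce an addable edge. The case analysis and the small-configuration bookkeeping (in particular, ruling out $\deg(u_1)=\deg(u_2)=2$ via the triangle component) are all handled correctly.
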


\subsection{The First Steps}\label{sec2.1}

For $P,Q\subseteq V$, the set of edges between~$P$ and~$Q$ is denoted by
$E(P,Q)$, and the number of edges between~$P$ and~$Q$ is denoted by
$e(P,Q)$ (edges with both ends in $P\cap Q$ are counted twice).

For a graph~$G$ with a $\Sigma$-system, and a vertex $v\in V$, we denote
by~$\sigma(v)$ the size of~$\Sigma(v)$, i.e., $\sigma(v)=|\Sigma(v)|$. A
\emph{$\Sigma$-neighbour} of a vertex~$v$ is a vertex $u\ne v$ such that
either~$u$ and~$v$ are adjacent, or there is some $t\in V$ with
$u,v\in\Sigma(t)$. Denote the number of $\Sigma$-neighbours of~$v$
by~$\ds(v)$. Note that we have
\[\ds(v)\:\le\:d(v)+\!\!
\sum_{t\text{ with }v\,\in\,\Sigma(t)}\!\!(\sigma(t)-1).\]

An important tool in our proof of Theorem~\ref{th1} is the following
technical structural result, Lemma~\ref{lem1}. Before stating this lemma,
we need a few extra definitions. For an integer~$\zeta$, a \emph{special
  $\zeta$-pair} is a pair $(X,Y)$ of disjoint subsets of vertices~$X$
and~$Y$ (possibly empty) with the following property:

\smallskip
\qittee{(i)}\emph{Every vertex in~$X$ has degree at least $\zeta+1$. Every
  vertex $y\in Y$ has degree four, is adjacent to exactly two vertices
  of~$X$, and the remaining neighbours of~$y$ have degree four as well.}

\smallskip
Given a special $\zeta$-pair $(X,Y)$, for any $y\in Y$, let~$X^y$ be the
set of two neighbours of $y$ in~$X$. For $W\subseteq X$, let~$Y^W$ be the
set of all vertices $y\in Y$ with $X^y\subseteq W$ (that is, the set of
vertices of~$Y$ having their two neighbours from~$X$ in~$W$).
  
A special $\zeta$-pair $(X,Y)$ is called \emph{very special} if in addition
the following condition holds:

\smallskip
\qittee{(ii)}\emph{For all pairs of vertices $y,z\in Y$, if~$y$ and~$z$ are
  adjacent or have a common neighbour $w\notin X$, then $X^y=X^z$.}

\smallskip
The general structure of a very special $\zeta$-pair is sketched in
Figure~\ref{fig:struct}.
\begin{figure}[htbp]
  \begin{center}
    \includegraphics[scale=0.8]{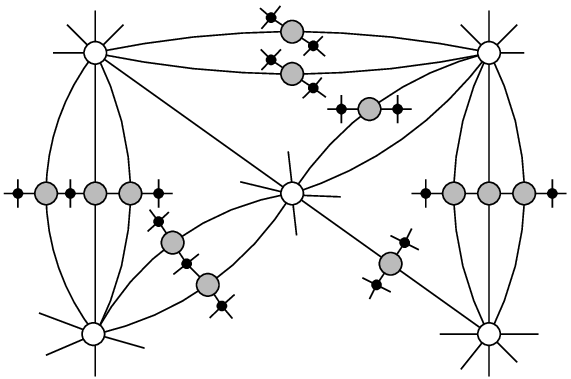}
    \caption{Sets $X$ (white vertices) and $Y$ (grey vertices) forming a
      very special 8-pair $(X,Y)$ in a graph. Neighbours of vertices of~$Y$
      not in~$X$ are depicted as small black vertices, and the remaining
      vertices are not depicted for the sake of clarity.\label{fig:struct}}
  \end{center}
\end{figure}

\medskip 
With these definitions, our structural lemma can be stated as follows:
 
\begin{lemma}\label{lem1}\mbox{}\\*
  Let~$S$ be a fixed surface, set $\zeta_S^*=132\,(3-\chi(S))$, and let~$G$
  be a graph embeddable in~$S$. If~$G$ is edge-maximal with respect to
  being embeddable in~$S$, then one of the following three properties
  holds.
    
  \smallskip
  \qitee{(S1)}Every vertex has degree at most~$\zeta_S^*$.

  \smallskip
  \qitee{(S2)}There is a vertex of degree at most five with at most one
  neighbour of degree more than~$\zeta_S^*$.

  \smallskip
  \qitee{(S3)}There exists a very special $\zeta_S^*$-pair $(X,Y)$ such
  that $X,Y$ are both non-empty and for all non-empty subsets
  $W\subseteq X$, the following inequality holds:
  \[e(W,V\setminus Y)\:\le\:e(W,Y\setminus Y^W)+\zeta_S^*\,|W|.\]
\end{lemma}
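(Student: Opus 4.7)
The plan is to argue by contradiction: assume that neither (S1) nor (S2) holds, and build a very special $\zeta_S^*$-pair witnessing (S3). Call a vertex \emph{big} if its degree exceeds $\zeta_S^*$ and \emph{small} otherwise. Failure of (S1) makes the set $X_0$ of big vertices non-empty; failure of (S2) forces every vertex of degree at most five to have at least two neighbours in $X_0$. Let $Y_0$ be the set of degree-4 vertices $y$ with exactly two neighbours in $X_0$ whose other two neighbours also have degree 4. Condition (ii) of very-specialness is not automatic, but any two offending vertices $y,z\in Y_0$ with $X^y\neq X^z$ that are adjacent or share a common neighbour outside $X_0$ can be reconciled by discarding one of them; iterating this pruning produces a candidate $(X_0,Y_0)$ that satisfies (i) and (ii).

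The core of the argument is a monotone refinement procedure. Start from $(X_0,Y_0)$; if some non-empty $W\subseteq X_0$ violates the edge-counting inequality of (S3), replace $X_0$ by $W$ and $Y_0$ by $Y_0\cap Y_0^W$, re-prune to restore (ii), and repeat. Each iteration strictly decreases $|X|$, so the process terminates with a pair $(X^*,Y^*)$ for which the inequality holds on every non-empty subset of $X^*$. The non-trivial task is to guarantee that $X^*\neq\emptyset$ and $Y^*\neq\emptyset$; all other clauses of (S3) are preserved by construction.

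I expect this non-emptiness to be the main obstacle, and I would establish it by a discharging argument on a cellular embedding of $G$ in $S$, existence of which is granted by Lemma~\ref{lem-embedding}, together with edge-maximality (hence near-triangulation, with $|E|\leq 3|V|-3\chi(S)$, and no vertex of degree below $3$ by Lemma~\ref{lem-1}). Assign each vertex $v$ initial charge $d(v)-6$; the total charge is at most $-6\chi(S)$ by Euler's formula. Small vertices of degree at most five carry negative charge and, by the failure of (S2), send their excess charge to their at least two big neighbours by a standard discharging rule; vertices of degree $6$ through $\zeta_S^*$ pass on a small fixed amount dictated by the rule, and big vertices absorb the surplus. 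If the refinement terminated with $X^*=\emptyset$, the last offending witness $W$ would be forced to send at least $\zeta_S^*|W|+1$ edges to $V\setminus Y$ while receiving only limited compensation from $Y\setminus Y^W$; summing redistributed charges over $W$ together with its first and second $\Sigma$-neighbourhoods would yield strictly positive total charge on a region whose global charge is bounded by $-6\chi(S)$, giving the desired contradiction. The constant $\zeta_S^*=132\,(3-\chi(S))$ is calibrated precisely to swallow the Euler defect, the losses from pruning for (ii), and the contribution of medium-degree vertices. The hardest bookkeeping step will be partitioning the edges leaving $W$ into the four classes with endpoints in $X\setminus W$, $Y^W$, $Y\setminus Y^W$, and small non-$Y$ vertices, and balancing them carefully against the redistributed charge, exploiting condition (ii) to avoid double counting where two $y\in Y^W$ share a neighbour outside $X$.
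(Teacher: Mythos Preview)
Your overall framework (big/small dichotomy, discharging, iterative refinement of $(X,Y)$) matches the paper, but two of your steps go in the wrong direction and would not close.

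\textbf{The refinement is backwards.} You propose: if $W\subseteq X$ violates the inequality, replace $X$ by $W$ and $Y$ by $Y^W$. But restricting makes the inequality \emph{harder}, not easier. For any $W'\subseteq W$ one has $V\setminus Y^W\supseteq V\setminus Y$ (so the left side grows) and $Y^W\setminus Y^{W'}\subseteq Y\setminus Y^{W'}$ (so the right side shrinks). In particular, taking $W'=W$ shows that $W$ itself still violates in the new pair, so your claim that ``each iteration strictly decreases $|X|$'' fails: the next violating set can be all of $W$, and the process stalls. The paper does the opposite: it \emph{removes} the offending set, setting $X_i=X_{i-1}\setminus Z_i$ and $Y_i=Y_{i-1}^{X_i}$. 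The point of this direction is the telescoping identity
\[
e(X_{i-1},V\setminus Y_{i-1})\;>\;e(X_i,V\setminus Y_i)+\zeta_S^*\,|Z_i|,
\]
which, summed over $i$ and combined with the discharging inequality $e(X_0,V\setminus Y_0)<\zeta_S^*\,|X_0|$, gives $|Z^*|<|X_0|$ and hence $X_k\ne\varnothing$. Your scheme has no analogue of this.

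\textbf{Condition (ii) is automatic; pruning is both unnecessary and harmful.} You assert that very-specialness must be enforced by discarding offending vertices from $Y_0$. In fact, if $Y_0$ is defined via the embedding (a degree-four neighbour $u$ of a big vertex $v$ belongs to $Y_0$ when the two \emph{consecutive} neighbours $u^-,u^+$ of $v$ around $u$ have degree four and $u^{\pm},u^{\pm\pm}\notin B$), then edge-maximality forces the fourth neighbour $w$ of $u$ to be big and forces $u^-,u^+$ to be adjacent to both $v$ and $w$; condition (ii) follows directly. Your embedding-free definition of $Y_0$ loses this, and your proposed pruning shrinks $Y_0$, which \emph{increases} $e(X_0,V\setminus Y_0)$ and can destroy the very discharging inequality you need.

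Finally, the discharging should not be deferred to the end as a contradiction device for $X^*=\varnothing$; it is used up front to prove $e(X_0,V\setminus Y_0)<\zeta_S^*\,|X_0|$ for the initial pair, and non-emptiness of $X_k$ and $Y_k$ then falls out of the (correct) refinement arithmetic.
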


\noindent
Very informally, Lemma~\ref{lem1} states that a graph that is maximally
embeddable in some fixed surface, either contains one of two fairly simple
configurations, or it contains a structure that internally satisfies a
specific density-type condition.

Structure~(S3) is at the heart of the above lemma. Although its description
might appear technical at first sight, it will be clear later that it is
the exact kind of density condition needed in the proofs of
Theorems~\ref{th1} and~\ref{th2}.
  
The proof of Lemma~\ref{lem1} can be found in Subsection~\ref{proof-lem1}.
Observe that the value we use for~$\zeta_S^*$ is probably far from best
possible. The important point, to our mind, is that it only depends on (the
Euler characteristic of) the surface~$S$.

\medskip
We continue with a description how to apply the lemma to prove
Theorem~\ref{th1}. Suppose the theorem is false. Then there is a
surface~$S$ and a real $\eps>0$ such that for every~$\beta_{S,\eps}$ we can
find $\beta\ge\beta_{S,\eps}$ and a graph~$G$, together with a
$\Sigma$-system of width at most $\beta$, such that
$\mathit{ch}(G;\Sigma)>\bigl(\frac32+\eps\bigr)\,\beta$. Set
$\zeta_S^*=132\,(3-\chi(S))$ and
$\beta_S^*=\tfrac23\,(\zeta_S^*)^2=11616\,(3-\chi(S))^2$. Note that, as
$\chi(S)\le2$, this means $\zeta_S^*\ge132$ and $\beta_S^*\ge11616$.

We start by assuming $\beta\ge\beta_S^*$; later (at the end of
Subsection~\ref{sec-mp}) we will add some further lower bounds for~$\beta$
that will depend on~$\eps$. With respect to this (yet to come) final choice
of~$\beta$, there exists a graph $G=(V,E)$ embeddable in $S$, together with
a $\Sigma$-system of width at most $\beta$ and a list-assignment~$L$ of at
least $\bigl(\frac32+\eps\bigr)\,\beta$ colours to each vertex $v\in V$,
such that $G$ has no $\Sigma$-colouring from these lists. Choose such a
graph~$G$ with the minimum number of vertices, and subject to this, with
the maximum number of edges.

Certainly we can assume that~$G$ is connected (otherwise one of the
components will be a smaller counterexample). Also, since each vertex has a
list of more than $\frac32\,\beta\ge17424$ colours, $G$ itself will have
more than 17424 vertices.

Next we can assume that~$G$ is edge-maximal with respect to being
embeddable in~$S$. Otherwise we can add a new edge~$uv$ to~$G$ so that the
resulting graph~$G_1$ is still embeddable in~$S$, and set
$\Sigma_1\equiv\Sigma$. It is clear that a list $\Sigma_1$-colouring
of~$G_1$ is also is a list $\Sigma$-colouring of~$G$.

Fix some embedding of~$G$ in~$S$. We continue with applying
Lemma~\ref{lem1} to~$G$.

\subsubsection{The structure from (S1) is present
  in~{\boldmath$G$}}\label{ss-S1}

This is the easiest case: If the degree of every vertex is at
most~$\zeta_S^*$, then the number of $\Sigma$-neighbours of any vertex is
at most $\zeta_S^*+\zeta_S^*\cdot(\zeta_S^*-1)=(\zeta_S^*)^2$. But the
number of colours in each list~$L(v)$ is at least
$\bigl(\frac32+\eps\bigr)\,\beta>\frac32\,\beta_S^*=(\zeta_S^*)^2$. So a
simple greedy colouring will do the job; contradicting that~$G$ is a
counterexample.

\subsubsection{The structure from (S2) is present
  in~{\boldmath$G$}}\label{ss-S2}

So there is a vertex~$v$ of degree at most five, and at most one of its
neighbours has degree more than~$\zeta_S^*$. Since $|V|\ge17424$, by
Lemma~\ref{lem-1}, $v$ has degree at least three. Hence it has a
neighbour~$u$ of degree at most~$\zeta_S^*$. Form the graph~$G_2$ by
contracting~$uv$ into a new vertex~$w$ (remove multiple edges if they
appear). Set $V_2=(V\setminus\{u,v\})\cup\{w\}$. Let
$\Sigma_2(w)=(\Sigma(u)\cup\Sigma(v))\setminus\{u,v\}$. For a vertex
$t\in V_2\setminus\{w\}$, if $\Sigma(t)$ contains~$u$, then set
$\Sigma_2(t)=(\Sigma(t)\setminus\{u,v\})\cup\{w\}$; otherwise, set
$\Sigma_2(t)=\Sigma(t)\setminus\{v\}$. Finally, give~$w$ the list of
colours $L(w)=L(u)$. Note that~$G_2$ is smaller than~$G$ and is still
embeddable in~$S$. Moreover, for every $t\in V_2\setminus\{w\}$, we have
$|\Sigma_2(t)|\le|\Sigma(t)|\le\beta$; while for~$w$ we have
$|\Sigma_2(w)|\le d_G(u)+d_G(v)\le5+\zeta_S^*\le\beta$.

So there exists a list $\Sigma_2$-colouring of~$G_2$. We define a colouring
of~$G$ as follows: Every vertex different from~$u$ and~$v$ keeps its colour
from the colouring of~$G_2$. We give~$u$ the colour given to~$w$ in~$G_2$.
Finally, we observe that for~$v$ we have
\[\ds(v)\:\le\:d(v)+\!\!\!
\sum_{t\text{ with }v\,\in\,\Sigma(t)}\!\!\!(\sigma(t)-1)\:\le\:
5+4\,(\zeta_S^*-1)+(\beta-1)\:=\:4\zeta_S^*+\beta\:\le\:\tfrac32\,\beta,\]
since $\beta\ge\tfrac23\,(\zeta_S^*)^2\ge8\zeta_S^*$. Since~$v$ has at
least $\bigl(\frac32+\eps)\,\beta$ colours in its list, there exists a free
colour for~$v$, i.e., a colour different from the colour of all the
vertices in conflict with~$v$. We colour~$v$ with such a free colour. By
the construction of~$G_2$ and~$\Sigma_2$, it is easy to verify that this
defines a list $\Sigma$-colouring of~$G$, contradicting the choice of~$G$
as a counterexample.

\subsubsection{The structure from (S3) is present
  in~{\boldmath$G$}}\label{ss-S3}

Let~$X$ and~$Y$ be two non-empty disjoint subsets of~$V$ such that the pair
$(X,Y)$ is a very special $\zeta_S^*$-pair satisfying the condition
of~(S3). We can remove from~$X$ any vertex not adjacent to any vertex
in~$Y$.

\begin{claim}\label{cl2}
  For all $y\in Y$, we have that if $X^y=\{x_1,x_2\}$, then
  $y\in\Sigma(x_1)\cap\Sigma(x_2)$.
\end{claim}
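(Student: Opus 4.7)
The plan is a contradiction argument using the minimality of $G$. Suppose the claim fails; without loss of generality $y \notin \Sigma(x_1)$. Let $a, b$ denote the two neighbours of $y$ outside $X$; condition~(i) in the definition of a very special $\zeta_S^*$-pair forces $d(y) = d(a) = d(b) = 4$. I would delete $y$ and work with $G' = G - y$, equipped with the $\Sigma$-system $\Sigma'(t) = \Sigma(t) \setminus \{y\}$ for all $t \ne y$ and the restriction of the list assignment $L$. Since $G'$ is embeddable in $S$, has fewer vertices than $G$, and $\Delta(G'; \Sigma') \le \beta$, the minimality of $G$ yields a list $\Sigma'$-colouring $c$ of $G'$.

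It then suffices to extend $c$ to $y$ with some colour in $L(y)$ that avoids all colours on $\Sigma$-neighbours of $y$ in $G$. To count these, adjacency contributes at most $d(y) = 4$ forbidden colours. Any further forbidden colour comes from a vertex $w \ne y$ with $y, w \in \Sigma(t)$ for some $t$; since $\Sigma(t) \subseteq N(t)$, this forces $t \in N(y) = \{x_1, x_2, a, b\}$ with $y \in \Sigma(t)$. The assumption $y \notin \Sigma(x_1)$ eliminates $x_1$, so
\[d^\Sigma(y) \:\le\: 4 + (|\Sigma(x_2)| - 1) + (|\Sigma(a)| - 1) + (|\Sigma(b)| - 1) \:\le\: 4 + (\beta - 1) + 3 + 3 \:=\: \beta + 9,\]
using $|\Sigma(a)|, |\Sigma(b)| \le d(a) = d(b) = 4$. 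Since $|L(y)| \ge \bigl(\tfrac{3}{2} + \eps\bigr)\beta > \beta + 9$ whenever $\beta \ge \beta_S^* \ge 11616$, a free colour for $y$ exists in its list. Extending $c$ produces a list $\Sigma$-colouring of $G$, contradicting the choice of $G$. Swapping the roles of $x_1$ and $x_2$ gives $y \in \Sigma(x_2)$ as well.

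The crux of the argument, and where the technical setup of a very special pair earns its keep, is the degree-four constraint on the non-$X$ neighbours $a, b$. Without it, $\Sigma(a)$ and $\Sigma(b)$ could each contribute up to $\beta - 1$ forbidden colours, inflating $d^\Sigma(y)$ to roughly $3\beta$ and making the greedy extension useless. With this bound in place, the deletion step is routine, and the symmetric conclusion $y \in \Sigma(x_2)$ follows with no additional effort.
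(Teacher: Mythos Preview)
Your argument has a genuine gap in the reduction step. When you delete $y$ to form $G' = G - y$ with $\Sigma'(t) = \Sigma(t) \setminus \{y\}$, you discard the set $\Sigma(y)$ entirely. A list $\Sigma'$-colouring $c$ of $G'$ is therefore under no obligation to give distinct colours to two vertices $u, w \in \Sigma(y)$: nothing in $G'$ forces $c(u) \ne c(w)$ unless $u$ and $w$ happen to be adjacent or lie together in some other $\Sigma'(t)$. Extending $c$ to $y$ by picking a free colour handles all constraints \emph{involving} $y$, but it cannot repair a clash between two vertices that were required to differ only because both sat in $\Sigma(y)$. Hence the sentence ``Extending $c$ produces a list $\Sigma$-colouring of $G$'' is not justified as written.

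The paper sidesteps this by \emph{contracting} an edge $uy$ (with $u \notin X^y$ a degree-four neighbour of $y$) rather than deleting $y$, exactly as in the treatment of structure~(S2). Contracting into a new vertex $w$ with $\Sigma_2(w) = (\Sigma(u) \cup \Sigma(y)) \setminus \{u, y\}$ keeps the $\Sigma(y)$-constraints alive inside $\Sigma_2(w)$: the $\Sigma_2$-colouring of $G_2$ already separates all pairs from $\Sigma(y)\setminus\{u\}$, and $u$ inherits the colour of $w$, which differs from every such vertex by adjacency in $G_2$. Your bound $d^\Sigma(y) \le \beta + 9$ is correct and coincides with the paper's; only the reduction needs to be replaced by contraction.
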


\begin{proof}
  Suppose we have $y\notin\Sigma(x_1)$. Since $(X,Y)$ is special, $y$ has
  degree four, and it has a neighbour~$u$ not in~$X^y$ of degree four. We
  also have $\ds(y)\le4+2\cdot(4-1)+(\sigma(x_2)-1)\le9+\beta$. By
  contracting the edge~$uy$, we can argue similarly to
  Subsection~\ref{ss-S2} (with~$y$ now playing the role of~$v$) to obtain a
  contradiction.
\end{proof}

\noindent
In the remainder of this subsection we describe how to reduce this case to
a list edge-colouring problem. More precisely, we first define a
modification of the original graph~$G$ into a smaller graph~$G_0$ with
vertex set $V\setminus Y$, inheriting a $\Sigma$-system from that of~$G$,
so that the minimality of~$G$ as a counterexample implies that~$G_0$ admits
a $\Sigma$-colouring. This colouring then provides a partial
$\Sigma$-colouring of~$G$, giving a colour to every vertex outside~$Y$. In
order to extend this partial colouring to the whole graph, we define a
multigraph whose edges are indexed by the vertices in~$Y$, so that an
edge-colouring of that multigraph is exactly the extension of the
$\Sigma$-colouring to~$Y$ we are looking for. In the next subsection we
then describe how Kahn's approach to prove that the list chromatic index is
asymptotically equal to the fractional chromatic index, can be used to
conclude the proof of Theorem~\ref{th1}.

To define $G_0$, we divide the vertices of~$Y$ into three parts according
to their number of neighbours outside $X \cup Y$. Let~$Y'$ be the set of
vertices in~$Y$ with no neighbour outside $X\cup Y$. Consider first the
graph $G[V\setminus Y']$ induced on the set of vertices outside~$Y'$. For
each vertex $y\in Y'$, add an edge between its two neighbours
$\{x_1,x_2\}=X^y$, if those are not already joined by an edge, and
remove~$y$ from $\Sigma(x_1)$ and $\Sigma(x_2)$. Also, add~$x_1$ to
$\Sigma(x_2)$ and~$x_2$ to $\Sigma(x_1)$. Note that after these changes,
$\Sigma(x_1)$ and $\Sigma(x_2)$ cannot be larger than before (since, by
Claim~\ref{cl2}, $y\in\Sigma(x_i)$ for $i=1,2$).

For any vertex $y\in Y\setminus Y'$ with a unique neighbour~$u$ outside
$X\cup Y$, contract the edge~$yu$ (remove multiple edges if they appear),
and, by an abuse of the notation, call the new vertex~$u$ again. For the
two vertices~$x_1$ and~$x_2$ in~$X^y$, remove~$y$ from $\Sigma(x_1)$ and
$\Sigma(x_2)$. For the vertex~$u$ itself, let $\Sigma(u)$ be equal to the
set of all its neighbours.

And, finally, for any vertex $y\in Y\setminus Y'$ with exactly two
neighbours~$u$ and~$u'$ outside $X\cup Y$, contract the edge~$yu$ (remove
multiple edges if they appear), and, by an abuse of the notation, call the
new vertex~$u$ again. For the two vertices~$x_1$ and~$x_2$ in~$X^y$,
remove~$y$ from $\Sigma(x_1)$ and $\Sigma(x_2)$. Add~$u$ to $\Sigma(u')$
and remove $y$ from $\Sigma(u')$ (if it was in this set). For the
vertex~$u$ itself, let $\Sigma(u)$ be equal to the set of all its
neighbours. Note that~$u'$ has degree at most four in $G$, hence certainly
$|\Sigma(u')|\le\beta$.

The graph obtained after the modifications described above is denoted
by~$G_0$, and the resulting sets by $\Sigma_0(v)$, $v\in V(G_0)$. Note
that, by our abuse of the notation, $G_0$ has the vertex set
$V_0=V\setminus Y$. Next we observe that a vertex~$u$ of~$G$ outside $X
\cup Y$ that was adjacent to a vertex $y\in Y$ (and hence may have been
involved in one or more contractions) has degree four in~$G$. Since
vertices in~$Y$ have degree four as well, each contraction increases the
degree by at most two. So in~$G_0$, such a vertex~$u$ has degree at most
twelve, hence we certainly have $|\Sigma_0(u)|\le\beta$. By the
construction above, we saw that for every other vertex $v\in V_0$, we also
have $|\Sigma_0(v)|\le|\Sigma(v)|\le\beta$ or $|\Sigma_0(v)|\le
d_{G_0}(v)\le\beta$.

By its construction, $G_0$ is embeddable in~$S$. Also by construction, and
the remarks above, it is easy to verify the following statement.

\begin{claim}\label{cla:ngbeta}
  If $u,v\in V_0$ are adjacent in~$G$, then~$u,v$ are adjacent in~$G_0$. If
  $u,v\in V_0$ and there is a $t\in V$ with $u,v\in\Sigma(t)$, then~$u,v$
  are either adjacent in~$G_0$, or there is a $t_0\in V_0$ with
  $u,v\in\Sigma(t_0)$.
\end{claim}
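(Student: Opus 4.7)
The plan is to verify both statements by unpacking, operation by operation, the three modifications that produce $G_0$ from $G$: (a)~deleting the vertices of $Y'$ and inserting the edge $x_1x_2$ for each $y\in Y'$ with $X^y=\{x_1,x_2\}$; (b)~contracting an edge $yp$ for every $y\in Y\setminus Y'$, where $p$ is the chosen non-$(X\cup Y)$-neighbour of $y$ (and where a second such neighbour $p'$ may be present); and (c)~performing the accompanying updates of the $\Sigma$-sets.

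For the first statement, it suffices to observe that operation~(a) only deletes edges incident to $Y'\subseteq Y$, that operation~(b) merely rearranges the edges around $y$ and $p$ before collapsing multi-edges (so any edge $uv$ with $u,v\in V_0$ survives, possibly as the representative of a multi-edge), and that operation~(c) only adds edges. None of these can destroy an edge with both endpoints in $V_0=V\setminus Y$.

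For the second statement, I would split on whether $t\in V_0$ or $t\in Y$. If $t\in V_0$, then $t$ survives in $G_0$ and the relevant $\Sigma$-update either leaves $\Sigma(t)$ alone, or removes only vertices of $Y$ from it and possibly adds $V_0$-vertices (the cases $t\in X$, or $t=p'$ in the third step of the construction), or resets $\Sigma_0(t)$ to $N_{G_0}(t)$ (when $t$ absorbs some $y\in Y$ through a contraction). In the first two subcases no element of $V_0\cap\Sigma(t)$ is removed; in the third the first part of the claim gives $u,v\in N_{G_0}(t)=\Sigma_0(t)$; so in every situation $t_0=t$ works. If instead $t\in Y$, condition~(i) of the definition of a very special $\zeta$-pair yields $N_G(t)=\{x_1,x_2,w_1,w_2\}$ with $\{x_1,x_2\}=X^t$ and $w_1,w_2$ of degree four, so $\{u,v\}\subseteq\{x_1,x_2\}\cup(\{w_1,w_2\}\cap V_0)$ has at most four candidates. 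When $t\in Y'$ this forces $\{u,v\}=\{x_1,x_2\}$, and operation~(a) inserts the required edge. When $t\in Y\setminus Y'$, operation~(b) makes $p$ adjacent in $G_0$ to each element of $N_G(t)\cap V_0$ distinct from $p$, so any pair $\{u,v\}$ containing $p$ is already an edge of $G_0$; and for any pair $\{u,v\}$ not containing $p$, both elements lie in $N_{G_0}(p)=\Sigma_0(p)$, so $t_0=p$ provides the required $\Sigma$-witness.

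The only non-trivial part of the argument is thus the case $t\in Y$, in which one has to match each possible pair $\{u,v\}\subseteq V_0\cap\Sigma(t)$ either with an edge introduced by~(a) or by the contraction of~(b), or with the newly defined $\Sigma_0(p)=N_{G_0}(p)$. The rest of the verification is routine book-keeping, and I expect no real difficulty beyond carefully enumerating the cases of the construction.
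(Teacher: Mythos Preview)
Your proposal is correct and is precisely the detailed case-by-case verification that the paper leaves implicit (the paper simply says the claim is ``easy to verify'' from the construction). One small wording issue: in your treatment of the first statement you write that ``operation~(c) only adds edges'', but operation~(c) in your decomposition is the $\Sigma$-update, which does not touch edges at all; the point you want is simply that~(c) is irrelevant for adjacency, which is what your conclusion uses anyway. Otherwise the argument is sound and matches the intended reasoning.
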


\noindent
For each vertex $v\in V_0$ set $L_0(v)=L(v)$. Since $Y\ne\varnothing$, by
the minimality of~$G$, the graph~$G_0$ admits a list
$\Sigma_0$-colouring~$c_0$ with respect to the list assignment~$L_0$.

We now transform this colouring into a partial list $\Sigma$-colouring
of~$G$ with respect to the original list assignment~$L$, by just setting
$c(v)=c_0(v)$ for each vertex $v\in V_0=V\setminus Y$. By
Claim~\ref{cla:ngbeta}, this is indeed a good partial $\Sigma$-colouring of
all the vertices of $V\setminus Y$ in~$G$. The difficult part of the proof
is to show that~$c$ can be extended to~$Y$.

By assumption, at the beginning every vertex in~$Y$ has a list of at least
$\bigl(\frac32+\eps\bigr)\,\beta$ available colours. For each vertex~$y$
in~$Y$, let us remove from~$L(y)$ the colours which are forbidden for~$y$
according to the partial $\Sigma$-colouring~$c$ of~$G$. In the worst case,
these forbidden colours are exactly the colours of the vertices of
$V\setminus Y$ at distance at most two from~$y$.

\medskip
Let us define the multigraph~$H$ as follows: $H$ has vertex set~$X$. And
for each vertex $y\in Y$ we add an edge~$e_y$ between the two neighbours
of~$y$ in~$X$ (in other words, between the two vertices in~$X^y$). Note
that this process may produce multiple edges. We associate a list~$L(e_y)$
to~$e_y$ in~$H$ by taking the list of~$y$ obtained after removing the set
of forbidden colours for~$y$ from the original list~$L(y)$.

In what follows, following the usual terminology for multigraphs, we denote
by~$d_H(x)$ the degree of the vertex~$x$ in the multigraph~$H$, i.e., the
number of edges incident with~$x$ in $H$. By Claim~\ref{cl2}, we have
$N_G(x)\cap Y\subseteq\Sigma(x)$ for every $x\in X$, which guarantees
$d_H(x)\le\sigma(x)$.

We now prove the following lemma.

\begin{lemma} \label{lem:ext}\mbox{}\\*
  A list edge-colouring for~$H$, with the list assignment~$L$ defined as
  above, provides an extension of~$c$ to a list $\Sigma$-colouring of~$G$
  by giving to each vertex $y\in Y$ the colour of the edge~$e_y$ in~$H$.
\end{lemma}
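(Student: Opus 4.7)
Given a proper list edge-colouring $\chi$ of $H$ with $\chi(e_y)\in L(e_y)$, I propose to define $c(y):=\chi(e_y)$ for each $y\in Y$, keeping $c(v)=c_0(v)$ for $v\in V\setminus Y$. The task is to verify that the resulting colouring $c$ is a proper list $\Sigma$-colouring of $G$ with respect to the original lists $L$. The list constraint $c(y)\in L(y)$ is immediate, since $L(e_y)$ was obtained from $L(y)$ only by deletions. There are then two types of $\Sigma$-conflicts to rule out for the vertices of $Y$: conflicts of a vertex $y\in Y$ with a vertex $u\in V\setminus Y$, and conflicts between two distinct vertices $y,y'\in Y$.

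The first type is handled by the very definition of $L(e_y)$: every colour appearing on a vertex of $V\setminus Y$ that is a $\Sigma$-neighbour of $y$ (in $G$) has already been removed from $L(y)$ before forming $L(e_y)$, so $\chi(e_y)$ differs from all such colours.

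The main step is the second type, and this is where property~(ii) of a very special pair is crucial. I would split the analysis into the cases corresponding to the ways two vertices $y,y'\in Y$ can be $\Sigma$-neighbours in $G$:
\begin{itemize}
\item If $y,y'$ are adjacent in $G$, then by property~(ii), $X^y=X^{y'}$, so $e_y$ and $e_{y'}$ are parallel edges in $H$ and the proper edge-colouring assigns them different colours.
\item If there exists $t\in V$ with $y,y'\in\Sigma(t)\subseteq N_G(t)$, I treat three subcases according to where $t$ lies. If $t\in X$, then $t\in X^y\cap X^{y'}$, so $e_y$ and $e_{y'}$ share the endpoint $t$ in $H$. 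If $t\in V\setminus(X\cup Y)$, then $t$ is a common neighbour of $y,y'$ outside $X$, and property~(ii) again yields $X^y=X^{y'}$, giving parallel edges. If $t\in Y$, then $y$ (resp.\ $y'$) is adjacent to $t$, and applying property~(ii) twice gives $X^y=X^t=X^{y'}$, again producing parallel edges.
\end{itemize}
In every subcase, $e_y$ and $e_{y'}$ are adjacent (or parallel) in $H$, so $\chi(e_y)\ne\chi(e_{y'})$, whence $c(y)\ne c(y')$.

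Finally, since $c$ restricted to $V_0=V\setminus Y$ coincides with the valid list $\Sigma_0$-colouring $c_0$ of $G_0$, Claim~\ref{cla:ngbeta} ensures that all $\Sigma$-conflicts in $G$ internal to $V\setminus Y$ are already resolved. The main obstacle is precisely the analysis of the second type of conflict, since it requires exploiting the full strength of condition~(ii) of the definition of a very special pair; once the case distinction above is carried out, the remaining verifications reduce to bookkeeping based on the construction of $G_0$, $\Sigma_0$, and the lists $L(e_y)$.
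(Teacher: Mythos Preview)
Your proof is correct and follows essentially the same approach as the paper: you use the definition of $L(e_y)$ to handle conflicts between $Y$ and $V\setminus Y$, and property~(ii) of a very special pair to show that any two $\Sigma$-conflicting vertices of $Y$ give rise to adjacent (or parallel) edges in $H$. Your case analysis is in fact more carefully spelled out than the paper's (in particular the subcase $t\in Y$, which the paper handles implicitly), but the underlying argument is the same.
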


\begin{proof}
  This follows since the pair $(X,Y)$ is very special: For every two
  vertices $y,z\in Y$, if~$y$ and~$z$ are adjacent or have a common
  neighbour $w\notin X$, then $X^y=X^z$. This proves that the two vertices
  adjacent in~$Y$ or with a common neighbour not in~$X$ define parallel
  edges in~$H$ and so will have different colours. If two vertices~$y_1$
  and~$y_2$ of~$Y$ have a common neighbour in~$X$, $e_{y_1}$ and~$e_{y_2}$
  will be adjacent in~$H$ and so will get different colours. Since we have
  already removed from the list of vertices in~$Y$ the set of forbidden
  colours (defined by the colours of the vertices in $V\setminus Y$), there
  will be no conflict between the colours of a vertex in~$Y$ and a vertex
  in $V\setminus Y$. We conclude that the edge-colouring of~$H$ will
  provide an extension of~$c$ to a list $\Sigma$-colouring of~$G$.
\end{proof}

\noindent
The following lemma provides a lower bound on the size of~$L(e)$ for the
edges~$e$ in~$H$.

\begin{lemma}\mbox{}\\*
  Let $e=x_1x_2$ be an edge in~$H$. Then we have
  \[|L(e)|\:\ge\:\bigl(\tfrac32+\eps\bigr)\,\beta-
  (\sigma(x_1)-d_H(x_1))-(\sigma(x_2)-d_H(x_2))-10.\]
\end{lemma}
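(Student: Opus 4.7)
The plan is a direct accounting of the colours that must be deleted from $L(y)$ in forming $L(e_y)$, where $e = e_y$ corresponds to the vertex $y \in Y$ with $X^y = \{x_1, x_2\}$. By construction these are exactly the colours that the partial colouring $c$ assigns to $\Sigma$-neighbours of $y$ lying in $V\setminus Y$. So I would show that there are at most $(\sigma(x_1) - d_H(x_1)) + (\sigma(x_2) - d_H(x_2)) + 10$ such vertices; the claimed bound on $|L(e)|$ then follows by subtracting from the initial list size $|L(y)| \ge \bigl(\tfrac32 + \eps\bigr)\,\beta$.

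First I would unpack the definition of $\Sigma$-neighbour. A vertex $v \ne y$ is a $\Sigma$-neighbour of $y$ iff either $v \in N(y)$, or there exists $t$ with $v, y \in \Sigma(t)$; in the second case $y \in \Sigma(t) \subseteq N(t)$ forces $t \in N(y)$. Hence all $\Sigma$-neighbours of $y$ lie in $N(y) \cup \bigcup_{t \in N(y)} \Sigma(t)$. Because $(X,Y)$ is a special $\zeta_S^*$-pair, $y$ has exactly four neighbours, namely $x_1, x_2$ and two further vertices $u_1, u_2$, each of degree four.

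Then I would estimate the five pieces $N(y)\setminus Y$ and $\Sigma(t)\setminus(\{y\}\cup Y)$ for $t \in \{x_1, x_2, u_1, u_2\}$ separately. The set $N(y)\setminus Y$ has size at most $4$. For $t = u_i$, the inclusion $\Sigma(u_i) \subseteq N(u_i)$ together with $d(u_i) = 4$ yields $|\Sigma(u_i)\setminus\{y\}| \le 3$. The crucial step is the estimate for $t = x_i$: by Claim~\ref{cl2}, every vertex of $N_G(x_i)\cap Y$ lies in $\Sigma(x_i)$, and since $d_H(x_i) = |N_G(x_i)\cap Y|$, we get $|\Sigma(x_i)\cap Y| \ge d_H(x_i)$, and therefore $|\Sigma(x_i)\setminus(\{y\}\cup Y)| \le \sigma(x_i) - d_H(x_i)$. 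Summing these five contributions (any double counting only tightens the bound) gives $4 + (\sigma(x_1) - d_H(x_1)) + (\sigma(x_2) - d_H(x_2)) + 3 + 3$, which is exactly the required upper bound on the number of forbidden colours.

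The argument is really just careful bookkeeping; the only step that does any work is the appeal to Claim~\ref{cl2}, which is what allows us to replace the crude estimate $\sigma(x_i)$ by $\sigma(x_i) - d_H(x_i)$. This saving is precisely what will later let the list edge-colouring argument for $H$ close: each edge of $H$ incident to $x_i$ already absorbs one unit of the budget $\sigma(x_i)$ that would otherwise have appeared in the count of forbidden colours on $e_y$.
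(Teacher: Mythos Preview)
Your proof is correct and follows essentially the same approach as the paper's. Both arguments count the $\Sigma$-neighbours of $y$ in $V\setminus Y$ by splitting according to which neighbour $t\in N(y)$ gives rise to the conflict, and both rely on Claim~\ref{cl2} to replace $\sigma(x_i)$ by $\sigma(x_i)-d_H(x_i)$; the only difference is cosmetic (the paper groups the contribution of $\{x_1,x_2\}$ separately from that of $Z=\{u_1,u_2\}$ and $N_G(Z)$, arriving at $2+8$ rather than your $4+3+3$).
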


\begin{proof}
  Let~$y$ be the vertex in~$Y$ such that $e=e_y$. By the definition of~$H$,
  $X^y=\{x_1,x_2\}$. Let~$Z$ be the set of vertices in $V\setminus X$
  adjacent to~$y$ in~$G$. Then, since $(X,Y)$ is a special
  $\zeta_S^*$-pair, $|Z|\le2$ and $|N_G(Z)\setminus Y|\le6$. The colours
  that are possibly forbidden for~$y$ are the colours of $\{x_1,x_2\}$,
  plus the colours of vertices in $(Z\cup N_G(Z))\setminus Y$, plus the
  colours of vertices in
  $(\Sigma(x_1)\setminus Y)\cup(\Sigma(x_2)\setminus Y)$ (note that these
  colours all come from the vertices outside~$Y$). The number of vertices
  in these three sets add up to at most
  $10+(\sigma(x_1)-d_H(x_1))+(\sigma(x_2)-d_H(x_2))$. The lemma follows.
\end{proof}

\noindent
We finish this subsection by applying Lemma~\ref{lem1} in order to obtain
information on the density of subgraphs in~$H$, which we will need in the
next subsection. Recall that for all non-empty subsets $W\subseteq X$,
$Y^W$ denotes the set of vertices~$y\in Y$ with $X^y\subseteq W$ (that is,
the set of vertices of~$Y$ having their two neighbours from~$X$ in~$W$).
By~(S3), we have for all non-empty $W\subseteq X$,
\[e_G(W,V\setminus Y)\:\le\:e_G(W,Y\setminus Y^W)+\zeta_S^*\,|W|.\]

This inequality has the following interpretation in~$H$.

\begin{lemma}\label{lem-density}\mbox{}\\*
  For all non-empty subsets $W\subseteq X({}=V(H)\,)$, we have
  \[\sum_{w\,\in\,W}(\sigma(w)-d_H(w))\:\le\:
  e_H(W,X\setminus W)+\zeta_S^*\,|W|.\]
\end{lemma}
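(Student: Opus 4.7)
The plan is to translate the inequality in~(S3) term by term into the multigraph~$H$; no new combinatorial idea is required beyond~(S3) itself and Claim~\ref{cl2}. The key identity is $d_H(x)=|N_G(x)\cap Y|$ for every $x\in X$, which is immediate from the construction of~$H$: each $y\in Y$ contributes the single edge~$e_y$ joining the two vertices of~$X^y$, so $x$ is incident with~$e_y$ iff $xy\in E(G)$. Claim~\ref{cl2} gives $N_G(x)\cap Y\subseteq\Sigma(x)$, hence $N_G(x)\cap Y=\Sigma(x)\cap Y$, and therefore
\[\sigma(x)-d_H(x)\:=\:|\Sigma(x)\setminus Y|.\]

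Summing this over $w\in W$ and using $\Sigma(w)\subseteq N_G(w)$, I would bound
\[\sum_{w\in W}(\sigma(w)-d_H(w))\:\le\:\sum_{w\in W}|N_G(w)\setminus Y|.\]
Since $W\subseteq X$ is disjoint from~$Y$, the right-hand side counts each edge inside~$W$ twice and each edge from~$W$ to $(V\setminus Y)\setminus W$ once, which by the paper's convention for $e_G(\cdot,\cdot)$ is exactly $e_G(W,V\setminus Y)$. An application of~(S3) then yields
\[e_G(W,V\setminus Y)\:\le\:e_G(W,Y\setminus Y^W)+\zeta_S^*\,|W|.\]

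It then remains to identify $e_G(W,Y\setminus Y^W)$ with $e_H(W,X\setminus W)$. A vertex $y\in Y$ contributes to $e_G(W,Y\setminus Y^W)$ precisely when $y\notin Y^W$ and $y$ has at least one neighbour in~$W$, i.e., when $|X^y\cap W|=1$; in that case the contribution is~$1$. The same $y$ contributes its edge~$e_y$ to $e_H(W,X\setminus W)$ under exactly the same condition. Summing over all $y\in Y$ (with the appropriate multiplicities, since distinct $y,y'\in Y$ with $X^y=X^{y'}$ give parallel edges in~$H$) yields $e_G(W,Y\setminus Y^W)=e_H(W,X\setminus W)$, and chaining together the three displayed relations completes the proof. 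The only subtle point throughout is keeping straight the double-counting convention used in the definition of $e_G(\cdot,\cdot)$; otherwise this is a direct bookkeeping argument.
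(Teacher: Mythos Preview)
Your proof is correct and follows essentially the same route as the paper: bound $\sum_{w\in W}(\sigma(w)-d_H(w))$ above by $e_G(W,V\setminus Y)$, apply~(S3), and then identify $e_G(W,Y\setminus Y^W)$ with $e_H(W,X\setminus W)$. The only minor difference is that you go through the exact identity $\sigma(x)-d_H(x)=|\Sigma(x)\setminus Y|$ via Claim~\ref{cl2}, whereas the paper simply uses $\sigma(w)\le d_G(w)$ (immediate from $\Sigma(w)\subseteq N_G(w)$) to get $\sigma(w)-d_H(w)\le d_G(w)-d_H(w)=|N_G(w)\setminus Y|$ directly; Claim~\ref{cl2} is not actually needed here.
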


\begin{proof}
  First note that
  $\sum\limits_{w\,\in\,W}(\sigma(w)-d_H(w))\le
  \sum\limits_{w\,\in\,W}(d_G(w)-d_H(w))=e_G(W,V\setminus Y)$. We also have
  $e_G(W,Y\setminus Y^W)=e_H(W,X\setminus W)$. Combining these two
  observations with the formula in (S3) immediately gives the required
  inequality.
\end{proof}

\noindent
At this point, our aim will be to apply Kahn's approach to the
multigraph~$H$ with the list assignment~$L$, to prove the existence of a
proper list edge-colouring for~$H$. This is described in the next
subsection.

We summarise the properties we assume are satisfied by the multigraph~$H$
and the list assignment~$L$ of the edges of~$H$. For these conditions we
just consider $\sigma(v)$ as an integer with certain properties, assigned
to each vertex~$v$ of~$H$.

\smallskip
\qiteee{(H1)}For all vertices~$v$ in~$H$, we have
$d_H(v)\le\sigma(v)\le\beta$.

\smallskip
\qiteee{(H2)}For all edges $e=uv$ in~$H$,
$|L(e)|\ge\bigl(\frac32+\eps\bigr)\,\beta-(\sigma(u)-d_H(u))-
(\sigma(v)-d_H(v))-10$.

\smallskip
\qiteee{(H3)}For all non-empty subsets $W\subseteq V(H)$,
$\sum\limits_{w\,\in\,W}(\sigma(w)-d_H(w))\le
e_H(W,V(H)\setminus W)+\zeta_S^*\,|W|$, for some constant~$\zeta_S^*$.

\subsection{The Matching Polytope and Edge-Colourings}\label{sec-mp}

We briefly describe the matching polytope of a multigraph. More about this
subject can be found in~\cite[Chapter~25]{Sch03}.

Let~$H$ be a multigraph with~$m$ edges. Let~$\MM(H)$ be the set of all
matchings of~$H$, including the empty matching. For each $M\in\MM(H)$, let
us define the $m$-dimensional characteristic vector~$\textbf{1}_M$ as
follows: $\textbf{1}_M=(x_e)_{e\in E(H)}$, where $x_e=1$ for an edge
$e\in M$, and $x_e=0$ otherwise. The matching polytope of~$H$, denoted
$\MP(H)$, is the polytope defined by taking the convex hull of all the
vectors~$\textbf{1}_M$ for $M\in\MM(H)$. Also, for any real
number~$\lambda$, we set
$\lambda\,\MP(H)=\{\,\lambda\,x\mid x\in\MP(H)\,\}$.

Edmonds~\cite{Ed65} gave the following characterisation of the matching
polytope.

\begin{theorem}[Edmonds~\cite{Ed65}]\label{edm-mp}\mbox{}\\*
  A vector $\vec{x}=(x_e)$ is in $\MP(H)$ if and only if $x_e\ge0$ for
  all~$x_e$ and the following two types of inequalities are satisfied:

  \smallskip
  \qite{$\bullet$}For all vertices $v\in V(H)$,
  $\sum\limits_{e:\:\text{$v$ incident to $e$}}\!\!\!x_e\le1$;

  \smallskip
  \qite{$\bullet$}for all subsets $W\subseteq V(H)$ with $|W|\ge3$
  and~$|W|$ odd, $\sum\limits_{e\,\in\,E(W)}\!x_e\le\half\,(|W|-1).$
\end{theorem}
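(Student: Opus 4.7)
The plan is to prove the two inclusions of the claimed set equality separately. Let $P(H)$ denote the polyhedron defined by the nonnegativity, degree, and odd-set inequalities in the statement.

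For $\MP(H)\subseteq P(H)$, since $P(H)$ is convex it suffices to check that each generator $\mathbf{1}_M$ with $M\in\MM(H)$ lies in $P(H)$. Nonnegativity and the degree constraint are immediate from the matching property. For an odd subset $W\subseteq V(H)$ with $|W|\ge 3$, the edges of $M$ contained in $E(W)$ form a matching inside $W$, saturating $2\,|M\cap E(W)|$ vertices; since $|W|$ is odd, no matching saturates all of $W$, so $2\,|M\cap E(W)|\le |W|-1$, and the odd-set inequality follows.

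For the reverse inclusion $P(H)\subseteq \MP(H)$, I would argue by induction on $|V(H)|+|E(H)|$, taking a minimal counterexample $(H,\vec{x})$. Standard reductions dispose of the easy cases: if some $x_e=0$, apply the inductive hypothesis to $H-e$; if the degree constraint at some vertex $v$ is slack, one writes $\vec{x}$ as a convex combination of two vectors in $P(H)$ with strictly more tight constraints and invokes induction on each. So we may assume $\vec{x}$ is a fractional perfect matching. If $\vec{x}$ is $0/1$ valued it is already the incidence vector of a perfect matching; otherwise we fix an edge $e^*$ with $0<x_{e^*}<1$ and seek a perturbation direction $\vec{d}$ along which $\vec{x}\pm\varepsilon\,\vec{d}$ both remain in $P(H)$, splitting $\vec{x}$ as a convex combination of points closer to the boundary.

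The main obstacle is the interplay with tight odd-set inequalities, since a perturbation that respects degree constraints can still violate an odd-set inequality. The classical device is an uncrossing lemma: if two tight odd sets $A,B$ with $|A|,|B|\ge 3$ cross, then from $|A|+|B|=|A\cap B|+|A\cup B|$ together with submodularity of the function $W\mapsto\sum_{e\in E(W)}x_e$, one of the pairs $\{A\cap B,A\cup B\}$ or $\{A\setminus B,B\setminus A\}$ consists of tight odd sets (which pair being determined by the parity of $|A\cap B|$). Iterating this step reduces to the case where the tight odd sets form a laminar family. One can then contract a minimal tight odd set $W$ to a single vertex; the projected vector lies in $P$ of the smaller multigraph, induction yields a matching decomposition there, and this lifts back to a decomposition of $\vec{x}$ by combining it with a matching decomposition of $\vec{x}$ restricted to $E(H[W])$, also obtained by induction. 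The delicate point throughout is to preserve oddness and tightness simultaneously through each uncrossing, which is the engine driving the entire argument.
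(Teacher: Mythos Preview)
The paper does not prove this theorem at all: Edmonds' characterisation of the matching polytope is quoted as a classical result from~\cite{Ed65} (with a pointer to~\cite[Chapter~25]{Sch03} for background) and is then used as a black box in the proof of Lemma~\ref{lem-mp}. So there is no ``paper's own proof'' to compare against; any correct argument you supply goes beyond what the paper does.

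As for your sketch itself, the easy inclusion $\MP(H)\subseteq P(H)$ is fine. For the hard inclusion your outline mixes two distinct strategies without quite committing to either. You start with a perturbation/vertex argument (find a direction $\vec{d}$ with $\vec{x}\pm\eps\vec{d}\in P(H)$), but then switch to a contract-a-tight-odd-set induction; these are separate proofs and neither is completed. In particular, the step ``if the degree constraint at some vertex $v$ is slack, one writes $\vec{x}$ as a convex combination of two vectors in $P(H)$ with strictly more tight constraints'' is not justified and is not how the reduction to fractional perfect matchings usually goes --- one typically works with an extreme point of $P(H)$ from the outset, so that enough constraints are tight by hypothesis. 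In the contraction route, the assertion that the decomposition on the contracted graph ``lifts back'' by combining with a decomposition on $E(H[W])$ hides real work: one must match up, convex-combination by convex-combination, which matchings inside $W$ saturate the single vertex of $W$ that is covered from outside, and this requires an argument (or an appeal to the Birkhoff--von~Neumann / bipartite case inside the gadget). Your uncrossing lemma is stated correctly in spirit, but note that $W\mapsto\sum_{e\in E(W)}x_e$ is supermodular, not submodular, for nonnegative $x$; the inequality still runs the right way for the uncrossing, but the sign should be fixed. If you want a clean self-contained argument, Schrijver's short inductive proof (also in~\cite[Chapter~25]{Sch03}) is a good model.
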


\noindent
The significance of the matching polytope and its relation to list
edge-colouring is indicated by the following important result.

\begin{theorem}[Kahn~\cite{Kah00}]\label{kahn-main}\mbox{}\\*
  For all real numbers $\delta,\nu$, $0<\delta<1$ and $\nu>0$, there exists
  a~$\Delta_{\delta,\nu}$ such that for all $\Delta\ge\Delta_{\delta,\nu}$
  the following holds. If~$H$ is a multigraph and~$L$ is a list assignment
  of colours to the edges of~$H$ so that{

    \smallskip
    \qite{$\bullet$}$H$ has maximum degree at most~$\Delta$;

    \smallskip
    \qite{$\bullet$}for all edges $e\in E(H$), $|L(e)|\ge\nu\Delta$;

    \smallskip
    \qite{$\bullet$}the vector $\vec{x}=(x_e)$ with $x_e=\dfrac1{|L(e)|}$
    for all $e\in E(H)$ is an element of $(1-\delta)\,\MP(H)$.

  }\smallskip\noindent
  Then there exists a proper edge-colouring of~$H$ where each edge gets a
  colour from its own list.
\end{theorem}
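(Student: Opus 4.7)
The plan is to follow Kahn's semi-random (``nibble'') method, adapted to list edge-colouring. The crucial analytic input, extracted from Edmonds' characterisation of $\MP(H)$ (Theorem~\ref{edm-mp}), is this: since the vector $\vec{x}=\bigl(1/|L(e)|\bigr)_e$ lies in $(1-\delta)\,\MP(H)$, the rescaled vector $\vec{x}/(1-\delta)$ is itself in $\MP(H)$ and can therefore be written as a convex combination of matching incidence vectors. A stronger statement, essentially due to Kahn, realises those marginals by a \emph{hard-core} probability distribution $\mu$ on $\MM(H)$, determined by nonnegative activities on the edges. The virtue of such a distribution is a strong conditional-independence property: conditioning on some edges being absent leaves a hard-core distribution on the remaining graph, with marginals that shift in a controlled way. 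This is the slackness that the $(1-\delta)$ factor buys us.

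The second step is the nibble itself. Process the colours in small batches, each of size a tiny fraction of $\Delta$. For each colour $c$ in the current batch, independently sample a matching $M_c$ from a hard-core distribution on the currently surviving edges whose marginals match $\bigl(1/|L_t(e)|\bigr)_e$, where $L_t(e)$ is the list of $e$ at time $t$; tentatively assign $c$ to the edges of $M_c$ that still have $c$ in their list, and keep the assignment only where no two adjacent edges in the batch tried to use $c$. After one nibble, each surviving edge $e$ loses from its list roughly the same fraction, and roughly the same fraction of its neighbours retire. The invariant to maintain is that for every surviving edge, the updated list still has size at least $\nu$ times the residual maximum degree, and the updated marginal vector $\bigl(1/|L_t(e)|\bigr)_e$ still lies in $(1-\delta_t)\,\MP(H_t)$ for some slightly larger but still bounded $\delta_t$.

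The main obstacle, and the heart of Kahn's paper, is concentration. At each nibble one needs tight control of many random variables---list sizes, available edge degrees, and the subset sums appearing in Edmonds' odd-set inequalities---around their expectations, so that the hard-core hypothesis carries over to the next step. This is handled by Talagrand's inequality (or an Azuma-type bound on a matching-exposure martingale), exploiting the fact that a single edge's retirement only affects quantities within distance two in $H$. A union bound over polynomially many events shows that with high probability the invariants survive $O(\log \Delta)$ nibbles, after which every surviving edge has a list much larger than twice its residual degree; an elementary greedy step then finishes the colouring. Choosing $\Delta_{\delta,\nu}$ large enough to absorb all concentration error terms and the losses from each batch then gives the theorem. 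The subtlest point is not the iteration itself but the construction and inductive updating of hard-core distributions realising prescribed marginals derived from a point in a slightly-shrunk matching polytope, which is exactly where the polytope hypothesis of the theorem enters essentially.
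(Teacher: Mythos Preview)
Your high-level architecture---hard-core distributions on matchings, an iterated nibble, greedy finish---is correct and matches Kahn's. But several organisational choices in your sketch diverge from what Kahn (and the paper, in Section~\ref{background}) actually does, and one of them hides a real difficulty.

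First, there is no batching of colours. In each round of the procedure \emph{every} colour~$\alpha$ independently samples a matching~$M_\alpha$ in its own colour graph $H_\alpha=\{e:\alpha\in L(e)\}$; edges hit by at least one~$M_\alpha$ retire, and for each~$\alpha$ the vertices covered by~$M_\alpha$ are deleted from~$H_\alpha$. Second, the activities $\{\lambda_{\alpha,e}\}$ are chosen once at the outset (so that the initial marginals on~$H_\alpha$ equal $1/|L(e)|$) and are then \emph{held fixed} through all iterations; one does not refit a new hard-core distribution at each step. What changes are the marginals~$x^i_{\alpha,e}$, because the ambient colour graphs shrink. Third---and this is the point where your plan would run into trouble---the invariant that is actually carried forward is not ``$(1/|L_t(e)|)_e\in(1-\delta_t)\,\MP(H_t)$'', but simply that for every surviving edge the sum $\sum_{\alpha\in L^i(e)} x^i_{\alpha,e}$ remains close to~$1$ while the maximum degree shrinks by a factor roughly~$\mathrm{e}^{-1}$ per round (this is the content of Lemma~\ref{keylem}). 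Maintaining your polytope invariant would force you to re-verify all of Edmonds' odd-set inequalities after each nibble, with concentration over exponentially many events; Kahn sidesteps this entirely.

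The matching-polytope hypothesis is used exactly once, at the very start: via the Kahn--Kayll long-range independence result (Lemma~\ref{lem-hd1}), the condition $\vec{x}\in(1-\delta)\,\MP(H)$ yields a uniform bound $\lambda_{\alpha,e}\le K_{\delta,\nu}/\Delta$ on all activities, and that bound is all that Lemma~\ref{keylem} needs to push the iteration through. So the slack $(1-\delta)$ does not buy you room inside the polytope at later stages; it buys you a bound on the activities at the outset, which is then preserved automatically because the activities never change.
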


\noindent
The theorem above is actually not explicitly stated this way
in~\cite{Kah00}, but can be obtained from the appropriate parts of that
paper. We give some further details about this in the final section of this
paper.

The next lemma allows us to use Theorem~\ref{kahn-main} to complete the
proof.

\begin{lemma}\label{lem-mp}\mbox{}\\*
  Let~$\beta$ and~$\zeta$ be positive real numbers. Let~$H$ be a multigraph
  with a map $\sigma:V(H)\rightarrow\mathbb{N}$, and a weighting
  $(b_e)_{e\in E(H)}$ of the edges with positive real numbers satisfying
  the following three conditions:{

    \smallskip
    \qiteee{(H1')}For all vertices~$v$ in~$H$,
    $d_H(v)\le\sigma(v)\le\beta$.

    \smallskip
    \qiteee{(H2')}For all edges $e=uv$ in~$H$,
    $b_e\ge\bigl(\frac32\,\beta+\tfrac92\,\zeta\bigr)-(\sigma(u)-d_H(u))-
    (\sigma(v)-d_H(v))$.

    \smallskip
    \qiteee{(H3')}For all non-empty $W\subseteq V(H)$,
    $\sum\limits_{w\,\in\,W}(\sigma(w)-d_H(w))\le
    e_H(W,V(H)\setminus W)+\zeta\,|W|$.

  }\noindent
  Then for all edges $e\in E(H)$, we have $b_e\ge\half\,\beta$. And the
  vector $(1/b_e)_{e\in E(H)}$ is in~$\MP(H)$.
\end{lemma}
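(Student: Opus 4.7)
The plan is to derive both conclusions from the joint use of (H1'), (H2') and (H3'), starting with a universal bound on the ``deficiency'' $s(v) := \sigma(v) - d_H(v) \ge 0$.

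First I would observe that (H1') gives $s(v) \le \beta - d_H(v)$, while (H3') applied to the singleton $W = \{v\}$ gives $s(v) \le d_H(v) + \zeta$ (using that $e_H(\{v\}, V(H)\setminus\{v\}) = d_H(v)$). Optimising over $d_H(v)$ yields
\[
s(v) \;\le\; \tfrac{1}{2}(\beta + \zeta) \qquad \text{for every } v \in V(H),
\]
and plugging this into (H2') for an edge $e = uv$ gives $b_e \ge \tfrac{3}{2}\beta + \tfrac{9}{2}\zeta - (\beta+\zeta) = \tfrac{1}{2}\beta + \tfrac{7}{2}\zeta \ge \tfrac{1}{2}\beta$, which is the first conclusion.

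Next, to show $(1/b_e)_{e \in E(H)} \in \MP(H)$, I would verify Edmonds' characterisation (Theorem~\ref{edm-mp}). Nonnegativity is clear. For the vertex constraint at $v$, I would apply the preceding deficiency bound only to the ``other'' endpoint of each edge incident to $v$, which via (H2') gives $b_e \ge \beta + 4\zeta - s(v)$ for every $e \ni v$. Dividing $d_H(v) = \sigma(v) - s(v)$ by this bound gives
\[
\sum_{e \ni v} \frac{1}{b_e} \;\le\; \frac{\sigma(v) - s(v)}{\beta + 4\zeta - s(v)} \;\le\; 1,
\]
the last inequality reducing to $\sigma(v) \le \beta \le \beta + 4\zeta$. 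Note that this also leaves a uniform slack of at least $4\zeta/(\beta+4\zeta)$ at every vertex.

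The real obstacle is the odd-set constraint: for odd $W$ with $|W|\ge 3$, one needs $\sum_{e \in E(W)} 1/b_e \le (|W|-1)/2$. Setting $a_e = 1/b_e$, $C = \tfrac{3}{2}\beta + \tfrac{9}{2}\zeta$, and $A_v = \sum_{e \ni v,\, e \in E(W)} a_e$, rewriting (H2') as $C a_e \le 1 + a_e(s(u)+s(v))$ and summing over $e \in E(W)$ yields
\[
C \sum_{e \in E(W)} a_e \;\le\; |E(W)| + \sum_{v \in W} s(v)\,A_v.
\]
The plan is to bound the right-hand side using $A_v \le 1$ combined with (H3') applied to $W$, and then eliminate $e_H(W, V(H)\setminus W)$ via the degree-sum identity $2|E(W)| + e_H(W, V(H)\setminus W) = \sum_{v \in W} d_H(v) \le \beta|W|$. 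This gives a bound that is effective when $|E(W)|$ is large, and can be paired with the complementary estimate $a_e \le 2/(\beta+7\zeta)$ from the first step (effective when $|E(W)|$ is small) and with the per-vertex slack $4\zeta/(\beta+4\zeta)$ from the vertex constraint (which, summed over $W$, sharpens the trivial bound $2\sum_{e \in E(W)} a_e \le |W|$ to $\le |W| - 1$ as soon as $|W| \ge (\beta+4\zeta)/(4\zeta)$). The delicate point, which I expect to be the main work of the proof, is arranging these estimates so that they cover all odd $|W|\ge 3$ uniformly; the constant $\tfrac{9}{2}\zeta$ in (H2') appears to be calibrated precisely so that the small-$|W|$ and large-$|W|$ regimes glue together.
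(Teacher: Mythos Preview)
Your derivation of $b_e\ge\tfrac12\beta$ and of the vertex constraint is correct and essentially coincides with the paper's: the paper also applies (H3') to singletons to get $d(v)\ge\tfrac12(\sigma(v)-\zeta)$, and from this deduces $a_e\ge d(v)+4\zeta$ and $a_e\ge\tfrac12\beta+\tfrac72\zeta$, which are exactly your two one-sided estimates.

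The odd-set constraint is where your plan does not close. Concretely, take $|W|=3$. Your ``large $|E(W)|$'' bound, obtained from $C\sum a_e\le|E(W)|+\sum_v s(v)$ together with (H3') and the degree-sum identity, gives $C\sum a_e\le(\beta+\zeta)|W|-|E(W)|$; for $|W|=3$ this yields $\sum a_e\le1$ only when $|E(W)|\ge\tfrac32\beta-\tfrac32\zeta$. Your ``small $|E(W)|$'' bound $a_e\le2/(\beta+7\zeta)$ gives $\sum a_e\le1$ only when $|E(W)|\le\tfrac12(\beta+7\zeta)$. When $\beta>5\zeta$ these ranges do not meet, and the per-vertex slack argument only kicks in for $|W|\ge(\beta+4\zeta)/(4\zeta)$, which is far beyond~$3$. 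Even the optimal convex combination of your two bounds gives $\sum a_e\le\frac{(\beta+\zeta)|W|}{2(\beta+4\zeta)}$, which for $|W|=3$ exceeds~$1$ once $\beta>5\zeta$. So the three ingredients you list cannot be glued together as stated; something further is needed.

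The paper's proof of the odd-set inequality proceeds quite differently in this regime. After handling the case $e(W,W^c)\ge\beta$ via the bound $a_e\ge d(v)+4\zeta$ (which you implicitly have), the paper in the remaining case bounds $\sum_{e\in E(W)}a_e$ \emph{from below}, not $\sum 1/a_e$ from above. Writing $q=\tfrac32\beta+\tfrac92\zeta$ and $p=\min_{e\in E(W)}a_e$, it shows
\[
\sum_{e\in E(W)}a_e\;\ge\;q\,|E(W)|-\beta\,c(W)+\tfrac12(q-p)^2,
\]
the last term coming from $c(u)^2+c(v)^2\ge\tfrac12(q-p)^2$ for the edge achieving the minimum. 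A convexity lemma (if $p\le r_i\le q$ and $\sum r_i\ge qm-(q-p)S$, then $\sum r_i^{-1}\le S/p+(m-S)/q$) then converts this into an upper bound on $\sum 1/a_e$, and a short algebraic computation shows the result is at most $\tfrac13|W|\le\tfrac12(|W|-1)$. The key idea you are missing is this inversion: controlling $\sum a_e$ and the spread $q-p$, and using convexity of $x\mapsto1/x$ to pass to $\sum 1/a_e$. The quadratic gain $\tfrac12(q-p)^2$ is precisely what covers the intermediate range your two linear bounds leave open.
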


\noindent
The proof of Lemma~\ref{lem-mp} will be given in
Subsection~\ref{proof-lem-mp}. This lemma guarantees that for any $\eps>0$,
there exists a~$\beta_\eps$ such that for all $\beta\ge\beta_\eps$,
Theorem~\ref{kahn-main} can be applied to a multigraph~$H$ with an edge
list assignment~$L$ satisfying properties (H1)\,--\,(H3) stated at the end
of the previous subsection.

To see this, take $\delta_\eps=\dfrac\eps{3+2\eps}$, so $0<\delta_\eps<1$.
In order to be able to apply Theorem~\ref{kahn-main}, we want to prove the
existence of~$\beta_{\eps,\zeta_S^*}$ such that for any
$\beta\ge\beta_{\eps,\zeta_S^*}$, the vector $\vec{x} =(x_e)$,
$x_e=\dfrac1{|L(e)|}$, is in $(1-\delta_\eps)\,\MP(H)$. Let~$\zeta_S^*$ be
the constant described in condition~(H3). By condition~(H2), we have for
all $e=uv$ in~$H$,
\begin{align*}
  (1-\delta_\eps)\,|L(e)|\:&\ge\:
  (1-\delta_\eps)\,\bigl(\bigl(\tfrac32+\eps\bigr)\,\beta-
  (\sigma(u)-d_H(u))-(\sigma(v)-d_H(v))-10\bigr)\\
  &\ge\:(1-\delta_\eps)\,\bigl(\tfrac32+\eps\bigr)\,\beta-
  (\sigma(u)-d_H(u))-(\sigma(v)-d_H(v))-10\\
  &=\:\bigl(\tfrac32\,\beta+\half\,\eps \beta\bigr)-(\sigma(u)-d_H(u))-
  (\sigma(v)-d_H(v))-10.
\end{align*}
Let $\beta_{\eps,\zeta_S^*}=\dfrac{9\zeta_S^*+20}{\eps}$. For
$\beta\ge\beta_{\eps,\zeta_S^*}$, we have
\[(1-\delta_\eps)\,|L(e)|\:\ge\:
\bigl(\tfrac32\,\beta+\tfrac92\, \zeta_S^*\bigr)-(\sigma(u)-d_H(u))-
(\sigma(v)-d_H(v)).\]
So by Lemma~\ref{lem-mp}, taking $b_e=(1-\delta_\eps)\,|L(e)|$, the vector
$\Bigl(\dfrac{x_e}{1-\delta_\eps}\Bigr)_{e\in E(H)}$ is in~$\MP(H)$. We
infer that $\vec{x}\in(1-\delta_\eps)\,\MP(H)$.

Notice that the first conclusion of Lemma~\ref{lem-mp} means that
$|L(e)|=\dfrac{b_e}{1-\delta_\eps}>b_e\ge\tfrac12\,\beta$ for all edges~$e$
in~$H$.

Now set
$\beta_{S,\eps}=\max\{\,\beta_S^*,\,\beta_{\eps,\zeta_S^*},\,
\Delta_{\delta_\eps,1/2}\,\}$, where $\zeta_S^*=132\,(3-\chi(S))$,
$\beta_S^*=\tfrac23\,(\zeta_S^*)^2$ (see Lemma~\ref{lem1} and the text
after it), $\beta_{\eps,\zeta_S^*}=\dfrac{9\zeta_S^*+20}{\eps}$,
$\delta_\eps=\dfrac\eps{3+2\eps}$ (see above),
and~$\Delta_{\delta_\eps,1/2}$ is according to Theorem~\ref{kahn-main}
(where $\Delta_{\delta_\eps,1/2}$ is chosen since we have
$|L(e)|\ge\tfrac12\,\beta$ for all $e\in E(H)$ (see above)). Assume
$\beta\ge\beta_{S,\eps}$. Then we can apply Theorem~\ref{kahn-main}, which
implies that the multigraph~$H$ defined in Subsection~\ref{sec2.1} has a
list edge-colouring corresponding to the list assignment~$L$.
Lemma~\ref{lem:ext} then implies that the colouring~$c$ can be extended to
a list \mbox{$\Sigma$-colouring} of the original graph~$G$. This final
contradiction completes the proof of Theorem~\ref{th1}.\eop

\section{Proofs of the Main Lemmas}\label{thelemmas}

We use the terminology and notation from the previous sections.

\subsection{Proof of Lemma~\ref{lem1}}\label{proof-lem1}

Let~$S$ be a surface, set $\zeta_S^*=132\,(3-\chi(S))$, and let~$G$ be a
graph embeddable in~$S$, so that~$G$ is edge-maximal with respect to being
embeddable in~$S$.

From Lemma~\ref{lem0}, we immediately obtain the following.

\begin{claim}\label{cl-a}
  For any vertex~$v$ and any two consecutive neighbours~$u_1,u_2$ of~$v$
  (consecutive with respect to the chosen circular order imposed by the
  embedding), we have $u_1u_2\in E(G)$.
\end{claim}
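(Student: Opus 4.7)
The plan is to observe that Claim~\ref{cl-a} is essentially an immediate consequence of Lemma~\ref{lem0} combined with the edge-maximality hypothesis on~$G$. No new construction or argument is needed beyond invoking these.

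Concretely, first I would fix the vertex~$v$ and the two consecutive neighbours~$u_1,u_2$ with respect to the chosen circular order around~$v$ induced by the embedding of~$G$ in~$S$. Then I would apply Lemma~\ref{lem0} to the triple $(v, u_1, u_2)$: the lemma guarantees that the graph $G' = G + u_1 u_2$ (where the edge is added only if not already present) remains embeddable in~$S$. Finally, I would invoke the hypothesis that~$G$ is edge-maximal with respect to being embeddable in~$S$, which forces $G' = G$, and hence $u_1 u_2 \in E(G)$ as desired.

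There is essentially no obstacle here. The only minor subtlety is that the notion of ``consecutive neighbours'' relies on having chosen \emph{some} circular order around~$v$ (which is well-defined because the embedding induces such an order on the edges incident with~$v$, regardless of whether~$S$ is orientable); this is already covered by the discussion preceding Lemma~\ref{lem0}. No other cases or calculations need to be carried out, so the proof is a single short paragraph.
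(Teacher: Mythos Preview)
Your proposal is correct and matches the paper's approach exactly: the paper simply states that the claim follows immediately from Lemma~\ref{lem0}, with the edge-maximality hypothesis being the obvious reason why the edge $u_1u_2$ must already be present. You have spelled out that second step explicitly, but there is no substantive difference.
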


\noindent
Next we prove that we can assume~$G$ has a cellular embedding in~$S$.
If~$G$ is a tree, then every leaf will give a structure from~(S2). So we
can assume~$G$ is not a tree. Assume~$S$ is orientable with genus~$h$. By
the definition of $\og(G)$, we must have $\og(G)\le h$, and hence
$\chi(\OS_{\og(G)})=2-2\,\og(G)\ge2-2h=\chi(S)$. That also means that the
constant in Lemma~\ref{lem1} satisfies
$\zeta_{\OS_{\og(G)}}^*\le\zeta_S^*$. Hence if we prove the lemma
assuming~$G$ is embeddable in~$\OS_{\og(G)}$, then the lemma for~$G$
embeddable in~$S$ directly follows. So we can use Lemma~\ref{lem1} with the
surface~$\OS_{\og(G)}$ instead of~$S$, and by Lemma~\ref{lem-embedding}, we
can use a cellular embedding of~$G$ in~$\OS_{\og(G)}$.

If~$S$ is non-orientable, then exactly the same argument can be applied,
this time using the surface~$\NOS_{\nog(G)}$ (and using the assumption
that~$G$ is not a tree).

\medskip
We need some further notation and terminology. The set of faces of~$G$ is
denoted by~$F$. Recall that since the embedding in~$S$ is cellular, every
face is homeomorphic to an open disk in~$\mathbb{R}^2$. For such a
face~$f$, a \emph{boundary walk of~$f$} is a walk consisting of vertices
and edges as they are encountered when walking along the whole boundary
of~$f$, starting at some vertex. The \emph{degree of a face~$f$},
denoted~$d(f)$, is the number of edges on the boundary walk of~$f$. Note
that this means that some edges may be counted more than once. The
\emph{order} of a face is the number of vertices in its boundary. We always
have that the order of~$f$ is at most~$d(f)$.

\medskip
Now suppose that~$G$ does not contain any of the structures~(S1) or~(S2).
In order to prove Lemma~\ref{lem1}, we only need to prove that~$G$ contains
structure~(S3). In other words, we need to prove that~$G$ contains a very
special $\zeta_S^*$-pair $(X,Y)$ with~$X$ and~$Y$ non-empty which satisfies
the inequality of~(S3) for all non-empty subsets $W\subseteq X$.

We easily see that~$G$ has at least $\zeta_S^*+2\ge134$ vertices (otherwise
it contains structure~(S1)). So by Lemma~\ref{lem-1} we know that all
vertices have degree at least three.

Let us call the vertices of degree at least $\zeta_S^*+1$ \emph{big}; the
other vertices are called \emph{small}. We use~$B$ to denote the set of big
vertices.

Since we assumed that~$G$ does not contain structure~(S2), we immediately
get:

\begin{claim}\label{cl-d}
  All vertices of degree at most five have at least two big neighbours.
\end{claim}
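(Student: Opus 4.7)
The plan is essentially to observe that Claim~\ref{cl-d} is just the contrapositive of the assumption we are working under. Recall that structure~(S2) asserts the existence of a vertex of degree at most five having at most one neighbour of degree more than~$\zeta_S^*$. Since we have assumed that~$G$ contains neither (S1) nor~(S2), no such vertex can exist in~$G$.

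Therefore, for every vertex~$v$ with $d(v) \le 5$, the number of neighbours of~$v$ of degree strictly greater than~$\zeta_S^*$ must be at least two. By definition, a vertex is big exactly when its degree is at least $\zeta_S^*+1$, i.e., strictly greater than~$\zeta_S^*$, so this is precisely the statement that~$v$ has at least two big neighbours.

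The main (and only) step is to unpack the definition of big, match it against the wording of (S2), and take the contrapositive. There is no real obstacle here; the claim is recorded explicitly because it will be invoked repeatedly in the discharging/structural argument that follows to identify the sets $X$ and $Y$ forming the very special $\zeta_S^*$-pair.
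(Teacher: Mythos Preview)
Your proposal is correct and matches the paper's approach exactly: the paper simply states that since~$G$ is assumed not to contain structure~(S2), Claim~\ref{cl-d} follows immediately, which is precisely the contrapositive you spell out.
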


\noindent
We continue our analysis using the classical technique of
\emph{discharging} (see, e.g., \cite[Section 15.2]{BoMu08}). Give each
vertex~$v$ an initial charge $\rho(v)=6d(v)-36$. Since~$G$ is simple and
has a cellular embedding in~$S$, every face has degree at least three. This
gives $2\,|E|\ge3\,|F|$, and hence, by Euler's Formula,
$\sum\limits_{v\,\in\,V}\rho(v)=12\,|E|-36\,|V|\le-36\,|V|+36\,|E|-
36\,|F|=-36\,\chi(S)$.

We further redistribute charges according to the following rules:{

  \smallskip
  \qiteee{(R1)}Each vertex of degree three that is adjacent to three big
  vertices receives a charge~6 from each of its neighbours.

  \smallskip
  \qiteee{(R2)}Each vertex of degree three that is adjacent to two big
  vertices receives a charge~9 from each of its big neighbours.

  \smallskip
  \qiteee{(R3)}Each vertex of degree four that is adjacent to four big
  vertices receives a charge~3 from each of its big neighbours.

  \smallskip
  \qiteee{(R4)}Each vertex of degree four that is adjacent to three big
  vertices receives a charge~4 from each of its big neighbours.

  \smallskip
  \qiteee{(R5)}Each vertex of degree four that is adjacent to two big
  vertices receives a charge~6 from each of its big neighbours.

  \smallskip
  \qiteee{(R6)}Each vertex of degree five receives a charge~3 from each of
  its big neighbours.

}\medskip\noindent
Denote the resulting charge of a vertex $v\in V$ after applying rules
(R1)\,--\,(R6) by $\rho'(v)$. Since the global charge has been preserved,
we have $\sum\limits_{v\,\in\, V}\rho'(v)\le-36\,\chi(S)$. We will show
that for most $v\in V$, $\rho'(v)$ is non-negative.

Combining Claim~\ref{cl-d} with rules (R1)\,--\,(R6) and our knowledge that
$\rho(v)=6d(v)-36$, we find that $\rho'(v)=0$ if $d(v)=3,4$, while
$\rho'(v)\ge0$ if $d(v)=5$. If~$v$ is a small vertex with $d(v)\ge6$, we
have $\rho'(v)=\rho(v)=6d(v)-36\ge0$.

It follows that we must have
\begin{equation}\label{eq7}
  \sum_{v\,\in\,B}\rho'(v)\:\le\:-36\,\chi(S).
\end{equation}
To derive the relevant consequence of that formula, we must make a detailed
analysis of the neighbours of vertices in~$B$.

As we explained in Subsection~\ref{gr-on-sur}, the embedding of~$G$ in~$S$
allows us to choose a circular order on the neighbours of each vertex~$v$.
By Claim~\ref{cl-a} we know that two consecutive vertices in this order are
adjacent. If~$u$ is a neighbour of~$v$, then by $u^+,u^{++}$ we denote the
successor and second successor of~$u$ in the circular order of neighbours
of~$v$, while $u^-,u^{--}$ denote the predecessor and second predecessor
of~$u$ in that order.

\medskip
We distinguish five different types of neighbours of a vertex $v\in B$:
\begin{align*}
  M_1(v)\:&=\:\{\,u\in N(v)\mid
  \{u^-,u^{--},u^+,u^{++}\}\cap B\ne\varnothing\,\};\\
  M_{4a}(v)\:&=\:\{\,u\in N(v)\setminus M_1(v)\mid\text{$d(u)=4$ and
    $u^-$ or $u^+$ have degree at least five}\,\};\\
  M_{4b}(v)\:&=\:\{\,u\in N(v)\setminus M_1(v)\mid
  d(u)=d(u^-)=d(u^+)=4\,\};\\
  M_5(v)\:&=\:\{\,u\in N(v)\setminus M_1(v)\mid d(u)=5\,\};\\
  M_6(v)\:&=\:\{\,u\in N(v)\setminus M_1(v)\mid d(u)\ge6\,\}.
\end{align*}

First observe that if a neighbour~$u$ of~$v$ has degree three, then~$u^-$
or~$u^+$ is in~$B$. This follows since by Claim~\ref{cl-a}, $u^-$ and~$u^+$
are also neighbours of~$u$. And by Claim~\ref{cl-d}, a vertex of degree
three has at least two big neighbours. From this observation we also get
that if $u\in N(v)\setminus M_1(v)$ is a small vertex, then~$u^-$ and~$u^+$
both have degree at least four.

As a consequence, every neighbour of~$v$ is in exactly one set. Our aim in
the following, in order to prove Lemma~\ref{lem1}, is to show that most
neighbours of vertices $v\in B$ are in~$M_{4b}(v)$.

\medskip
We now evaluate the charge that a vertex $v\in B$ has given to its
neighbours. If $u\in M_1(v)$, then~$v$ gave at most $9+9+9=27$ to
$\{u^-,u,u^+\}$; if $u\in M_{4a}(v)$, then~$v$ gave at most $3+6+6=15$ to
$\{u^-,u,u^+\}$; if $u\in M_{4b}(v)$, then~$v$ gave at most $6+6+6=18$ to
$\{u^-,u,u^+\}$; if $u\in M_5(v)$, then~$v$ gave at most $6+3+6=15$ to
$\{u^-,u,u^+\}$; and, finally, if $u\in M_6(v)$, then~$v$ gave at most
$6+0+6=12$ to $\{u^-,u,u^+\}$. Setting $m_1=|M_1(v)|$,
$m_{4a}=|M_{4a}(v)|$, $m_{4b}=|M_{4b}(v)|$, $m_5=|M_5(v)|$, and
$m_6=|M_6(v)|$, we can conclude that~$v$ gave at most
\begin{align*}
  \tfrac13\,(27m_1+{}&15m_{4a}+18m_{4b}+15m_5+12m_6)\\
  &{}\le\:9m_1+6m_{4b}+5(m_{4a}+m_5 +m_6)\:\le\:5d(v)+4m_1+m_{4b}
\end{align*}
to its neighbourhood. This means that the remaining charge $\rho'(v)$ of a
vertex $v\in B$ must satisfy
\[\rho'(v)\:\ge\:(6d(v)-36)-(5d(v)+4m_1+m_{4b})\:=\:d(v)-m_{4b}-4m_1-36.\]
By definition, $|M_1(v)|$ is at most four times the number of neighbours
of~$v$ in~$B$. Consider the subgraph $G[B]$ of~$G$ induced by~$B$. As a
subgraph of~$G$, this graph is embeddable in~$S$. Given that it is simple
as well, no face of such an embedding is incident with two or fewer edges.
So Euler's Formula means that $G[B]$ has at most $3\,|B|-3\,\chi(S)$ edges,
and hence
\[\sum_{v\,\in\,B}|M_1(v)|\:\le\:\sum_{v\,\in\,B}4d_{G[B]}(v)\:=\:
8\,|E(G[B])|\:\le\:24\,|B|-24\,\chi(S).\]
Combining the last two inequalities with~\eqref{eq7} gives
\[-36\,\chi(S)\:\ge\:\sum_{v\,\in\,B}\rho'(v)\:\ge\:
\sum_{v\,\in\,B}(d(v)-|M_{4b}(v)|)-4\,(24\,|B|-24\,\chi(S))-36\,|B|.\]
Using that $B\ne\varnothing$ (otherwise~$G$ contains structure~(S1)) and
$\chi(S)\le2$, this can be rewritten as
\[\sum_{v\,\in\,B}(d(v)-|M_{4b}(v)|)\:\le\:132\,|B|-132\,\chi(S)\:<
\:132\,|B|+132\,(2-\chi(S))\:\le\:132\,(3-\chi(S))\,|B|.\]

Define $X_0=B$ and $Y_0=\bigcup_{v\in B}M_{4b}(v)$. Note that the previous
inequality can be written
\begin{equation}\label{eq4}
  e(X_0,V\setminus Y_0)\:<\:\zeta_S^*\,|X_0|.
\end{equation}
Also observe that the pair $(X_0,Y_0)$ is a special $\zeta_S^*$-pair: The
vertices in~$X_0$ are the big vertices, hence have degree at least
$\zeta_S^*+1$. For all vertices $u\in Y_0$, we have $u\in M_{4b}(v)$ for
some $v\in B$, and hence $u$, $u^-$ and~$u^+$ have degree four in~$G$, and
the fourth neighbour of~$u$ is in $B=X_0$ by Claim~\ref{cl-d}.

\smallskip
We need some more information about the neighbours of vertices in~$Y_0$.

\begin{claim}\label{y0}
  Let~$v$ be a big vertex, $u\in M_{4b}(v)$, and~$w$ be the big neighbour
  of~$u$ different from~$v$. Then all of $vu^+$, $vu^-$, $wu^+$ and~$wu^-$
  are edges of~$G$.
\end{claim}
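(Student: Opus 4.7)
The plan is to determine the circular order of $u$'s four neighbours around $u$, and then read off the two non-trivial edges $wu^-$ and $wu^+$ directly from Claim~\ref{cl-a} applied at $u$. The edges $vu^-$ and $vu^+$ are immediate, since $u^-$ and $u^+$ are by definition neighbours of $v$. Moreover, I identify $N(u) = \{v, u^-, u^+, w\}$: $u$ has degree $4$ because $u \in M_{4b}(v)$; $v$ is a neighbour by assumption; $u^-$ and $u^+$ are neighbours of $u$ by Claim~\ref{cl-a} at $v$ applied to the consecutive pairs $(u^-, u)$ and $(u, u^+)$; and $w$ is the remaining, big neighbour.

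Now consider the cyclic order of $N(u)$ around $u$ imposed by the embedding. By Claim~\ref{cl-a} applied at $u$, any two cyclically consecutive neighbours of $u$ are adjacent in $G$. Up to rotation and reflection, the three placements of $u^-, u^+, w$ in the positions other than $v$ split into two types: either $w$ sits in the position opposite $v$ (between $u^-$ and $u^+$), or $u^-$ and $u^+$ are cyclically consecutive at $u$. In the first type, the pairs $(u^-, w)$ and $(w, u^+)$ are cyclically consecutive, so Claim~\ref{cl-a} at $u$ immediately delivers $wu^-, wu^+ \in E(G)$, which is exactly what we want.

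The crux of the argument is therefore to exclude the second type. In that case, Claim~\ref{cl-a} at $u$ would force the edge $u^- u^+$ into $G$. Combining this with $u^-v \in E(G)$ and with the two further edges $u^- u$ and $u^- u^{--}$ (which come from Claim~\ref{cl-a} at $v$ applied to the consecutive pairs $(u^-, u)$ and $(u^{--}, u^-)$), together with the fact that $d(u^-) = 4$ (from $u \in M_{4b}(v)$), pins down $N(u^-) = \{v, u, u^{--}, u^+\}$. But then $v$ is the \emph{only} big neighbour of $u^-$: the vertex $u$ has degree $4$ and is therefore small, while $u^{--}$ and $u^+$ are small because $u \notin M_1(v)$ and $u \in M_{4b}(v)$, respectively. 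This contradicts Claim~\ref{cl-d}, which demands that every vertex of degree at most $5$ — and $u^-$ has degree exactly $4$ — has at least two big neighbours. The main obstacle is simply to verify carefully that the four membership conditions $d(u) = d(u^-) = d(u^+) = 4$ and $u^{--} \notin B$ hold, and these are built directly into the definitions of $M_{4b}(v)$ and $M_1(v)$; everything else is bookkeeping.
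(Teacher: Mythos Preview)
Your proof is correct and follows essentially the same route as the paper: both arguments observe that if the cyclic order of $N(u)=\{v,u^-,u^+,w\}$ around $u$ is not $(v,u^+,w,u^-)$ or its reverse, then $u^-$ and $u^+$ are consecutive, forcing $u^-u^+\in E$ by Claim~\ref{cl-a}, whence $N(u^-)=\{v,u,u^+,u^{--}\}$ contains only one big vertex, contradicting Claim~\ref{cl-d}. Your version is a bit more explicit in setting up $N(u)$ and justifying why $u^{--},u^+\notin B$, but the argument is the same.
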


\begin{proof}
  Consider the circular order of the neighbours of~$u$ imposed by the
  embedding. In any circular order different from $(v,u^+,w,u^-)$ or the
  reverse, $u^+$ and~$u^-$ are consecutive. By Claim~\ref{cl-a}, this means
  that $u^+u^-\in E$. So the neighbours of~$u^-$ are $\{v,u,u^+,u^{--}\}$.
  Since $u,u^+,u^{--}\notin B$, by the definition of~$M_{4b}(v)$, that
  means~$u^-$ has only one big neighbour, contradicting Claim~\ref{cl-d}.

  So the only possible circular orders are $(v,u^+,w,u^-)$ or the reverse,
  and the result follows by Claim~\ref{cl-a}.
\end{proof}

\noindent
Using Claim~\ref{y0}, it follows easily that if $y,z\in Y_0$ are adjacent,
then $X_0^y=X_0^z$; while if $y,z\in Y_0$ share a neighbour $u\notin X_0$,
then~$u$ has degree four and its two neighbours distinct from~$y$ and~$z$
are in~$X_0^y$ and in~$X_0^z$. This gives $X_0^y=X_0^z$.

Thus, we have shown that the pair $(X_0,Y_0)$ is very special.

\medskip
Since~$X_0$ and~$Y_0$ are non-empty, we are done if the pair $(X_0,Y_0)$
also satisfies the inequalities of~(S3) for any non-empty subset
$W\subseteq X_0$. Suppose this is not the case. So there must exist a set
$Z_1\subseteq X_0$ with
\[e(Z_1,V\setminus Y_0)\:>\:
e(Z_1,Y_0\setminus Y_0^{Z_1})+\zeta_S^*\,|Z_1|.\]
Define $X_1=X_0\setminus Z_1$ and $Y_1=Y_0^{X_1}$. Again, by construction,
it is easy to see that $(X_1,Y_1)$ is a very special $\zeta_S^*$-pair. If
it does not satisfy condition~(S3), we iterate the process (see
Figure~\ref{fig:density}) and eventually obtain a very special
$\zeta_S^*$-pair $(X_k,Y_k)$ satisfying condition~(S3). To conclude the
proof, we only need to check that $X_k$ and $Y_k$ are non-empty.

\begin{figure}[htbp]
  \begin{center}
    \includegraphics[scale=0.6]{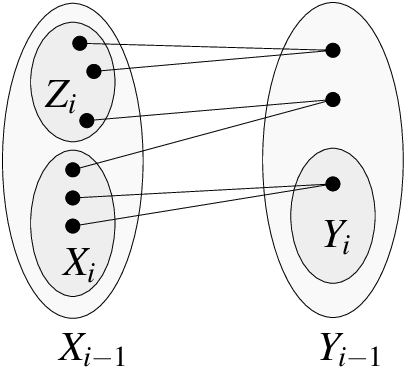}
    \caption{$X_i=X_{i-1}\setminus Z_i$ and $Y_i=Y_{i-1}^{X_i}$.
      \label{fig:density}}
  \end{center}
\end{figure}

Let $1\le i\le k$. Since $X_i=X_{i-1}\setminus Z_i$, we have
\begin{align*}
  e(X_i,V\setminus Y_i)\:&=\:e(X_{i-1},V\setminus Y_i)-
  e(Z_i,V \setminus Y_i)\\
  &\hskip-5mm=\:e(X_{i-1},V\setminus Y_{i-1})+
  e(X_{i-1},Y_{i-1}\setminus Y_i)-e(Z_i,V\setminus Y_{i-1})-
  e(Z_i,Y_{i-1}\setminus Y_i)\\
  &\hskip-5mm=\:e(X_{i-1},V\setminus Y_{i-1})-e(Z_i,V\setminus Y_{i-1})+
  e(X_i,Y_{i-1}\setminus Y_i).
\end{align*}
Since $Y_i=Y_{i-1}^{X_i}$, every neighbour $u\in Y_{i-1}\setminus Y_i$ of a
vertex in~$X_i$ has exactly one neighbour in~$Z_i$ (see
Figure~\ref{fig:density}). Hence,
$e(X_i,Y_{i-1}\setminus Y_i)=e(Z_i,Y_{i-1}\setminus Y_{i-1}^{Z_i})$. So we
have
\[e(X_{i-1},V\setminus Y_{i-1})\:=\:e(X_i,V\setminus Y_i)+
e(Z_i,V\setminus Y_{i-1})-e(Z_i,Y_{i-1}\setminus Y_{i-1}^{Z_i}).\]
By the definition of~$Z_i$, we have
$e(Z_i,V\setminus Y_{i-1})>e(Z_i,Y_{i-1}\setminus Y_{i-1}^{Z_i})+
\zeta_S^*\,|Z_i|$. Combining the last two expressions gives
\[e(X_{i-1},V\setminus Y_{i-1})\:>\:e(X_i,V\setminus
Y_i)+\zeta_S^*\,|Z_i|.\]
Setting $Z^*=\bigcup\limits_{1\le i\le k}Z_i$, we have
$e(X_k,V\setminus Y_k)<e(X_0,V\setminus Y_0)-\zeta_S^*\,|Z^*|$. As a
consequence, using~\eqref{eq4},
\[|Z^*|\:<\:\frac{e(X_0,V\setminus Y_0)-e(X_k,V\setminus Y_k)}{\zeta_S^*}
\:\le\:\frac{e(X_0,V\setminus Y_0)}{\zeta_S^*}\:<\:
\frac{\zeta_S^*\,|X_0|}{\zeta_S^*}\:=\:|X_0|.\]
Since $X_k=X_0\setminus Z^*$, this implies $|X_k|>0$, which leads to
$X_k\ne\varnothing$.

Finally, let $v\in X_k\ne\varnothing$ and assume $Y_k=\varnothing$. Taking
$W=\{v\}$ in the inequality in~(S3) (which by construction is satisfied by
$(X_k,Y_k)$), we obtain $d(v)\le\zeta_S^*$. Since~$v$ is a big vertex,
$d(v)\ge\zeta_S^*+1$. This contradiction means that we must have
$Y_k\ne\varnothing$, which concludes the proof of Lemma~\ref{lem1}.\eop

\subsection{Proof of Lemma~\ref{lem-mp}}\label{proof-lem-mp}

We recall the hypotheses of the lemma: We have positive real
numbers~$\beta$ and $\zeta$; $H$ is a multigraph; each vertex~$v$ of~$H$
has an associated integer~$\sigma(v)$; and for each edge~$e$ a positive
real number~$b_e$ is given. In this subsection, all degrees~$d(v)$ are in
the multigraph~$H$.

The following three conditions are satisfied:{

  \smallskip
  \qiteee{(H1')}For all vertices~$v$ in~$H$, $d(v)\le\sigma(v)\le\beta$.

  \smallskip \qiteee{(H2')}For all edges $e=uv$ in~$H$,
  $b_e\ge\bigl(\frac32\,\beta+\frac92\,\zeta)-(\sigma(u)-d(u))-
  (\sigma(v)-d(v))$.

  \smallskip
  \qiteee{(H3')}For all non-empty subsets $W\subseteq V(H)$,
  $\sum\limits_{w\,\in\,W}(\sigma(w)-d(w))\le
  e_H(W,V(H)\setminus W)+\zeta\,|W|$.

}\medskip\noindent
In the proof that follows, we will show that the vector $\vec{x}=(x_e)$,
$x_e=1/b_e$, is in~$\MP(H)$.

For an edge $e=uv$ in~$H$, define
\begin{equation}\label{eq2}
  a_e\:=\:\bigl(\tfrac32\,\beta+\tfrac92\,\zeta\bigr)-(\sigma(u)-d(u))-
  (\sigma(v)-d(v))\qquad\text{and}\qquad y_e\:=\:\frac{1}{a_e}.
\end{equation}
We will in fact prove that the vector $\vec{y}=(y_e)$ is in the matching
polytope~$\MP(H)$. Since $b_e\ge a_e$, we have $x_e=1/b_e\le1/a_e=y_e$. So,
by Edmonds' characterisation of the matching polytope, if
$\vec{y}\in\MP(H)$, this guarantees that $\vec{x}\in\,\MP(H)$, as required.

Applying condition~(H3') to the set $W=\{v\}$ gives
$\sigma(v)-d(v)\le d(v)+\zeta$, which implies:{

  \smallskip
  \qitee{(a)}\emph{For all vertices $v\in V(H)$, we have
    $d(v)\ge\half\,(\sigma(v)-\zeta)$.}

}\smallskip\noindent
Let $e=uv$ be an edge of~$H$. If we use the estimate above for both~$u$
and~$v$ in the definition of~$a_e$ in~\eqref{eq2}, recalling that
$\sigma(u),\sigma(v)\le\beta$, we obtain
\[a_e\:\ge\:\tfrac32\,\beta+\tfrac92\,\zeta-\half\,\sigma(u)-
\half\,\sigma(v)-\zeta\:\ge\:\half\beta+\tfrac72\,\zeta.\]
On the other hand, if we use observation~(a) for~$u$ only, we get
\[a_e\:\ge\:d(v)+\tfrac32\,\beta+\tfrac92\,\zeta-\half\,\sigma(u)-
\sigma(v)-\half\,\zeta\:\ge\:d(v)+4\zeta.\]
Hence, the following two conclusions hold.{

  \smallskip
  \qitee{(b)}\emph{For all edges $e=uv$ in~$E(H)$, we have
    $a_e\ge d(v)+4\zeta$.}

  \smallskip
  \qitee{(c)}\emph{For all edges $e\in E(H)$, we have
    $a_e\ge\half\,\beta+\tfrac72\,\zeta$.}

}\smallskip
Note that observation~(c) also gives $b_e\ge a_e\ge\half\,\beta$ for all
$e\in E(H)$, as required.

By observation~(b), we find, since $\zeta>0$,
\[\sum_{e\,\ni\,v}\frac1{a_e}\:\le\:d(v)\cdot\frac1{d(v)+4\zeta}\:<\:1,\]
which shows that

\begin{claim}\label{condition-1}
  For all vertices $v\in V(H)$, we have
  $\displaystyle\sum_{e\,\ni\,v}y_e<1$.
\end{claim}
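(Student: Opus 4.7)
The plan is to invoke observation~(b) directly, which was established in the immediately preceding analysis. Fix any vertex $v\in V(H)$. For each edge $e=uv$ incident to $v$, observation~(b) gives $a_e \ge d(v)+4\zeta$. The key point is that this lower bound depends only on the endpoint $v$ and the global parameter $\zeta$, not on the other endpoint $u$, so reciprocating yields the uniform bound $y_e = 1/a_e \le 1/(d(v)+4\zeta)$ across every edge at $v$.

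Summing this uniform bound over the $d(v)$ edges at $v$ (counted with multiplicity, since $H$ is a multigraph) then gives
\[
\sum_{e\,\ni\,v} y_e \;\le\; \frac{d(v)}{d(v)+4\zeta},
\]
and the right-hand side is strictly less than $1$ because $\zeta > 0$. The isolated case $d(v)=0$ is vacuous, since the sum is empty.

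There is essentially no obstacle here: the claim is a one-line consequence of observation~(b), with the only bookkeeping being that multiedges are counted once each in both $d(v)$ and the sum. In the bigger picture, this estimate is the vertex-inequality half of Edmonds' characterisation (Theorem~\ref{edm-mp}) of the matching polytope; the genuinely harder task of verifying the odd-set inequalities for $\vec{y}$, which will rely on condition~(H3'), still lies ahead in the proof of Lemma~\ref{lem-mp}.
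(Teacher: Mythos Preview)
Your proof is correct and matches the paper's own argument essentially line for line: both invoke observation~(b) to bound each $y_e=1/a_e$ by $1/(d(v)+4\zeta)$ and then sum over the $d(v)$ incident edges to obtain $d(v)/(d(v)+4\zeta)<1$. The additional remarks you make about multiedges, the isolated case, and the role of this claim in Edmonds' characterisation are accurate but go slightly beyond what the paper records.
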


\noindent
Using Theorem~\ref{edm-mp}, all that remains is to prove that for all
$W\subseteq V(H)$ with $|W|\ge3$ and~$|W|$ odd, we have
$\sum\limits_{e\,\in\,E(W)}y_e\le\half\,(|W|-1)$. We will actually prove
this for all $|W|\ge3$. Note that we can certainly assume
$E(W)\ne\varnothing$.

\medskip
Using observation~(b), we infer that
\[\sum_{e\,\in\,E(W)}\frac1{a_e}\:\le\:
\half\sum_{u\,\in\,W}\frac{d_{H[W]}(u)}{d(u)+4\zeta}\:=\:
\half\sum_{u\,\in\,W}\Bigl(\frac{d(u)}{d(u)+4\zeta}-
\frac{d(u)-d_{H[W]}(u)}{d(u)+4\zeta}\Bigr).\]
Since $\dfrac{d(u)}{d(u)+4\zeta}\le\dfrac{\beta}{\beta+4\zeta}$ and
$\dfrac{d(u)-d_{H[W]}(u)}{d(u)+4\zeta}\ge
\dfrac{d(u)-d_{H[W]}(u)}{\beta+4\zeta}$, this implies
\[\sum_{e\,\in\,E(W)}\frac1{a_e}\:\le\:
\half\,|W|\,\frac{\beta}{\beta+4\zeta}
-\half\,\frac{e(W,W^c)}{\beta+4\zeta}.\]
Here we used that
$\sum\limits_{u\,\in\,W}\bigl(d(u)-d_{H[W]}(u)\bigr)=e(W,W^c)$, where
$W^c=V(H)\setminus W$.

If $e(W,W^c)\ge\beta$, we obtain, since $\zeta>0$,
\[\sum_{e\,\in\,E(W)}\!y_e\:\le\:
\half\,(|W|-1)\cdot\frac{\beta}{\beta+4\zeta}\:<\:\half\,(|W|-1).\]

So we can assume in the following that $e(W,W^c)\le\beta$, in which case
Condition~(H3') of Lemma~\ref{lem-mp} implies
\[\sum_{u\,\in\,W}(\sigma(u)-d(u))\:\le\:e(W,W^c)+\zeta\,|W|\:\le\:
\beta+\zeta\,|W|.\]
For a vertex~$u$ set $c(u)=\sigma(u)-d(u)$, and for a set of vertices~$U$
define $c(U)=\sum\limits_{u\,\in\,U}c(u)$. So we can write the inequality
above as $c(W)\le\beta+\zeta\,|W|$.

In the following we use the fact that all~$a_e$ are large enough to find a
bound for the sum $\sum\limits_{e\,\in\,E(W)}\!a_e^{-1}$. To this aim,
recall from~\eqref{eq2} that
$a_e=\bigl(\frac32\,\beta+\tfrac92\,\zeta\bigr)-c(u)-c(v)$ for all edges
$e=uv$ in~$H$. This gives
\[\sum_{e\,\in\,E(W)}\!a_e\:=\:
\bigl(\tfrac32\,\beta+\tfrac92\,\zeta\bigr)\,|E(W)|-
\sum_{u\,\in\,W}c(u)\,d_{H[W]}(u).\]
Since $d_{H[W]}(u)\le d(u)=\sigma(u)-c(u)\le\beta-c(u)$, we have
\[\sum_{e\,\in\,E(W)}\!a_e\:\ge\:\bigl(\tfrac32\,\beta+
\tfrac92\,\zeta\bigr)\,|E(W)| -\beta\,c(W)+\sum_{u\,\in\,W}c(u)^2.\]

Set $q=\frac32\,\beta+\tfrac92\,\zeta$ and
$p=\min\limits_{uv\in E(W)} \bigl\{q-c(u)-c(v)\bigr\}$. This means that
$q-p=\max\limits_{uv\in E(W)}\bigl\{c(u)+c(v)\bigr\}$. Let $e=uv$ be an
edge in~$E(W)$ so that $c(u)+c(v)=q-p$. Then
$c(u)^2+c(v)^2\ge\half\,(q-p)^2$, and hence we can be sure that
\[\sum_{e\,\in\,E(W)}\!a_e\:\ge\:q\,|E(W)|-\beta\,c(W)+\half\,(q-p)^2.\]

We now use this inequality and the following claim to bound
$\sum\limits_{e\,\in\,E(W)}\!a_e^{-1}$.

\begin{claim}\label{estimate}
  Let $r_1,\ldots,r_m$ be~$m$ real numbers such that
  $0<p\le r_1,\dots,r_m\le q$ and
  $\sum\limits_{1\,\le\,i\,\le\,m}\!r_i\ge q\,m-(q-p)\,S$, for some
  $S\ge0$. Then we have
  $\sum\limits_{1\,\le\,i\,\le\,m}\!r_i^{-1}\le\dfrac{S}p+\dfrac{m-S}q$.
\end{claim}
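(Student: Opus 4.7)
The plan is to exploit the convexity of the function $f(r) = 1/r$ on the interval $[p,q]$ to linearly dominate each term $1/r_i$, then sum and use the given lower bound on $\sum_i r_i$.

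Since $f(r) = 1/r$ is convex on $(0,\infty)$, for any $r \in [p,q]$ it lies below the chord joining $(p, 1/p)$ and $(q, 1/q)$. Explicitly, this gives
\[
\frac{1}{r} \:\le\: \frac{q-r}{q-p}\cdot\frac{1}{p} + \frac{r-p}{q-p}\cdot\frac{1}{q}
\qquad\text{for all $r \in [p,q]$.}
\]
One checks this is an equality at the endpoints, and convexity of $1/r$ gives the inequality in between.

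Applying this to each $r_i$ and summing, I would set $T = \sum_{i=1}^m r_i$ and combine the resulting terms over a common denominator to obtain
\[
\sum_{i=1}^{m}\frac{1}{r_i} \:\le\: \frac{qm - T}{(q-p)\,p} + \frac{T - pm}{(q-p)\,q} \:=\: \frac{(p+q)\,m - T}{p\,q}.
\]
The right-hand side is a decreasing linear function of $T$, so the hypothesis $T \ge qm - (q-p)\,S$ yields the largest upper bound when $T$ is replaced by $qm - (q-p)\,S$. Substituting this value and simplifying gives
\[
\sum_{i=1}^{m}\frac{1}{r_i} \:\le\: \frac{(p+q)\,m - qm + (q-p)\,S}{p\,q} \:=\: \frac{pm + (q-p)\,S}{p\,q} \:=\: \frac{S}{p} + \frac{m-S}{q},
\]
which is exactly the desired bound.

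There is no real obstacle here: the only point to be careful about is the direction in which the worst case occurs, namely that decreasing $T$ (equivalently, making the $r_i$'s closer to $p$ on average) can only increase $\sum 1/r_i$, which is consistent with the convexity intuition. The hypothesis $S \ge 0$ together with $p \le q$ ensures the estimate is meaningful, and no assumption that the $r_i$ take only the extreme values $p$ and $q$ is needed.
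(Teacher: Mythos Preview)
Your proof is correct and is essentially the same as the paper's: both bound each $1/r_i$ by the chord of the convex function $1/r$ over $[p,q]$, then sum and use the hypothesis on $\sum r_i$. The only cosmetic differences are that the paper introduces the convex-combination coefficients $c_i=(q-r_i)/(q-p)$ explicitly (and disposes of the trivial case $p=q$ separately, which you should also note since your chord formula divides by $q-p$), whereas you work directly with $T=\sum r_i$; the underlying argument is identical.
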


\begin{trivlist}\item[]{\bf Proof}\mbox{ \ }The result is trivial if $p=q$,
  so suppose $p<q$. For any $1\le i\le m$, set $c_i=\dfrac{q-r_i}{q-p}$.
  Now we have $0\le c_i\le1$ for all $1\le i\le m$, and
  $\sum\limits_{1\,\le\,i\,\le\,m}\!c_i\le S$. Since the function
  $x\mapsto\dfrac1x$ is convex, we have that for $1\le i\le m$,
  \[\frac1{r_i}\:=\:\frac1{q-c_i\,(q-p)}\:=\:\frac1{c_i\,p+(1-c_i)\,q}
  \:\le\:c_i\,\frac1p+(1-c_i)\,\frac1q\:=\:
  c_i\,\Bigl(\frac1p-\frac1q\Bigr)+\frac1q.\]
  As a consequence,
  \[\sum_{1\,\le\,i\,\le\,m}\frac1{r_i}\:\le\:\Bigl(\frac1p-\frac1q\Bigr)
  \sum_{1\,\le\,i\,\le\,m}c_i+\frac{m}{q}\:\le\:
  \Bigl(\frac1p-\frac1q\Bigr)\,S+\frac{m}{q}\:\le\:\frac{S}p+\frac{m-S}q.\]

  \vspace{-7mm}\qquad\hspace*{\fill}$\Box$

  \vspace{5mm}\end{trivlist}

\noindent
We set $R=\beta\,c(W)-\half\,(q-p)^2$ and $S=\dfrac{R}{q-p}$. Using
Claim~\ref{estimate}, at this point we have
\[\sum_{e\,\in\,E(W)}\frac1{a_e}\:\le\:\frac{S}p+\frac{|E(W)|-S}{q}\:=\:
\frac{S\,(q-p)}{pq}+\frac{|E(W)|}{q}\:=\:
\frac{R}{pq}+\frac{2\,|E(W)|}{3\beta+9\zeta}.\]
Notice that by condition~(H3') of Lemma~\ref{lem-mp},
$2\,|E(W)|\le\sum\limits_{u\,\in\,W}\sigma(u)-2c(W)+\zeta\,|W|\le
\beta\,|W|-2c(W)+\zeta\,|W|$. Hence we find
\begin{equation}\label{eq3}
  \sum_{e\,\in\,E(W)}\frac1{a_e}\:\le\:
  \frac{\beta\,|W|}{3\beta+9\zeta}+\frac{R}{pq}-
  \frac{2c(W)}{3\beta+9\zeta}+\frac{\zeta\,|W|}{3\beta+9\zeta}.
\end{equation}

\begin{claim}\label{Rpq}
  We have
  $\dfrac{R}{pq}-\dfrac{2c(W)}{3\beta+9\zeta}+
  \dfrac{\zeta\,|W|}{3\beta+9\zeta}\le\dfrac{\zeta}{\beta+3\zeta}\,|W|$.
\end{claim}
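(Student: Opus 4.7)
The plan is to first simplify the stated inequality into a cleaner algebraic form and then dispatch it with a short case analysis.

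Observing that $3\beta+9\zeta = 2q$ and $\dfrac{\zeta}{\beta+3\zeta} = \dfrac{3\zeta}{2q}$, multiplying through by~$2q$ shows that Claim~\ref{Rpq} is equivalent to $\dfrac{R}{pq} \le \dfrac{c(W)+\zeta|W|}{q}$, i.e.\ to $R \le p\,(c(W)+\zeta|W|)$. Substituting the definition $R = \beta\,c(W) - \half(q-p)^2$ and rearranging turns this into the equivalent form
\[
(\beta-p)\,c(W) - \half(q-p)^2 \:\le\: p\,\zeta\,|W|.
\]

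I would then split on the sign of $\beta-p$. If $p\ge\beta$, the left-hand side is a sum of two non-positive terms while the right-hand side is non-negative, so the inequality is immediate.

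The remaining case is $p < \beta$. Here I use the earlier bound $c(W) \le \beta+\zeta|W|$ (obtained from condition~(H3') together with the standing assumption $e(W,W^c)\le\beta$) to replace $c(W)$ on the left-hand side; this is valid since $\beta-p>0$. Writing $\alpha = q-p$ and using $q = \tfrac32\beta+\tfrac92\zeta$, a brief computation yields the identity
\[
(\beta-p)\,\beta - \half\alpha^2 \:=\: -\half(\alpha-\beta)^2 - \tfrac92\,\zeta\,\beta,
\]
so that after cancelling the $(\beta-p)\zeta|W|$ contribution against part of $p\zeta|W|$, the inequality reduces to
\[
-\half(\alpha-\beta)^2 - \tfrac92\,\zeta\,\beta \:\le\: (2p-\beta)\,\zeta\,|W|.
\]
The left-hand side is manifestly non-positive; the right-hand side is non-negative because observation~(c) above gives $p\ge\half\beta+\tfrac72\zeta$, hence $2p-\beta\ge 7\zeta>0$.

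The main obstacle is identifying which bound on~$p$ is needed to close the second case. A more naive attempt using only $\alpha\le c(W)$ (the bound one gets from $\alpha = c(u^*)+c(v^*)$ with $u^*,v^*\in W$) and no lower bound on~$p$ produces a coefficient of $|W|$ on the right that can be non-positive, causing the inequality to fail for large~$|W|$. It is precisely the extra slack of $\tfrac72\zeta$ in observation~(c) that yields the decisive factor~$7\zeta$ on the right-hand side of the reduced inequality.
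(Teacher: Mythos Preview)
Your proof is correct and follows essentially the same route as the paper's: both reduce the claim to $\frac{2R}{p}-2c(W)\le 2\zeta|W|$, split on the sign of $\beta-p$, apply $c(W)\le\beta+\zeta|W|$ in the case $p<\beta$, and finish by completing the square (the paper writes $q-p=\alpha\beta$ and obtains $-(\alpha-1)^2\beta-9\zeta$, which is your identity rescaled). One small inaccuracy is in your closing commentary: the extra $\tfrac72\zeta$ slack is not actually decisive, since the paper shows that the weaker bound $p\ge\half\beta$ (giving $2p-\beta\ge0$, or equivalently $(\beta-p)/p\le1$) already suffices.
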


\begin{proof}
  Since $q=\frac32\,\beta+\tfrac92\,\zeta$, we only have to prove that
  $\dfrac{2R}p-2c(W)\le2\zeta\,|W|$.

  Let us write $q-p=\alpha\beta$, and so
  $p=\half\,(3-2\alpha)\,\beta+\tfrac92\,\zeta$ and
  $R=\beta\,c(W)-\half\,\alpha^2\,\beta^2$. We have
  \[\frac{2R}{p}-2c(W)\:=\:
  \frac{2\beta\,c(W)}p-\frac{\alpha^2\,\beta^2}p-2c(W).\]

  If $p\ge\beta$, this expression is negative, so we can assume that
  $p<\beta$. In this case, using that $c(W)\le\beta+\zeta\,|W|$, we have
  \begin{align*}
    \frac{2R}{p}-2c(W)&\:=\:
    \frac{2\beta\,c(W)}p-\frac{\alpha^2\,\beta^2}p-2c(W)\\
    &\:=\:2c(W)\,\frac{\beta-p}p-\frac{\alpha^2\,\beta^2}p\:\le\:
    \frac{\beta}p\,(2\beta-2p-\alpha^2\,\beta)+
    2\zeta\,|W|\,\frac{\beta-p}p.
  \end{align*}
  As $2p=(3-2\alpha)\,\beta+9\zeta$, we have
  $2\beta-2p-\alpha^2\,\beta=(-1+2\alpha-\alpha^2)\,\beta-9\zeta=
  -(\alpha-1)^2\,\beta-9\zeta<0$. Since $p=a_e$ for some edge~$e$, we have
  $p\ge\half\,\beta$ by observation~(c). Hence, $(\beta-p)/p\le1$ and we
  can conclude that $2R/p-2c(W)\le2\zeta\,|W|$, which completes the proof
  of the claim.
\end{proof}

\noindent
Combining~\eqref{eq3} and Claim~\ref{Rpq}, we obtain
\[\sum_{e\,\in\,E(W)}\!y_e\:=\:\sum_{e\,\in\,E(W)}\frac{1}{a_e}\:\le\:
\frac{\beta\,|W|}{3\beta+9\zeta}+\frac{\zeta\,|W|}{\beta+3\zeta}\:=\:
\frac{\beta+3\zeta}{3\beta+9\zeta}\,|W|\:=\:\tfrac13\,|W|.\]
Since $|W|\ge3$, we have $\frac13\,|W|\le\half\,(|W|-1)$, which completes
the proof of the lemma.\eop

\section{Proof of Theorem~\ref{th2}}\label{proof-cl}

We use the notation and terminology from Section~\ref{proof}.

We start similarly to the proof of Theorem~\ref{th1} in
Subsection~\ref{sec2.1}. Suppose Theorem~\ref{th2} is false. Then there
exists a surface~$S$ such that for any $\beta_S,\gamma_S$ we can find
$\beta\ge\beta_S$ and a graph~$G$, with a $\Sigma$-system of width at most
$\beta$, such that $\omega(G;\Sigma)>\tfrac32\, \beta+\gamma_S$. Let
$\zeta_S^*=132\,(3-\chi(S))$ be as given in Lemma~\ref{lem1}. We take
$\zeta_S=\zeta_S^*$, $\beta_S=\frac23\,(\zeta_S^*)^2=11616\,(3-\chi(S))^2$,
and $\gamma_S=\half\,\zeta_S^*+10=208-66\,\chi(S)$. Note that
$\chi(S)\le2$, so $\beta_S\ge11616$.

By assumption, there exist $\beta\ge\beta_S$ and a graph~$G$, with a
$\Sigma$-system of width at most~$\beta$, containing a $\Sigma$-clique
having more than $\tfrac32\,\beta+\gamma_S$ vertices. Choose such graph~$G$
with the minimum number of vertices, and, with respect to that, with the
maximum number of edges.

Similarly as in the proof of Theorem~\ref{th1}, we can assume~$G$ is
connected, has at least 17424 vertices, and is edge-maximal with respect to
being embeddable in~$S$. By Lemma~\ref{lem-1} we get that each vertex has
degree at least three.

The following is an easy observation.

\begin{claim}\label{cl5}
  For any vertex~$v$, every $\Sigma$-clique containing~$v$ has size at most
  $1+\ds(v)$.
\end{claim}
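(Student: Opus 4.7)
The plan is to unfold the definitions. Recall that a $\Sigma$-clique is a subset $C \subseteq V$ in which every two distinct vertices $u,w \in C$ either are adjacent in $G$ or appear together in some $\Sigma(t)$. Meanwhile, a $\Sigma$-neighbour of $v$ is defined as a vertex $u \ne v$ such that either $uv \in E(G)$, or there exists $t \in V$ with $u,v \in \Sigma(t)$. Comparing these two conditions shows that they coincide when one of the two vertices of the pair is fixed to be $v$.

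So I would simply argue as follows. Let $C$ be a $\Sigma$-clique containing $v$, and let $u \in C \setminus \{v\}$. By the defining property of a $\Sigma$-clique applied to the pair $\{u,v\}$, either $u$ and $v$ are adjacent, or there is some $t \in V$ with $u,v \in \Sigma(t)$. In either case, $u$ is a $\Sigma$-neighbour of $v$. Hence $C \setminus \{v\}$ is a subset of the set of $\Sigma$-neighbours of $v$, which has cardinality $d^\Sigma(v)$. Therefore $|C| \le 1 + d^\Sigma(v)$, as claimed.

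There is no real obstacle here: the statement is a direct consequence of matching up the two definitions, and no structural property of $G$ (planarity, embedding, edge-maximality, etc.) is used. The value of the claim lies in how it will be applied in the sequel, namely to combine the bound $d^\Sigma(v) \le d(v) + \sum_{t:\,v \in \Sigma(t)} (\sigma(t)-1)$ with structural information from Lemma~\ref{lem1} in order to bound $\omega(G;\Sigma)$ by $\tfrac32\,\beta + \gamma_S$.
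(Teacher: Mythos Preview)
Your proof is correct and is exactly the intended argument: the paper itself simply calls this ``an easy observation'' without writing out a proof, and your unfolding of the definitions of $\Sigma$-clique and $\Sigma$-neighbour is precisely what justifies it.
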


\noindent
Next we prove the following.

\begin{claim}\label{cl6}
  Let adjacent vertices~$v,u$ satisfy $d(v)\le5$ and $d(u)\le\zeta_S$.
  Then~$v$ is in every $\Sigma$-clique of size larger than
  $\frac32\,\beta+\gamma_S$, and $\ds(v)\ge\frac32\,\beta+\gamma_S$.
\end{claim}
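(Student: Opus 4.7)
The plan is to mimic the contraction argument used for structure~(S2) in Subsection~\ref{ss-S2}. I assume for contradiction that $C\subseteq V(G)$ is a $\Sigma$-clique with $|C|>\tfrac32\,\beta+\gamma_S$ and $v\notin C$, and from this I produce a strictly smaller counterexample to Theorem~\ref{th2}, contradicting the choice of~$G$.

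First, I contract the edge~$uv$ into a single new vertex~$w$, obtaining a graph~$G'$ on vertex set $V(G')=(V(G)\setminus\{u,v\})\cup\{w\}$. Since~$G'$ is a minor of~$G$, it is still embeddable in~$S$, and $|V(G')|<|V(G)|$. I equip~$G'$ with a $\Sigma$-system~$\Sigma'$ exactly as in Subsection~\ref{ss-S2}: set $\Sigma'(w)=(\Sigma(u)\cup\Sigma(v))\setminus\{u,v\}$, and for $t\in V(G')\setminus\{w\}$ set $\Sigma'(t)=(\Sigma(t)\setminus\{u,v\})\cup\{w\}$ whenever $\Sigma(t)$ meets $\{u,v\}$, and $\Sigma'(t)=\Sigma(t)$ otherwise. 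These substitutions do not increase any set size, so $|\Sigma'(t)|\le\beta$ for all $t\ne w$, while $\Sigma'(w)\subseteq(N_G(u)\cup N_G(v))\setminus\{u,v\}$ gives $|\Sigma'(w)|\le d(u)+d(v)\le\zeta_S+5\le\beta$ (using $\beta\ge\beta_S=\tfrac23\,(\zeta_S^*)^2$). This is precisely the step where both hypotheses $d(v)\le5$ and $d(u)\le\zeta_S$ are used.

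Second, I transfer the clique~$C$ to~$G'$: set $C'=C$ if $u\notin C$, and $C'=(C\setminus\{u\})\cup\{w\}$ if $u\in C$. In either case $C'\subseteq V(G')$ (since $v\notin C$) and $|C'|=|C|>\tfrac32\,\beta+\gamma_S$. A short case analysis shows that $C'$ is a $\Sigma'$-clique. For a pair $a,b\in C\setminus\{u\}$, either $ab\in E(G)$ (hence $ab\in E(G')$), or some $t\in V$ satisfies $a,b\in\Sigma(t)$; if $t\in\{u,v\}$ then $a,b\in\Sigma'(w)$ by construction, and if $t\notin\{u,v\}$ then $a,b\in\Sigma'(t)$. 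For a pair $w,a$ with $a\in C\setminus\{u\}$ (only relevant when $u\in C$), the original pair $u,a$ is either joined by an edge of~$G$, giving $wa\in E(G')$, or shares a $\Sigma$-witness~$t$; if $t\in\{u,v\}$ then $a\in N_G(t)\subseteq N_{G'}(w)$, while if $t\notin\{u,v\}$ then $a,w\in\Sigma'(t)$, since the construction inserts~$w$ into $\Sigma'(t)$ exactly when $\Sigma(t)$ originally contained~$u$.

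Thus $G'$ is a strictly smaller counterexample to Theorem~\ref{th2} than~$G$, contradicting minimality. Hence $v$ lies in every $\Sigma$-clique of~$G$ of size larger than $\tfrac32\,\beta+\gamma_S$, and in particular in the $\Sigma$-clique~$C$ witnessing that~$G$ is a counterexample. Applying Claim~\ref{cl5} to~$C$ at~$v$ then yields $\ds(v)\ge|C|-1\ge\tfrac32\,\beta+\gamma_S$, as claimed. The one place requiring real care is the second step, namely checking that no pair in~$C'$ loses its $\Sigma'$-witness after the identification of $u$ and~$v$; the enlargement of $\Sigma'(w)$ is exactly engineered to make this verification go through.
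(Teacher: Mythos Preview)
Your proof is correct and follows essentially the same approach as the paper: contract~$uv$ into~$w$, define the new $\Sigma$-system exactly as in Subsection~\ref{ss-S2}, check that the width bound~$\beta$ is preserved (using $d(u)+d(v)\le\zeta_S+5\le\beta$), and observe that any large $\Sigma$-clique avoiding~$v$ survives in the contracted graph, contradicting minimality. The paper's proof is terser---it simply asserts that ``every $\Sigma$-clique in~$G$ not containing~$v$ corresponds to a $\Sigma_2$-clique in~$G_2$ of the same size''---whereas you spell out the case analysis for the clique transfer explicitly; but the argument is the same.
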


\begin{proof}
  The argument is similar to the one in Subsection~\ref{ss-S2}: Construct a
  graph~$G_2$ by contracting the edge~$vu$ into a new vertex~$w$ (remove
  multiple edges if they appear). Set $V_2=(V\setminus\{v,u\})\cup\{w\}$.
  Let $\Sigma_2(w)=\Sigma(u)\cup\Sigma(v)\setminus\{u,v\}$. For a vertex
  $t\in V_2\setminus\{w\}$, if $\Sigma(t)$ contains~$u$, then set
  $\Sigma_2(t)=(\Sigma(t)\setminus\{u,v\})\cup\{w\}$; otherwise, set
  $\Sigma_2(t)=\Sigma(t)\setminus\{v\}$. Note that~$G_2$ is smaller
  than~$G$ and is still embeddable in~$S$. Moreover, for every
  $t\in V_2\setminus\{w\}$, we have $|\Sigma_2(t)|\le|\Sigma(t)|\le\beta$;
  while for~$w$ we have $|\Sigma_2(w)|\le|\Sigma(u)|+|\Sigma(v)|\le
  d_G(u)+d_G(v)\le5+\zeta_S\le\beta$.

  By construction, it is easy to check that every $\Sigma$-clique in~$G$
  not containing~$v$ corresponds to a $\Sigma_2$-clique in~$G_2$ of the
  same size. Since~$G$ was chosen as a smallest counterexample, this means
  that every $\Sigma$-clique in~$G$ of size larger than
  $\frac32\,\beta+\gamma_S$ must contain~$v$.

  For the second part we use that~$G$, as a counterexample, must contain
  $\Sigma$-cliques larger than $\frac32\,\beta+\gamma_S$, whereas any
  $\Sigma$-clique in~$G$ containing~$v$ has size at most $1+\ds(v)$.
\end{proof}

\noindent
We continue going through the cases of Lemma~\ref{lem1}. If all vertices
of~$G$ have degree at most~$\zeta_S$, then the number of
$\Sigma$-neighbours of any vertex is at most~$(\zeta_S)^2$. So the maximum
size of a $\Sigma$-clique is at most
$(\zeta_S)^2+1\le\frac32\,\beta+1\le\frac32\,\beta+\gamma_S$, a
contradiction.

Next suppose there is a vertex~$v$ of degree at most five with at most one
neighbour of degree more than~$\zeta_S$. Then, since
$\beta\ge\tfrac23\,(\zeta_S^*)^2\ge8\zeta_S^*$, we have
\[\ds(v)\:\le\:d(v)+\!\!\!
\sum_{t\,\in\,N(v),\;v\,\in\,\Sigma(t)}\!\!\!(|\Sigma(t)|-1)\:\le\:
5+4\,(\zeta_S^*-1)+(\beta-1)\:=\:4\zeta_S^*+\beta\:\le\:\tfrac32\,\beta.\]
But every vertex has degree at least three, hence~$v$ has a neighbour~$u$
of degree at most~$\zeta_S^*$. We obtain a contradiction with
Claim~\ref{cl6}.

\medskip
Let~$X$ and~$Y $ be the two disjoint, non-empty, sets forming a very
special $\zeta_S^*$-pair in~$G$ satisfying~(S3) in Lemma~\ref{lem1}. For
convenience, we repeat the essential properties of those sets:{

  \smallskip
  \qitee{(i)}\emph{Every vertex in~$X$ has degree at least $\zeta_S^*+1$.
    Every vertex $y\in Y$ has degree four, is adjacent to exactly two
    vertices of~$X$, and the remaining neighbours of~$y$ have degree four
    as well.}

  \smallskip
  \qitee{(ii)}\emph{For all pairs of vertices $y,z\in Y$, if~$y$ and~$z$
    are adjacent or have a common neighbour $w\notin X$, then $X^y=X^z$.}

  \smallskip
  \qitee{(iii)}\emph{For all non-empty subsets $W\subseteq X$, we have
    $e(W,V\setminus Y)\le e(W,Y\setminus Y^W)+\zeta_S^*\,|W|$.}

}\smallskip\noindent
We can remove from~$X$ any vertex not adjacent to any vertex in~$Y$.

We can use arguments similar to the first part of Subsection~\ref{ss-S3} to
show the following.

\begin{claim}\label{cl2a}
  For all $y\in Y$, we have that if $X^y=\{x_1,x_2\}$, then
  $y\in\Sigma(x_1)\cap\Sigma(x_2)$.
\end{claim}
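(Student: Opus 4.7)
My approach will parallel the proof of Claim~\ref{cl2} from Subsection~\ref{ss-S3}, but I will replace the colouring-extension step with the structural statement Claim~\ref{cl6} that has already been set up for the clique version. By the symmetry between $x_1$ and $x_2$ in the definition of a special pair, it suffices to show that $y\in\Sigma(x_1)$, so I would assume for contradiction that $y\notin\Sigma(x_1)$ and derive a contradiction.

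The first step is an upper bound on $\ds(y)$. Since $(X,Y)$ is a special $\zeta_S^*$-pair, $y$ has degree four, is adjacent to the two vertices $x_1,x_2$ of~$X$, and has two further neighbours $u_1,u_2$, each of degree four. Any vertex $t$ with $y\in\Sigma(t)$ must be a neighbour of $y$, hence lies in $\{x_1,x_2,u_1,u_2\}$. Under the hypothesis $y\notin\Sigma(x_1)$, and using $\sigma(u_i)\le d(u_i)=4$ and $\sigma(x_2)\le\beta$, this yields
\[\ds(y)\:\le\:d(y)+(\sigma(x_2)-1)+(\sigma(u_1)-1)+(\sigma(u_2)-1)\:\le\:
4+(\beta-1)+3+3\:=\:\beta+9.\]

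For the matching lower bound, I would apply Claim~\ref{cl6} with $v:=y$ and neighbour $u:=u_1$. The hypotheses are in place: $d(y)=4\le5$ and $d(u_1)=4\le\zeta_S$. The claim therefore gives $\ds(y)\ge\frac32\,\beta+\gamma_S=\frac32\,\beta+\tfrac12\,\zeta_S^*+10$. Combining the two bounds forces
\[\beta+9\:\ge\:\tfrac32\,\beta+\tfrac12\,\zeta_S^*+10,\]
which is impossible since $\beta,\zeta_S^*>0$. This contradiction completes the proof of $y\in\Sigma(x_1)$; the argument for $y\in\Sigma(x_2)$ is identical.

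I do not foresee any genuine obstacle: the constants $\gamma_S=\tfrac12\,\zeta_S^*+10$ and $\zeta_S=\zeta_S^*$ chosen at the start of Section~\ref{proof-cl} were calibrated precisely so that the ``generic'' bound $\ds(y)\le\beta+9$, obtained from the structural data of the special pair, is strictly dominated by the lower bound coming from Claim~\ref{cl6}. The only point requiring a modicum of care is making sure that the low-degree witness $u_1$ used in Claim~\ref{cl6} lies in the neighbourhood of $y$ and is indeed small; both are immediate from property~(i) of special pairs.
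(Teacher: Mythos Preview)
Your proof is correct and is essentially the argument the paper intends: it says only that ``arguments similar to the first part of Subsection~\ref{ss-S3}'' give the claim, and in the clique setting the contraction step from that subsection is exactly what has been packaged into Claim~\ref{cl6}. Your bound $\ds(y)\le\beta+9$ matches the paper's computation in Claim~\ref{cl2}, and invoking Claim~\ref{cl6} with the degree-four neighbour $u_1$ is precisely the intended replacement for the colouring-extension step.
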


\noindent
Next, by~(i), every $y\in Y$ has degree four and a neighbour~$u$ of degree
four. From Claim~\ref{cl6} we can conclude:

\begin{claim}\label{cl7}
  For every $y\in Y$, we have that~$y$ is in every $\Sigma$-clique of size
  larger than $\frac32\,\beta+\gamma_S$, and
  $\ds(y)\ge\frac32\,\beta+\gamma_S$.
\end{claim}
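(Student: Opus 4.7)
The plan is to obtain Claim~\ref{cl7} as an immediate consequence of Claim~\ref{cl6}, which has already been established. Recall that Claim~\ref{cl6} asserts: if $v,u$ are adjacent with $d(v)\le 5$ and $d(u)\le\zeta_S$, then $v$ lies in every $\Sigma$-clique of size exceeding $\tfrac32\,\beta+\gamma_S$, and moreover $\ds(v)\ge\tfrac32\,\beta+\gamma_S$. So the only task is to verify that every vertex $y\in Y$ and some neighbour of it meet the hypotheses of Claim~\ref{cl6}.

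Fix any $y\in Y$. By property~(i) of the very special $\zeta_S^*$-pair $(X,Y)$ recalled just above, $y$ has degree exactly four, so $d(y)=4\le 5$. Again by~(i), the two neighbours of $y$ that do not lie in $X$ themselves have degree four. Let $u$ be any one of these non-$X$ neighbours of~$y$; then $d(u)=4$. Since $\zeta_S=\zeta_S^*=132\,(3-\chi(S))\ge 132\ge 4$, we have $d(u)\le\zeta_S$. Thus the pair $(v,u)=(y,u)$ satisfies the hypotheses of Claim~\ref{cl6}.

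Applying Claim~\ref{cl6} to this pair directly yields both conclusions of Claim~\ref{cl7}: that $y$ belongs to every $\Sigma$-clique of size larger than $\tfrac32\,\beta+\gamma_S$, and that $\ds(y)\ge\tfrac32\,\beta+\gamma_S$. There is no real obstacle here; the content of the claim is simply that the structural guarantees provided by being a very special pair (degree four and a degree-four neighbour outside $X$) are exactly what is needed to feed into Claim~\ref{cl6}. The work has already been done in setting up the contraction argument in the proof of Claim~\ref{cl6}.
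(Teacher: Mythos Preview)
Your proof is correct and follows essentially the same route as the paper: the paper simply observes that by property~(i) every $y\in Y$ has degree four and a neighbour~$u$ of degree four, and then invokes Claim~\ref{cl6}. Your argument spells out the same reasoning with a little more detail (including the check that $4\le\zeta_S$), but there is no substantive difference.
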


\noindent
Also by the properties of the vertices in~$Y$ according to~(i) and~(ii), we
have for all $y\in Y$ and $X^y=\{x_1,x_2\}$,
\begin{align*}
  \ds(y)\:&\le\:4+2\cdot(4-1)+|\Sigma(x_1)\setminus\{y\}|+
  |\Sigma(x_2)\setminus\{y\}|-|Y^{\{x_1,x_2\}}\setminus\{y\}|\\
  &=\:9+|\Sigma(x_1)|+|\Sigma(x_2)|-|Y^{\{x_1,x_2\}}|
\end{align*}
Here we use that by Claim~\ref{cl2a} all vertices in $Y^{\{x_1,x_2\}}$ are
contained in both $\Sigma(x_1)$ and $\Sigma(x_2)$; hence we can subtract
the term $|Y^{\{x_1,x_2\}}\setminus\{y\}|$, since these vertices are
counted twice in $|\Sigma(x_1)\setminus\{y\}|+|\Sigma(x_2)\setminus\{y\}|$.
Since $|\Sigma(x_1)|,|\Sigma(x_2)|\le\beta$, from Claim~\ref{cl7} we can
conclude the following.

\begin{claim}\label{cl8}
  For every pair $x_1,x_2\in X$ for which there is a $y\in Y$ with
  $X^y=\{x_1,x_2\}$, we have $|Y^{\{x_1,x_2\}}|\le\half\,\beta-\gamma_S+9$.
\end{claim}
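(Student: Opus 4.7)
The proof is essentially set up by the displayed inequality that immediately precedes the statement of Claim~\ref{cl8}. My plan is simply to chain together three facts that have already been established, so this is a short calculation rather than a new argument.

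First I would take any $y\in Y$ with $X^y=\{x_1,x_2\}$ (such a $y$ exists by the hypothesis of the claim). By the paragraph preceding the claim, properties~(i) and~(ii) of very special $\zeta_S^*$-pairs together with Claim~\ref{cl2a} yield
\[
\ds(y)\:\le\:9+|\Sigma(x_1)|+|\Sigma(x_2)|-|Y^{\{x_1,x_2\}}|,
\]
where the subtraction accounts for the double counting of vertices in $Y^{\{x_1,x_2\}}$ (which, by Claim~\ref{cl2a}, sit in both $\Sigma(x_1)$ and $\Sigma(x_2)$).

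Next I would apply Claim~\ref{cl7} to $y$, which (since $y\in Y$) gives $\ds(y)\ge\frac32\,\beta+\gamma_S$. Substituting this lower bound into the inequality above and rearranging produces
\[
|Y^{\{x_1,x_2\}}|\:\le\:|\Sigma(x_1)|+|\Sigma(x_2)|+9-\tfrac32\,\beta-\gamma_S.
\]
Finally, I would use the width assumption $|\Sigma(x_i)|\le\beta$ for $i=1,2$ (since the $\Sigma$-system on $G$ has width at most $\beta$) to bound the right-hand side by $2\beta+9-\frac32\,\beta-\gamma_S=\half\,\beta-\gamma_S+9$, completing the proof.

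There is no real obstacle: the only non-trivial ingredient is Claim~\ref{cl7}, which relies on the fact that every $y\in Y$ has a neighbour of degree four (via property~(i) of the very special pair) so that Claim~\ref{cl6} can be invoked. Everything else is bookkeeping already done in the paragraph preceding the claim.
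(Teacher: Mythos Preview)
Your proposal is correct and follows exactly the argument the paper gives: combine the displayed bound $\ds(y)\le9+|\Sigma(x_1)|+|\Sigma(x_2)|-|Y^{\{x_1,x_2\}}|$ with the lower bound $\ds(y)\ge\tfrac32\,\beta+\gamma_S$ from Claim~\ref{cl7} and the width estimate $|\Sigma(x_i)|\le\beta$. There is nothing to add.
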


\noindent
Since every vertex in~$Y$ is in every $\Sigma$-clique of size larger than
$\frac32\,\beta+\gamma_S$, and by the hypothesis there is at least one such
clique, we must have that all pairs of vertices in~$Y$ are adjacent or
appear together in some $\Sigma(v)$. By~(ii), this proves that for every
two vertices $y_1,y_2\in Y$, we have $X^{y_1}\cap X^{y_2}\ne\varnothing$.
As a consequence, if~$G_X$ denotes the graph with vertex set~$X$ in which
two vertices are adjacent if they have a common neighbour in~$Y$,
then~$G_X$ is either a triangle or a star. (Here we use that we can assume
all vertices in~$X$ to have at least one neighbour in~$Y$.)

\medskip\noindent
\textbf{Case 1}.\quad$G_X$ is a triangle.\\*
Let $X=\{x_1,x_2,x_3\}$. This means that
$Y=Y^{\{x_1,x_2\}}\cup Y^{\{x_1,x_3\}}\cup Y^{\{x_2,x_3\}}$, and so by
Claim~\ref{cl8} we get $|Y|\le\frac32\,\beta-3\gamma_S+27$.

Since $Y^X=Y$ by definition of~$X$, we have $e(X,Y\setminus Y^X)=0$. So
using the inequality in~(iii) with $W=X$ leads to
$e(X,V\setminus Y)\le3\zeta_S^*$. That means there must be~$x_{j_1}$
and~$x_{j_2}$ such that $e(\{x_{j_1},x_{j_2}\},V\setminus Y)\le2\zeta_S^*$.
And so for $y\in Y^{\{x_{j_1},x_{j_2}\}}$, we can estimate, using~(i) and
$|X|=3$,
\begin{align*}
  \ds(y)\:&\le\:2+2\cdot(4-1)+|X|+(|Y|-1)+
  e(\{x_{j_1},x_{j_2}\},V\setminus(X\cup Y))\\
  &\le\:\tfrac32\,\beta-3\gamma_S+37+2\zeta_S^*.
\end{align*}
But this contradicts Claim~\ref{cl7}, since $4\gamma_S>2\zeta_S^*+37$.

\medskip\noindent
\textbf{Case 2}.\quad$G_X$ is a star.\\*
We denote by~$x$ the vertex of~$X$ corresponding to the centre of the
star~$G_X$, and by $x_1,\ldots,x_k$, $k\ge1$, the vertices of~$X$
corresponding to the leaves.

Using the inequality in~(iii) with $W=X$ again, we get
$e(X,V\setminus Y)\le\zeta_S^*\,|X|=(k+1)\,\zeta_S^*$. Since
$X=\{x,x_1,\ldots,x_k\}$, there must be an~$x_j$ such that
$e(\{x_j\},V\setminus Y)\le\dfrac1k\,(k+1)\,\zeta_S^*\le2\zeta_S^*$. Now
for $y\in Y^{\{x,x_j\}}$, we can estimate
\[\ds(y)\:\le\:4+2\cdot(4-1)+|(\Sigma(x)\cup\Sigma(x_j))\setminus\{y\}|
\:=\:9+|\Sigma(x)|+|\Sigma(x_j)\setminus\Sigma(x)|.\]
Since $Y\subseteq\Sigma(x)$, we have
$|\Sigma(x_j)\setminus\Sigma(x)|\le e(\{x_j\},V\setminus Y)\le 2\zeta_S^*$.
Together with $|\Sigma(x)|\le\beta$, this means
$\ds(y)\le\beta+9+2\zeta_S^*$. This contradicts Claim~\ref{cl7}, since
$\half\,\beta>9+2\zeta_S^*$.\eop

\bigskip\noindent
In the proof of Theorem~\ref{th2}, we used $\beta_S=11616\,(3-\chi(S))^2$
and $\gamma_S=208-66\,\chi(S)$. Since the sphere~$\mathbb{S}^2$ has
$\chi(\mathbb{S}^2)=2$, following the proof above means we can obtain
$\beta_P=11616$ and $\gamma_P=76$ for the planar case. But it is clear that
these values are far from best possible. Using more careful estimates in
the proof above and more careful reasoning in certain parts of the proof of
Lemma~\ref{lem1} can give significantly smaller values. Since our first
goal is to show that we can obtain constant values for these results, we do
not pursue this further.

\section{Concluding Remarks and Discussion}\label{conclusion}

\subsection{About the Proof}

The proof of our main theorem for major parts follows the same lines as the
proof of Theorem~\ref{mt2} in~\cite{HHMR}. In particular, the proof of that
theorem also starts with a structural lemma comparable to Lemma~\ref{lem1},
uses the structure of the graph to reduce the problem to edge-colouring a
specific multigraph, and then applies (and extends) Kahn's approach to that
multigraph. Of course, a difference is that Theorem~\ref{mt2} only deals
with list colouring the square of a graph, but it is probably possible to
generalise the whole proof to the case of list $\Sigma$-colouring.
Nevertheless, there are some important differences in the proofs we feel
deserve highlighting.

Lemma~\ref{lem1} is stronger than the comparable~\cite[Lemma~3.3]{HHMR}. We
obtain a set~$Y$ of vertices with degree four and with a very specific
structure of their neighbourhoods. This structure allows us to construct a
multigraph~$H$ so that a standard list edge-colouring of~$H$ provides the
information to colour the vertices in~$Y$ (see Lemma~\ref{lem:ext}). In the
lemma in~\cite{HHMR}, the vertices in the comparable set~$Y$ are only
guaranteed to have degree at most~$\Delta^{1/4}$, and knowledge about their
neighbourhood is far sketchier. This means that the translation to list
edge-colouring of a multigraph is not so clean; apart from the normal
condition in the list edge-colouring of~$H$ (that adjacent edges need
different colours), for each edge there may be up to~$O(\Delta^{1/2})$
non-adjacent edges that also need to get a different colour. In particular
this means that in~\cite{HHMR}, Kahn's result in Theorem~\ref{kahn-main}
cannot be used directly. Instead, a new, stronger, version has to be proved
that can deal with a certain number of non-adjacent edges that need to be
coloured differently. Lemma~\ref{lem1} allows us to use Kahn's Theorem
directly.

A second aspect in which our Lemma~\ref{lem1} is stronger is that in the
final condition~(S3), we have an `error term' that is a constant
times~$|W|$. In~\cite{HHMR} the comparable term is $\Delta^{9/10}\,|W|$,
where~$\Delta$ is the maximum degree of the graph. This in itself already
means that the approach in~\cite{HHMR} at best can give a bound of the type
$\frac32\,\Delta+o(\Delta)$. The fact that we cannot do better with the
stronger structural result is because of the limitations of Kahn's Theorem,
Theorem~\ref{kahn-main}. If it would be possible to replace the condition
in that theorem by a condition of the form `the vector $\vec{x}=(x_e)$ with
$x_e=\tfrac1{|L(e)|-K}$ for all $e\in E(H)$ is an element of~$\MP(H)$',
where~$K$ is some positive constant, the work in this paper would directly
give an improvement for the bound in Theorem~\ref{th1} to
$\frac32\,\beta+O(1)$. Note that our version of Lemma~\ref{lem-mp} is also
already strong enough to support that case.

Lemma~\ref{lem1} also allows us to prove a bound $\frac32\,\beta+O(1)$ for
the $\Sigma$-clique number in Theorem~\ref{th2}. The important corollary
that the square of a graph embeddable in a fixed surface has clique number
at most $\frac32\,\Delta+O(1)$ would have been impossible without the
improved bound in the lemma.

Also Lemma~\ref{lem-mp} is stronger than its
compatriot~\cite[Lemma~5.9]{HHMR}. The lemma in~\cite{HHMR} only deals with
the case $d_G(v)=\beta$ for all vertices~$v$ in~$H$. Because of this, it
can only be applied to the case that all vertices in~$H$ have maximum
degree~$\Delta(G)$ in~$G$. Some non-trivial trickery then has to be used to
deal with the case that there are vertices in~$H$ of degree less
than~$\Delta(G)$ in~$G$. Moreover, the proof of Lemma~\ref{lem-mp} is
completely different from the proof in~\cite{HHMR}. We feel that our new
proof is more natural and intuitive, giving a clear relation between the
lower bounds on the sizes of the lists and the upper bound of the sum of
their inverses. The proof in~\cite{HHMR} is more ad-hoc, using some
non-obvious distinction in a number of different cases, depending on the
size of~$W$ and the degrees of some vertices in~$W$.

\subsection{Further Work}

We feel that our work is just the beginning of the study of general
$\Sigma$-colouring problems. It should be possible to obtain deeper results
taking into account the structure of the $\Sigma$-system, and not just the
sizes of the sets~$\Sigma(v)$. The following easy result is an example of
this.

Recall that a graph is \emph{$q$-degenerate} if there exists an ordering
$v_1,v_2,\dots,v_n$ of the vertices such that every~$v_i$ has at most~$q$
neighbours in $\{v_1,\dots,v_{i-1}\}$. A class of graphs is degenerate if
there is some~$q$ such that every graph in the class is $q$-degenerate.
Examples of degenerate graph classes are graphs embeddable in a fixed
surface, and proper minor-closed classes.

\begin{proposition}\label{thm3}\mbox{}\\*
  For any degenerate graph class~$\MF$, there exists a constant~$c_\MF$
  such that the following holds. Let~$G$ be a graph in~$\MF$, together with
  a $\Sigma$-system so that $\Sigma(u)\cap\Sigma(v)=\varnothing$ for every
  two distinct vertices~$u,v$. Then
  $\ch(G;\Sigma)\le\Delta(G;\Sigma)+c_\MF$.
\end{proposition}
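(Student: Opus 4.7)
The plan is to reduce $\Sigma$-list-colouring of $G$ to ordinary list-colouring of an auxiliary graph, and then to bound the degeneracy of that auxiliary graph using the disjointness hypothesis. I would define the \emph{conflict graph} $G'$ on vertex set $V(G)$ by joining $u$ and $v$ exactly when they are $\Sigma$-neighbours in $G$. Tautologically a (list) $\Sigma$-colouring of $G$ is the same as a proper (list) colouring of $G'$, so $\ch(G;\Sigma)=\ch(G')$, and the task reduces to bounding $\ch(G')$.

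The structural input I would use is the following. Set $\beta=\Delta(G;\Sigma)$. Since the sets $\Sigma(t)$ are pairwise disjoint, every vertex $v$ lies in at most one of them, and therefore shares a common $\Sigma(t)$ with at most $\beta-1$ other vertices. Consequently, for any $U\subseteq V$ and any $v\in U$,
\[
d_{G'[U]}(v)\;\le\; d_{G[U]}(v)+(\beta-1).
\]
Now let $q$ be an integer such that every graph in $\MF$ is $q$-degenerate, and set $c_{\MF}=q$. For any non-empty $U\subseteq V$, the subgraph $G[U]$ is still $q$-degenerate, so it contains some $v\in U$ with $d_{G[U]}(v)\le q$; for this $v$ the displayed inequality yields $d_{G'[U]}(v)\le q+\beta-1$. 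Since $U$ was arbitrary, $G'$ is $(q+\beta-1)$-degenerate, and the standard greedy argument along a degeneracy ordering (each vertex coloured after its later neighbours) gives $\ch(G;\Sigma)=\ch(G')\le\Delta(G;\Sigma)+q$, as required.

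There is essentially no hard step here; the entire content is that the disjointness of the $\Sigma(t)$'s caps, at $\beta-1$, the number of $\Sigma$-conflicts a vertex can acquire from the $\Sigma$-system beyond its own $G$-neighbours. Without this hypothesis the cap collapses, since a single vertex can lie in many sets $\Sigma(t)$ at once (for example in the square-of-a-graph case $\Sigma\equiv N_G$, where $v\in\Sigma(t)$ for every neighbour $t$ of $v$), and one is then forced back to bounds of order $\tfrac{3}{2}\,\Delta$ as in Theorem~\ref{th1}.
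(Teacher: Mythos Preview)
Your proof is correct and follows essentially the same approach as the paper: both arguments hinge on the observation that the disjointness of the $\Sigma(t)$ forces every vertex to lie in at most one such set, and then greedily colour along a degeneracy ordering of~$G$. Your repackaging via the conflict graph~$G'$ is a clean way to phrase this, and in fact tightens the constant by one (you obtain $c_{\MF}=q$, whereas the paper's slightly looser count gives $c_{\MF}=q+1$).
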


\begin{trivlist}\item[]{\bf Proof}\mbox{ \ }Suppose every graph
  in~$\MF$ is $q$-degenerate, and set $c_\MF=q+1$. For a graph~$G$
  in~$\MF$, take an ordering $v_1,\ldots,v_n$ of its vertices such that
  each~$v_i$ has at most~$q$ neighbours in $\{v_1,\ldots,v_{i-1}\}$. We
  greedily colour the vertices $v_1,\ldots,v_n$ in~$G$ in that order.

  Note that by the hypothesis, each vertex~$v$ has at most one
  neighbour~$w$ with $v\in\Sigma(w)$. When colouring the vertex~$v_i$, we
  need to take into account its neighbours in $\{v_1,\ldots,v_{i-1}\}$,
  plus the vertices in $\Sigma(w)\cap\{v_1,\ldots,v_{i-1}\}$ for a
  vertex~$w$ with $v_i\in\Sigma(w)$ (where that vertex~$w$ can be in
  $\{v_{i+1},\ldots,v_n\}$). By construction of the ordering, there are at
  most~$q$ neighbours of~$v_i$ in $\{v_1,\ldots,v_{i-1}\}$. And a
  vertex~$w$ with $v_i\in\Sigma(w)$ has at most
  $|\Sigma(w)|\le\Delta(G;\Sigma)$ vertices in
  $\Sigma(w)\cap\{v_1,\ldots,v_{i-1}\}$. So the total number of forbidden
  colours when colouring~$v_i$ is at most $\Delta(G;\Sigma)+q$. Since each
  vertex has $\Delta(G;\Sigma)+q+1$ colours available, the greedy algorithm
  will always find a free colour.\eop
\end{trivlist}

\noindent
We think that it is possible to combine our main theorem and the theorem
above in the following way. For a $\Sigma$-system for a graph~$G$, let
$k(G;\Sigma)$ be the maximum of $|\Sigma(u)\cap\Sigma(v)|$ over all
pairs~$u,v$ of distinct vertices.

\begin{conjecture}\label{con4.2}\mbox{}\\*
  Let~$S$ be a fixed surface. Then there exists a constant~$c_S$ such that
  for all graphs~$G$ embeddable in~$S$, with a $\Sigma$-system, we have
  \[\ch(G;\Sigma)\:\le\:\Delta(G;\Sigma)+k(G;\Sigma)+c_S.\]
\end{conjecture}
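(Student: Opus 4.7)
The plan is to follow the blueprint of Section~\ref{proof} while exploiting the extra hypothesis that overlaps $|\Sigma(u)\cap\Sigma(v)|$ are uniformly bounded by $k=k(G;\Sigma)$. First I would take a vertex-minimal counterexample~$G$, assume it edge-maximal with respect to being embeddable in~$S$, and apply Lemma~\ref{lem1}. Cases~(S1) and~(S2) can be dispatched essentially as in Subsections~\ref{ss-S1} and~\ref{ss-S2}: the bound $\Delta(G;\Sigma)+k(G;\Sigma)+c_S$ comfortably dominates the number of $\Sigma$-neighbours of any bounded-degree vertex, provided $c_S$ is chosen large enough in terms of the constant $\zeta_S^*$ produced by Lemma~\ref{lem1}.

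The heart of the plan is case~(S3), where the problem is reduced to list edge-colouring the auxiliary multigraph~$H$ built in Subsection~\ref{ss-S3}. Two structural facts about~$H$ should now be combined. From Claim~\ref{cl2}, every parallel copy of the edge between $x_1$ and~$x_2$ in~$H$ witnesses a vertex of $Y^{\{x_1,x_2\}}\subseteq\Sigma(x_1)\cap\Sigma(x_2)$, so the edge-multiplicity of~$H$ is at most~$k$. From Lemma~\ref{lem-density} the simple graph underlying~$H$ inherits the density property of~(S3), and this, together with the surface bound $|E_{\mathrm{simple}}(H[W])|\le 3|W|-3\chi(S)$, yields a strong bound on $|E(W)|$. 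Substituting these into the odd-set inequalities of Theorem~\ref{edm-mp} and applying Kahn's Theorem~\ref{kahn-main} as in Subsection~\ref{sec-mp} should produce a list edge-colouring of~$H$ from lists of size $\Delta(G;\Sigma)+k(G;\Sigma)+c_S$, which by Lemma~\ref{lem:ext} lifts to the required $\Sigma$-colouring of~$G$.

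The principal obstacle is controlling the matching-polytope condition on small odd sets. For a triangle $W=\{x_1,x_2,x_3\}$ in~$H$, the multiplicity bound alone only gives $|E(W)|\le 3k$, which would force lists of size at least~$3k$ on each edge of~$W$ and yield $\Delta+3k+c_S$ rather than the conjectured $\Delta+k+c_S$. I would attack this gap in two regimes. When $k\ge\tfrac12\Delta(G;\Sigma)$, Theorem~\ref{th1} already gives $\bigl(\tfrac32+\eps\bigr)\,\beta\le\Delta+k+c_S$ for $\beta$ large, so nothing more is needed. In the complementary regime $k<\tfrac12\Delta(G;\Sigma)$, one would need to strengthen Lemma~\ref{lem1} so that the pairs $\{x_1,x_2\}\subseteq X$ achieving large $|Y^{\{x_1,x_2\}}|$ do not cluster into triangles of~$H$; equivalently, one seeks a very special pair $(X,Y)$ in which the sizes $|Y^{\{x_1,x_2\}}|$ are spread out over $\binom{X}{2}$. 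This refinement of the structural lemma, together with a correspondingly refined version of Lemma~\ref{lem-mp} that inputs both the multiplicity and the density, is what I expect to be the crux of any proof of Conjecture~\ref{con4.2}.
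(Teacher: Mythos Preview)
The statement you are attacking is Conjecture~\ref{con4.2}, and the paper does \emph{not} prove it; it is presented in Section~\ref{conclusion} as an open problem motivated by the analogy with Vizing's theorem. So there is no proof in the paper to compare your proposal against. Your proposal is itself not a proof but a plan with an explicitly flagged gap, and that gap is more fundamental than you indicate.

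The central obstruction is Theorem~\ref{kahn-main} itself. Kahn's theorem requires the vector $(1/|L(e)|)$ to lie in $(1-\delta)\,\MP(H)$ for some fixed $\delta>0$; this multiplicative slack means the method can only ever output bounds of the shape $(1+\eps)\cdot(\text{something})$, never a bound with an additive constant. The paper says exactly this in Section~\ref{conclusion}: even for Theorem~\ref{th1} the approach cannot be sharpened from $\tfrac32\beta+o(\beta)$ to $\tfrac32\beta+O(1)$ without an unknown strengthening of Kahn's theorem. So no refinement of Lemma~\ref{lem1} or Lemma~\ref{lem-mp} will get you to $\Delta+k+c_S$ through Theorem~\ref{kahn-main}. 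Moreover, the edge-colouring statement you would be reducing to --- a list version of Vizing's bound, $\ch'(H)\le\Delta(H)+\mu(H)$ --- is itself a famous open problem (it would follow from the List Colouring Conjecture), so even a flawless reduction would not close the argument with current tools.

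Your handling of the regime $k\ge\tfrac12\,\Delta(G;\Sigma)$ is also incorrect. Theorem~\ref{th1} gives $\ch(G;\Sigma)\le\bigl(\tfrac32+\eps\bigr)\beta$; under $k\ge\tfrac12\beta$ one has $\Delta+k\ge\tfrac32\beta$, but concluding $\bigl(\tfrac32+\eps\bigr)\beta\le\Delta+k+c_S$ would need $\eps\beta\le c_S$, which fails for large~$\beta$. Indeed, already the special case $\ch(G;\Sigma)\le\tfrac32\beta+O(1)$ is open --- that is precisely Conjecture~\ref{con1}.
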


\noindent
This conjecture would fit with our current proof of Theorem~\ref{th1}, the
main part of which is a reduction of the original problem to a list
edge-colouring problem. For this approach, Shannon's Theorem~\cite{Sh49}
that a multigraph with maximum degree~$\Delta$ has an edge-colouring using
at most $\bigl\lfloor\frac32\,\Delta(G)\bigr\rfloor$ colours, forms a
natural base for the bounds conjectured in Conjecture~\ref{con1}. If the
relation between colouring the square of graphs embeddable in a fixed
surface and edge-colouring multigraphs holds in a stronger sense, then
Conjecture~\ref{con4.2} forms a logical extension of Vizing's
Theorem~\cite{Viz64} that a multigraph with maximum degree~$\Delta$ and
maximum edge-multiplicity~$\mu$ has an edge-colouring with at most
$\Delta+\mu$ colours.

In Borodin \emph{et al.}~\cite{Bor07}, a weaker version of
Conjecture~\ref{con4.2} for cyclic colouring of plane graphs was proved.
Recall that if~$G^P$ is a plane graph, then~$\Delta^*$ is the maximum
number of vertices in a face. Let~$k^*$ denote the maximum number of
vertices that two faces of~$G^P$ have in common.

\begin{theorem}[Borodin, Broersma, Glebov \& Van den
  Heuvel~\cite{Bor07}]\label{th-c-k}\mbox{}\\*
  For a plane graph~$G^P$ with $\Delta^*\ge4$ and $k^*\ge4$, we have
  $\chi^*(G^P)\le\Delta^*+3k^*+2$.
\end{theorem}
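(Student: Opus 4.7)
The plan is to prove Theorem~\ref{th-c-k} via the now-standard minimum-counterexample-plus-discharging framework used for cyclic colouring bounds. Let $G^P$ be a plane graph with $\Delta^*\ge 4$ and $k^*\ge 4$ that violates $\chi^*(G^P)\le \Delta^*+3k^*+2$, chosen to minimize $|V(G)|$ (and subject to that, to maximize $|E(G)|$), and write $N=\Delta^*+3k^*+2$. Standard reductions make $G$ 2-connected, so every face boundary is a cycle, and the minimum degree is at least~$3$.

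For the reducibility step, define the \emph{cyclic degree} $d^*(v)$ as the number of distinct $u\ne v$ sharing a face with $v$, and I would try to show $d^*(v)\ge N$ for every vertex~$v$. Indeed, if some vertex $v$ satisfies $d^*(v)\le N-1$, we delete~$v$ from~$G$, apply minimality to cyclically colour the resulting plane graph with $N$ colours, and extend the colouring to~$v$ by picking a colour not used on its $<N$ cyclic neighbours. Care is needed because deleting~$v$ may merge several faces incident to~$v$ into a single face whose size is now larger than $\Delta^*$; one controls this either by restricting to configurations where the merged face remains bounded (using $k^*\ge 4$ to handle small cuts around~$v$) or by working with slightly more elaborate reducible configurations (contracting a short path through~$v$).

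The upper bound on $d^*(v)$ is then extracted from the face structure around~$v$. A vertex of degree~$d$ lies on $d$ faces $f_1,\dots,f_d$ in cyclic order; $|V(f_i)|\le\Delta^*$, consecutive faces $f_i,f_{i+1}$ share at least the two endpoints of the edge of~$v$ between them, and any two faces share at most $k^*$ vertices. Combining these with inclusion--exclusion gives a bound roughly of the form $d^*(v)\le \Delta^*+(d-1)(\Delta^*-2)-\text{overlaps}$ with a $k^*$-dependent correction, which together with $d^*(v)\ge N$ forces restrictive constraints on vertices of small degree and faces of small size.

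The final step is discharging: give each vertex initial charge $d(v)-4$ and each face initial charge $d(f)-4$, so by Euler's formula the total charge is exactly $-8$. Design rules under which large faces and high-degree vertices send charge to their small neighbours, calibrated so that the reducibility inequality $d^*(v)\ge N$ guarantees every element ends with non-negative charge, yielding the contradiction $-8\ge 0$. The main obstacle is designing the discharging rules: the constant $3k^*+2$ corresponds to the ``extremal'' local configuration where $v$ lies on several faces each sharing exactly $k^*$ vertices with its cyclic neighbour, and the rules must extract precisely that slack. The hypotheses $\Delta^*\ge 4$ and $k^*\ge 4$ are used here to rule out small degenerate configurations that would otherwise require a separate (and weaker) analysis.
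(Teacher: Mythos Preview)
The paper does not contain a proof of Theorem~\ref{th-c-k}. This result is quoted in the ``Further Work'' subsection as a theorem of Borodin, Broersma, Glebov and Van den Heuvel, with the proof residing entirely in the cited paper~\cite{Bor07}; in the present paper it is mentioned only as supporting evidence for Conjecture~\ref{con4.2}. So there is no ``paper's own proof'' to compare your proposal against.

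As for the proposal itself: the overall architecture (minimal counterexample, show every vertex has large cyclic degree, discharge to a contradiction) is indeed the standard one for cyclic-colouring bounds and is the shape of the argument in~\cite{Bor07}. However, what you have written is an outline rather than a proof, and the gap you yourself flag in the reducibility step is the genuine difficulty. Deleting a vertex~$v$ merges the faces incident with~$v$ into one face whose size can be as large as $\sum_i |V(f_i)|-2d(v)+1$, which in general exceeds~$\Delta^*$; your two suggested fixes (``restrict to configurations where the merged face remains bounded'' and ``contract a short path through~$v$'') are not developed, and neither works without a substantial amount of additional structure. In~\cite{Bor07} this is handled not by deleting a single vertex but by identifying specific reducible configurations (short separating cycles, vertices with particular patterns of small incident faces, etc.) whose removal or contraction keeps both~$\Delta^*$ and~$k^*$ under control; the discharging rules are then tailored to those configurations. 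Your sketch contains neither the list of configurations nor the rules, and the inclusion--exclusion estimate you indicate for~$d^*(v)$ is too loose on its own to force the needed constraints. So the proposal identifies the right framework but does not yet supply the content that makes it go through.
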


\subsubsection*{\boldmath$\Sigma$-Colouring and Minor-Closed Classes.}

It seems natural to expect that our work on graphs embeddable in a fixed
surface can be extended to arbitrary proper minor-closed classes of graphs.
Compare our main Theorem~\ref{th1} with Theorem~\ref{mt2}, the main result
from~\cite{HHMR}. But there exist some obstacles to a direct
generalisation.

It is easy to show that if a graph $G$ is \emph{$q$-degenerate}, then its
square is $((2q-1)\,\Delta(G))$-degenerate. It is well-known, see
e.g.~\cite{Mad68}, that for every proper minor-closed family~$\MF$, there
is a constant~$C_\MF$ such that every graph in~$\MF$ is $C_\MF$-degenerate.
Hence~$G^2$ is $((2C_\MF-1)\,\Delta(G))$-degenerate, and so for every
$G\in\MF$, we have $\ch(G^2)\le(2C_\MF-1)\,\Delta(G)+1$.

For $\Sigma$-colouring, there is no comparable upper bound on
$\ch(G;\Sigma)$ in terms of the degeneracy of~$G$ and $\Delta(G;\Sigma)$.
To see this, let~$G$ be the graph obtained from the complete graph~$K_n$,
$n\ge4$, by subdividing all edges of~$K_n$ once. For a vertex~$v$
corresponding to an original vertex in~$K_n$, set $\Sigma(v)=\varnothing$;
while for a ``new'' vertex~$v$ of degree two, set $\Sigma(v)=N_G(v)$. Then
we have that~$G$ is 2-degenerate and $\Delta(G;\Sigma)=2$, but
$\ch(G;\Sigma)=n$.

Nevertheless, combining the Robertson and Seymour graph minor structure
theorem~\cite{RS03} with our main theorem on graphs embeddable in bounded
genus surfaces, one can fairly easily obtain the following.

\begin{theorem}\label{thm-mcf}\mbox{}\\*
  Let~$\MF$ be a proper minor-closed family of graphs. Then there exist
  constants~$C_\MF$ and~$c_\MF$ such that the following holds: For any
  graph~$G$ in~$\MF$ with a $\Sigma$-system, we have
  $\ch(G;\Sigma)\le C_\MF\,\Delta(G;\Sigma)+c_\MF$.
\end{theorem}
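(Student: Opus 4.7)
The plan is to combine the Robertson--Seymour graph minor structure theorem~\cite{RS03} with our Theorem~\ref{th1}. The structure theorem provides constants $g, \alpha, a$ depending only on $\MF$ such that every $G \in \MF$ admits a tree decomposition $(T, \{V_t\}_{t\in T})$ of adhesion at most $a$, in which each torso $G_t$ is $\alpha$-almost embeddable in a surface of Euler genus at most $g$: it is built from a graph $H_t$ embedded in such a surface by adjoining at most $\alpha$ apex vertices $A_t$ (with arbitrary neighbourhoods) and inserting at most $\alpha$ vortices of path-width at most $\alpha$ into faces of $H_t$.

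I would proceed by induction on $|V(T)|$. Picking a leaf $t$ with parent $t'$, let $U_t = V_t \setminus V_{t'}$ denote the vertices new to the leaf torso. Applying the inductive hypothesis to $G \setminus U_t$, equipped with a $\Sigma$-system enlarged to record the constraints $U_t$ would impose on the adhesion $V_t \cap V_{t'}$, yields a list $\Sigma$-colouring of $V(G) \setminus U_t$; the main task is to extend this colouring to $U_t$. The set $U_t$ partitions naturally into apex vertices $U_t \cap A_t$ (at most $\alpha$ of them), vortex vertices (forming an $O(\alpha^2)$-degenerate subgraph, since a union of $\le \alpha$ vortices of path-width $\le \alpha$ has degeneracy $O(\alpha^2)$), and surface vertices $U_t \cap S_t$ lying in the bounded-genus embedded graph $H_t$.

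I plan to colour these three groups in the order apex, surface, vortex. The apex vertices come first: each apex $v$ has at most $|A_t|-1+a$ already-coloured adjacencies and at most $(a+\alpha)\,\Delta(G;\Sigma)$ many $\Sigma$-forbidden colours (one per other member of $\Sigma(w)$ for each adhesion or other apex $w$ with $v \in \Sigma(w)$), so a list of size $C_\MF\,\Delta(G;\Sigma)+c_\MF$ handles them greedily once $C_\MF$ is taken large enough. The surface vertices are then coloured by invoking Theorem~\ref{th1} on an auxiliary embeddable graph $G_0$ on essentially $U_t \cap S_t$ together with the adhesion, equipped with a $\Sigma_0$-system that absorbs the constraints from the already-coloured apex and adhesion vertices. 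A careful construction keeps $\Delta(G_0;\Sigma_0) \le \Delta(G;\Sigma) + O(\alpha^2 + a)$, so Theorem~\ref{th1} produces a list $\Sigma_0$-colouring using $(\tfrac32+\varepsilon)\,\Delta(G;\Sigma)+O(1)$ colours. Finally, the vortex vertices are handled greedily via their $O(\alpha^2)$-degenerate structure, each such vertex seeing $O(\Delta(G;\Sigma))$ forbidden colours.

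The main obstacle is the construction of the auxiliary surface graph $G_0$: an already-coloured apex or adhesion vertex $v$ may impose pairwise ``all-different'' constraints on its $\Sigma$-partners lying in the surface, and naively encoding such a constraint by a phantom vertex adjacent to all partners of $v$ could destroy the bounded-genus embeddability of $G_0$ or inflate $\Delta(G_0;\Sigma_0)$. The resolution is to encode these constraints by at most $O(\alpha+a)$ new apex-like auxiliary vertices; since Theorem~\ref{th1} concerns graphs embeddable in a fixed surface rather than almost-embeddable ones, one must either handle these extra apex vertices by a preliminary greedy step that ``pre-reserves'' a constant number of colours in each surface vertex's list, or rerun the argument underlying Theorem~\ref{th1} with bounded apex budget. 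Either way only a constant depending on $\alpha, g, a$ is lost, which is absorbed into $C_\MF$ and $c_\MF$.
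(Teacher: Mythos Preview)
The paper does not actually prove this theorem. It only states, immediately before the theorem, that ``combining the Robertson and Seymour graph minor structure theorem with our main theorem on graphs embeddable in bounded genus surfaces, one can fairly easily obtain'' the result, and immediately after adds that ``giving more details of the ideas of the proof would require a number of additional definitions, and is beyond the scope of this short discussion.'' Your proposal is therefore already more detailed than what the paper offers, and it follows precisely the route the paper indicates.

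That said, since you are filling in details the paper omits, let me flag the places where your sketch is thinnest. The step ``applying the inductive hypothesis to $G\setminus U_t$, equipped with a $\Sigma$-system enlarged to record the constraints $U_t$ would impose on the adhesion'' needs care: a $\Sigma$-system must satisfy $\Sigma(v)\subseteq N(v)$, so constraints of the form $u,v\in\Sigma(w)$ with $w\in U_t$ and $u,v$ in the adhesion cannot be encoded by enlarging $\Sigma$ on $G\setminus U_t$ without adding vertices or edges, which might leave $\MF$. Fortunately there are at most $\binom{a}{2}$ such pairs, so these extra constraints can simply be absorbed into $c_\MF$ after the fact rather than encoded beforehand. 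The more substantial issue, which you correctly identify as the main obstacle, is handling the sets $\Sigma(w)$ for the (boundedly many) apex and adhesion vertices $w$: each such set can contain up to $\Delta(G;\Sigma)$ surface vertices that must all receive distinct colours, and this clique constraint cannot be embedded in the bounded-genus surface. Your two proposed workarounds are both in the right spirit, but neither is quite complete as stated; the cleanest fix is probably to observe that each surface vertex lies in at most $\alpha+a$ such special $\Sigma(w)$'s, so the total extra forbidden-colour count per vertex is at most $(\alpha+a)\,\Delta(G;\Sigma)$, and this can be absorbed directly into $C_\MF$ rather than into the auxiliary graph $G_0$.
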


\noindent
Giving more details of the ideas of the proof would require a number of
additional definitions, and is beyond the scope of this short discussion.
It would be interesting to find a proof of this theorem that does not
require the full force of the graph minor structure theorem.

Also finding the smallest possible constant~$C_\MF$ for certain
minor-closed families~$\MF$ appears an interesting question.
Theorem~\ref{mt2} clearly suggests that if~$\MF_k$ denotes the class of
$K_{3,k}$-minor free graphs ($k\ge3$), $C_{\MF_k}$ should be equal to
$3/2$.

\section{Kahn's Work on List Edge-Colourings}\label{background}

As mentioned earlier, Theorem~\ref{kahn-main} is not explicitly stated
in~\cite{Kah00}, but is implicit in the proof of the main result of that
paper. In this final section, we give an overview of how this theorem can
be obtained from the ideas in Kahn's paper.

The main result in~\cite{Kah00} is that the list chromatic index is
asymptotically equal to the fractional chromatic index of a multigraph.

\begin{theorem}[Kahn~\cite{Kah00}]\label{kahn-orig}\mbox{}\\*
  For any $\eps>0$, there exists a~$\Delta_\eps$ such that for all
  $\Delta\ge\Delta_\eps$ the following holds. If~$H$ is a multigraph with
  maximum degree at most~$\Delta$, then
  \[\chi'_f(H)\:\le\:\chi'(H)\:\le\:\ch'(H)\:\le\:(1+\eps)\,\chi'_f(H).\]
\end{theorem}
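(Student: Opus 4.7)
The plan is to derive Theorem~\ref{kahn-orig} from the more general Theorem~\ref{kahn-main}, which (as the section's discussion emphasises) is available to us from the techniques of \cite{Kah00}. The two easy inequalities $\chi'_f(H)\le\chi'(H)\le\ch'(H)$ come for free: any proper edge-colouring yields a fractional one (every matching is a $\{0,1\}$-valued point of $\MP(H)$), and ordinary chromatic index is just list chromatic index with all lists equal to $\{1,\ldots,k\}$. So the real content is the upper bound $\ch'(H)\le(1+\eps)\,\chi'_f(H)$.

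The first substantive step is the LP characterisation of $\chi'_f(H)$: it equals the minimum $k$ for which the constant vector $(1/k)_{e\in E(H)}$ lies in the matching polytope $\MP(H)$. I will take this as a standard fact (it is the LP dual of fractional edge-cover by matchings, and it follows from Edmonds' Theorem~\ref{edm-mp} combined with the usual covering-by-matchings reformulation). A direct consequence I will invoke is $\chi'_f(H)\ge\Delta(H)$, which follows since the vertex constraint $\sum_{e\ni v}x_e\le1$ forces $k\ge d(v)$ when $x_e\equiv1/k$.

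Given $\eps>0$, I set $\delta=\eps/(1+\eps)$ and $\nu=1$, and let $\Delta_\eps=\Delta_{\delta,\nu}$ be the threshold from Theorem~\ref{kahn-main}. Take any multigraph $H$ with $\Delta(H)\ge\Delta_\eps$ and any list assignment $L$ with $|L(e)|\ge(1+\eps)\,\chi'_f(H)$ for every edge $e$. I then verify the three hypotheses of Theorem~\ref{kahn-main} with parameter $\Delta:=\Delta(H)$. The degree bound is immediate. The size condition is $|L(e)|\ge(1+\eps)\,\chi'_f(H)\ge(1+\eps)\,\Delta(H)\ge\nu\,\Delta$. For the polytope condition, write $k=\chi'_f(H)$: by the LP characterisation $(1/k)_e\in\MP(H)$, hence $(1/((1+\eps)k))_e=(1/(1+\eps))\,(1/k)_e\in(1/(1+\eps))\,\MP(H)=(1-\delta)\,\MP(H)$, and since $1/|L(e)|\le1/((1+\eps)k)$ coordinate-wise, the coordinate-wise downward-closure of $(1-\delta)\,\MP(H)$ (clear from Theorem~\ref{edm-mp}, whose defining inequalities are all of the form $\sum x_e\le c$ with $x_e\ge0$) gives $(1/|L(e)|)_e\in(1-\delta)\,\MP(H)$. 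Theorem~\ref{kahn-main} then produces a proper list edge-colouring, proving $\ch'(H)\le(1+\eps)\,\chi'_f(H)$.

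Two loose ends remain. Multigraphs with $\Delta(H)<\Delta_\eps$ must be absorbed into the statement by enlarging the threshold using a crude deterministic bound (Shannon's $\ch'(H)\le\lfloor\tfrac32\,\Delta\rfloor$ for the list chromatic index of multigraphs is enough). The main obstacle, and the only truly hard part, is of course concealed inside Theorem~\ref{kahn-main} itself: its proof is the Rödl-nibble semi-random argument with Talagrand-style concentration that takes up most of \cite{Kah00}, and in this section we treat it as a black box, whose extraction from Kahn's paper is exactly what this concluding section is meant to explain.
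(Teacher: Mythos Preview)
Your derivation of Theorem~\ref{kahn-orig} from Theorem~\ref{kahn-main} via the LP characterisation of~$\chi'_f$ and the observation $\chi'_f(H)\ge\Delta(H)$ is exactly the paper's approach, which it compresses into a single sentence immediately following the statement. Your aside about absorbing small-degree multigraphs is confused, though: enlarging the threshold~$\Delta_\eps$ does not exclude graphs with small $\Delta(H)$ under the literal ``maximum degree at most~$\Delta$'' phrasing, and a $\tfrac32\Delta$-type bound cannot give $(1+\eps)\,\chi'_f$ for small~$\eps$ --- but this is a quirk of how the statement is worded (the paper does not address it either) rather than a gap in the main argument.
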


\bigskip\noindent
Here~$\chi'(H)$ is the normal chromatic index (or edge-chromatic number)
of~$H$, $\chi'_f(H)$ is the fractional chromatic index of~$H$,
and~$\ch'(H)$ is the list chromatic index of~$H$. The crucial step to
relate this result to the matching polytope~$\MP(H)$ is the following
well-known characterisation of the fractional chromatic index:
\[\chi'_f(H)\:=\:\min\{\,\gamma>0\mid
\text{the vector $(x_e)_{e\in E(H)}$ with $x_e=\gamma^{-1}$ is in
  $\MP(H)$}\,\}.\]
So Theorem~\ref{kahn-orig} is just a special case of
Theorem~\ref{kahn-main} if we set $|L(e)|\ge\dfrac{\chi'_f(H)}{1-\delta}$
for all edges~$e$. (The second condition of Theorem~\ref{kahn-main} is
automatically satisfied in that case, since trivially
$\chi'_f(H)\ge\Delta(H)$.)

In order to prove Theorem~\ref{kahn-orig}, Kahn describes a randomised
iterative procedure that colours the edges of~$H$ in a number of stages.
During this procedure, the lists of available colours for each edge will
change, and the lists will not be the same size for the uncoloured edges.
This is why, roughly speaking, Kahn's actual proof deals with the more
general case, as described in Theorem~\ref{kahn-main}.

In order to give the reader a better understanding of the background of
Kahn's approach, we give an overview of the crucial elements in the
following subsections.

\subsection{Hardcore Distributions}

Hardcore distributions are distributions that originally arose in
Statistical Physics, and that satisfy very natural conditions which
generally provide strong independence properties allowing good sampling
from a given family. Given a family of subsets~$\MF$ of a given
set~$\mathcal{E}$, a natural way of picking at random an element of~$\MF$
(or, in an other words, a probability distribution on~$\MF$) is as follows.

Let us suppose that each element~$e$ of~$\mathcal{E}$ has been assigned a
positive weight~$\lambda_e$. Then we pick each element $M\in\MF$ with
probability proportional to $\prod\limits_{e\,\in\,M}\lambda_e$. More
precisely, the probability~$P_M$ of picking $M\in\MF$ at random is given by
\[P_M\:=\:\frac{\prod\limits_{e\,\in\,M}\lambda_e}
{\sum\limits_{M'\,\in\,\MF}\:\prod\limits_{e\,\in\,M'}\lambda_e}.\]
We define the vector $\vec{x}=(x_e)_{e\in\mathcal{E}}$ by setting
$x_e=\sum\limits_{M\,\in\,\MF,\;e\in M}\!\!P_M$. It is clear that~$x_e$ is
the probability that a given random element of~$\MF$ contains the
element~$e$. The probability distribution $\{P_M\}$ is called a
\emph{hardcore} distribution with \emph{activities~$\{\lambda_e\}$} and
\emph{marginals~$\{x_e\}$}. The vector $\vec{x}$ is called the
\emph{marginal vector} associated with the hardcore distribution~$\{P_M\}$.

Given a vector~$\vec{x}$, it is not always true that~$\vec{x}$ is the
marginal vector of some hardcore distribution. Indeed if $\mathcal{P}(\MF)$
denotes the polytope defined by taking the convex hull of the
characteristic vectors of the elements of~$\MF$\,\footnote{\,Recall that
  the characteristic vector, $\textbf{1}_M$, of a given element $M\in\MF$
  is the $|\mathcal{E}|$-dimensional vector $(y_e)_{e\in\mathcal{E}}$ such
  that $y_e=1$ if $e\in M$ and $y_e=0$ otherwise.}, then the marginal
vector~$\vec{x}$ of a hardcore distribution is in $\mathcal{P}(\MF)$:
\[\vec{x}\:=\:\sum_{M\,\in\,\MF}P_M\,\textbf{1}_{M}.\]

This provides a necessary condition for a vector to be the marginal vector
of a hardcore distribution. It is not difficult to prove that the
activities~$\lambda_e$ corresponding to~$\vec{x}$, if they exist, are
unique.

\medskip
{}From now on, let~$H$ be a given multigraph. We recall that~$\MM(H)$
and~$\MP(H)$ are the family of matchings and the matching polytope of~$H$,
respectively. (So~$\MM(H)$ will play the role of the family~$\MF$ from
above. And using the notation from above means
$\MP(H)=\mathcal{P}(\MM(H))$.)

We have the following theorem relating the matching polytope and hardcore
distributions.

\begin{theorem}[Lee~\cite{Lee90}, Rabinovich \emph{et
    al.}~\cite{RSW92}]\label{lem-hardcore}\mbox{}\\*
  For a given real number $0<\delta<1$, suppose~$\vec{x}$ is a vector in
  $(1-\delta)\,\MP(H)$, for some multigraph~$H$. Then there exists a unique
  family of activities~$\lambda_e$ such that~$\vec{x}$ is the marginal
  vector of the hardcore distribution on the matchings of~$H$ defined by
  the~$\lambda$'s. The hardcore distribution $\{P_M\}_{M\in\MM(H)}$ is the
  unique distribution maximising the entropy function
  \[\mathcal{H}({Q_M})\:=\:-\!\sum_{M\,\in\,\MM(H)}\!Q_M\,\log(Q_M)\]
  among all the distributions~$\{Q_M\}_{M\in\MM(H)}$ satisfying
  $\vec{x}=\!\sum\limits_{M\,\in\,\MM(H)}\!Q_M\,\textbf{1}_M$.
\end{theorem}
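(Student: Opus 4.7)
The approach is the classical maximum entropy derivation of Gibbs/hardcore distributions, which is suggested directly by the form of the theorem statement. Let $\mathcal{D}(\vec{x})$ denote the set of probability distributions $\{Q_M\}$ on $\MM(H)$ whose marginal vector equals $\vec{x}$. Since $\vec{x}\in\MP(H)$, we may write $\vec{x}$ as a convex combination of characteristic vectors of matchings, and this convex combination is itself a distribution in $\mathcal{D}(\vec{x})$; the set is also closed, bounded, and convex as an affine slice of a simplex. The entropy functional $\mathcal{H}$ is continuous (with the convention $0\log 0=0$) and strictly concave, so it attains a unique maximum $P$ on $\mathcal{D}(\vec{x})$; this already yields the uniqueness statement in the second sentence of the theorem.

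The next step is to identify the form of $P$. I would introduce Lagrange multipliers $\mu$ for the constraint $\sum_M Q_M=1$ and $\theta_e$ for each marginal constraint $\sum_{M\ni e}Q_M=x_e$, and then differentiate the Lagrangian with respect to $Q_M$ at an interior optimum. Stationarity yields $\log P_M=-1-\mu-\sum_{e\in M}\theta_e$; substituting $\lambda_e=e^{-\theta_e}$ gives exactly the hardcore form $P_M=C\prod_{e\in M}\lambda_e$ with $C$ fixed by normalisation. For the uniqueness of activities, suppose $\{\lambda_e\}$ and $\{\lambda'_e\}$ both yield hardcore distributions with marginal $\vec{x}$; both distributions maximise $\mathcal{H}$ on $\mathcal{D}(\vec{x})$ and therefore coincide with $P$, and comparing the probabilities at $M=\varnothing$ followed by each singleton matching $M=\{e\}$ (always a valid matching) forces first $C=C'$ and then $\lambda_e=\lambda'_e$.

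The main obstacle is justifying the interior KKT argument: one needs $P$ to place positive mass on enough matchings for stationarity of the Lagrangian to produce the hardcore form on all of $\MM(H)$. This is exactly where the factor $(1-\delta)$ enters. For any $\vec{x}=(1-\delta)\vec{y}$ with $\vec{y}\in\MP(H)$, each vertex-degree inequality $\sum_{e\ni v}x_e\le 1$ and each odd-set inequality $\sum_{e\in E(W)}x_e\le \tfrac12(|W|-1)$ from Theorem~\ref{edm-mp} is strictly satisfied with slack at least $\delta$, so $\vec{x}$ can only meet the boundary of $\MP(H)$ on the face where some $x_e=0$. An edge $e$ with $x_e=0$ is automatically absent from every $M$ with $P_M>0$ and must receive $\lambda_e=0$ in the hardcore form; these degenerate edges are handled cleanly by restricting the entire argument to the sub-multigraph on $\{e:x_e>0\}$, where $\vec{x}$ lies in the relative interior of the matching polytope, $P$ has full support, and the KKT computation is unambiguous. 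Once this boundary issue is taken care of, the three conclusions of the theorem follow directly from the calculations above.
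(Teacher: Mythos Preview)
The paper does not prove this theorem at all: it is stated as a known result attributed to Lee~\cite{Lee90} and Rabinovich \emph{et al.}~\cite{RSW92}, and then used as a black box in the overview of Kahn's method. So there is no ``paper's own proof'' to compare against.

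Your plan is the standard maximum-entropy/Gibbs variational argument, and it is essentially correct. Two small points worth tightening. First, in your uniqueness argument you assert that any hardcore distribution automatically maximises $\mathcal{H}$ on $\mathcal{D}(\vec{x})$; this is true but deserves one line: if $P$ is hardcore with activities $\{\lambda_e\}$ and $Q\in\mathcal{D}(\vec{x})$, then $\sum_M Q_M\log P_M=\log C+\sum_e x_e\log\lambda_e=\sum_M P_M\log P_M$ (since $P$ and $Q$ share marginals), so the Kullback--Leibler divergence gives $\mathcal{H}(P)-\mathcal{H}(Q)=D(Q\Vert P)\ge 0$. Second, the paper defines activities as strictly positive weights, so rather than assigning $\lambda_e=0$ to edges with $x_e=0$, your restriction to the sub-multigraph on $\{e:x_e>0\}$ is the right move; on that sub-multigraph $\vec{x}$ lies in the genuine interior of the matching polytope (it is full-dimensional, containing $0$ and every $\mathbf{1}_{\{e\}}$), and one can exhibit a full-support distribution in $\mathcal{D}(\vec{x})$ by mixing any representing distribution with a small multiple of the uniform distribution on matchings, which then forces the entropy maximiser to have full support via the infinite directional derivative of $-t\log t$ at $t=0$.
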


\noindent
Kahn and Kayll proved in~\cite{KaKa97} a family of results, resulting in a
long-range independence property for the hardcore distributions defined by
a marginal vector~$\vec{x}$ inside $(1-\delta)\,\MP(H)$, see~\cite{Kah00}.
We refer to the original papers of Kahn~\cite{Kah96,Kah00} and Kahn and
Kayll~\cite{KaKa97}, and the book by Molloy and Reed~\cite{MoRe} for more
on these issues. We settle here for citing the following lemma.

\begin{lemma}[{\cite[Lemma 4.1]{KaKa97}}]\label{lem-hd1}\mbox{}\\*
  For every~$\delta$, $0<\delta<1$, there is a $\rho_\delta>0$ such that if
  $\{P_M\}$ is a hardcore distribution on the matchings of~$H$ with
  marginal vector $\vec{x}\in(1-\delta)\,\MP(H)$, then for all $u,v\in
  V(H)$,
  \[\mathbf{Pr}(\text{$M$ does not touch $u$ and $v$})\:>\:\rho_\delta.\]
\end{lemma}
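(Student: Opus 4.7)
The plan is to reformulate the statement as a bound on a ratio of weighted matching partition functions, use the polytope constraint to handle a single vertex at a time, and then invoke a correlation/expansion argument specific to hardcore measures on matchings to combine the two single-vertex bounds. For any sub-multigraph $H'$ of $H$, write $Z_{H'} := \sum_{M \in \MM(H')}\prod_{e\in M}\lambda_e$. Then $\mathbf{Pr}(M\text{ touches neither } u \text{ nor } v) = Z_{H-\{u,v\}}/Z_H$, and similarly $\mathbf{Pr}(M\text{ misses }w)=Z_{H-w}/Z_H$, so the goal is a lower bound of the form $Z_{H-\{u,v\}}/Z_H \ge \rho_\delta$ with $\rho_\delta$ a positive function of $\delta$ alone.

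The first step is the single-vertex reduction. Since the events $\{e\in M\}$ for $e\ni w$ are pairwise exclusive, $\mathbf{Pr}(w\text{ saturated}) = \sum_{e\ni w}x_e$. Edmonds' first family of inequalities applied to the hypothesis $\vec x \in (1-\delta)\MP(H)$ gives $\sum_{e\ni w}x_e \le 1-\delta$ for every vertex $w$, and hence $Z_{H-w}/Z_H \ge \delta$ for every $w$; in particular this applies to both $u$ and $v$.

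The second step is the combination. Factorising $\mathbf{Pr}(\bar A_u\cap\bar A_v) = (Z_{H-u}/Z_H)\cdot(Z_{H-\{u,v\}}/Z_{H-u})$, the second factor is the probability that $v$ is unsaturated in the hardcore distribution on $H-u$ with the \emph{inherited} activities. The obstacle here is that deleting $u$ can drive the induced marginal vector on $H-u$ arbitrarily close to the boundary of $\MP(H-u)$: already for the path $u-w-v$, the marginal at $w$ tends to $1$ as the activities grow, so the naive iteration of the single-vertex bound cannot give any constant lower bound. Worse, the classical Heilmann--Lieb correlation for matching polynomials yields $Z_H\,Z_{H-\{u,v\}} \le Z_{H-u}\,Z_{H-v}$, i.e.\ the unsaturation events are \emph{negatively} correlated, so a product-of-marginals bound is unavailable.

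The main obstacle is therefore to quantitatively compare the hardcore marginals on $H$ and on $H-u$ using only the global polytope slack $\delta$. The plan is to follow Kahn \cite{Kah96} and Kahn--Kayll \cite{KaKa97}: expand $\log Z_{H-u}/Z_H$ as a cluster/path series in the activities along a chain connecting $H$ to $H-\{u,v\}$, and use the vertex slack $\sum_{e\ni w}x_e \le 1-\delta$ to show that the expansion converges with total mass bounded below in terms of $\delta$. This yields $Z_{H-\{u,v\}}/Z_{H-u} \ge \rho_\delta'$ for some $\rho_\delta'=\rho_\delta'(\delta)>0$, and combining with $Z_{H-u}/Z_H\ge\delta$ gives the required $\rho_\delta = \delta\,\rho_\delta'$. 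The small examples (the path, the star $K_{1,n}$) suggest $\rho_\delta$ is linear in $\delta$, which is consistent with what this expansion produces, and also indicate that the odd-set part of Edmonds' description is not needed for the extremal case.
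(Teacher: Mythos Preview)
The paper does not prove this lemma; it is quoted from Kahn and Kayll~\cite{KaKa97} and used as a black box in Section~\ref{background}. So there is no in-paper proof to compare against, and your proposal should be judged on its own.

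Your first step is clean and correct: the vertex inequalities in Edmonds' description, applied to $\vec x\in(1-\delta)\,\MP(H)$, give $\sum_{e\ni w}x_e\le1-\delta$ and hence $Z_{H-w}/Z_H=\mathbf{Pr}(M\text{ misses }w)\ge\delta$ for every vertex~$w$. You also correctly isolate the real difficulty: the unsaturation events at $u$ and $v$ are \emph{negatively} correlated under the hardcore measure (the Heilmann--Lieb inequality you quote goes the wrong way), and deleting $u$ can push the induced marginals on $H-u$ all the way to the boundary of $\MP(H-u)$, so the single-vertex bound cannot simply be iterated.

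Your second step, however, is not a proof but a plan. You write that you will ``follow Kahn~\cite{Kah96} and Kahn--Kayll~\cite{KaKa97}: expand $\log Z_{H-u}/Z_H$ as a cluster/path series \ldots\ and use the vertex slack \ldots\ to show that the expansion converges with total mass bounded below in terms of~$\delta$.'' That sentence is essentially a citation of the very lemma you are trying to prove: the content of \cite[Lemma~4.1]{KaKa97} \emph{is} the quantitative control of these ratios via such an expansion. Nothing in your write-up explains why the slack $\delta$ at the vertices of $H$ translates into a uniform lower bound on $Z_{H-\{u,v\}}/Z_{H-u}$ once $u$ has been removed, and your own path example shows this is exactly where the work lies. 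If you want a self-contained argument you must actually carry out the recursion (e.g.\ via $Z_H=Z_{H-e}+\lambda_e\,Z_{H-\{u',v'\}}$ along edges $e=u'v'$) and show concretely how the bound propagates; as written, the proposal stops at the point where the genuine argument begins.
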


\subsection{Hardcore Distributions and Edge-Colouring}\label{sec6.2}

We present here Kahn's algorithm for list edge-colouring of multigraphs
first introduced and analysed in~\cite{Kah00}. We continue to use the
notation of the previous subsection. In particular, we suppose that~$H$ is
a multigraph and~$L$ a list assignment of colours to the edges of~$H$ so
that the conditions of Theorem~\ref{kahn-main} are satisfied. By
Lemma~\ref{lem-hardcore} there exists a hardcore distribution~$\{P_M\}$
with marginals $\{|L(e)|^{-1}\}_{e\in E(H)}$, which in addition satisfies
the property of Lemma~\ref{lem-hd1}. Let~$\{\lambda_e\}$ be the activities
on the edges (which are unique by Theorem~\ref{lem-hardcore}) corresponding
to this distribution. An extra property is indeed true: For every
subgraph~$H^*$ of~$H$ it is possible to find a hardcore distribution
$\{P^*_M\}$ with corresponding marginals $|L(e)|^{-1}$ for $e\in E(H^*)$.
The corresponding activities~$\lambda^*_e$ will in general be different
from the~$\lambda_e$'s.

The algorithm works as follows: Let
$\mathcal{L}=\!\bigcup\limits_{e\in E(H)}\!L(e)$ be the union of the
colours in the lists. For each colour~$\alpha$, let us define the
\emph{colour graph~$H_\alpha$} to be the graph containing all the edges
whose lists contain the colour~$\alpha$. And denote
by~$\{\lambda_{\alpha,e}\}$ the activities producing the hardcore
distribution with marginals $|L(e)|^{-1}$ for $e\in E(H_\alpha)$. The
colouring procedure consists in a finite number of iterations of a
procedure that we may call \emph{naive colouring}. At step~$i$ of the
iteration, we are left with subgraphs~$H^i_\alpha$ containing some
uncoloured edges whose lists contain the colour~$\alpha$. Of course we have
$H^i_\alpha\subseteq H^{i-1}_\alpha\subseteq\dots\subseteq
H^0_\alpha=H_\alpha.$

The naive colouring procedure at step $i+1$ consists of the following
sub-steps.{

  \smallskip
  \qitee{(a)}For each colour $\alpha\in\mathcal{L}$, choose independently
  of the other colours a random matching
  $M^{i+1}_\alpha\subseteq E(H^i_\alpha)$ according to the hardcore
  distribution defined by the activities~$\lambda_{\alpha,e}$ on the edges
  $e\in E(H^i_\alpha)$.

  \smallskip
  \qitee{(b)}If an edge~$e$ is in one or more of the matchings
  $M^{i+1}_\alpha$ chosen above, then choose one of the colours from those,
  chosen uniformly at random, and colour~$e$ with that colour.

  \smallskip
  \qitee{(c)}For each colour~$\alpha$, form $H^{i+1}_\alpha$ by removing
  from~$H^i_\alpha$ all the edges that received some colour at this stage,
  and all vertices that are incident to one of the edges coloured
  with~$\alpha$. (While removing a vertex, all the edges incident to it are
  of course removed as well.)

}\smallskip\noindent
Note that the process above can be described also in terms of
subgraphs~$H^i$ of the original multigraph~$H$, where the edges of~$H^i$
are the edges that are still uncoloured after step~$i$, and each edge~$e$
in~$H^i$ has a list of colours~$L^i(e)$ formed by all colours~$\alpha$ for
which $e\in E(H^i_\alpha)$. Also note that the
activities~$\lambda_{\alpha,e}$ remain unchanged all through the process.

A sufficient number of iterations of the naive colouring procedure results
in a graph~$H^I$, consisting of all the uncoloured edges at this step, such
that~$H^I$ has maximum degree~$T$, for some integer~$T$, and that the list
sizes are at least~$2T$, i.e., each uncoloured edge is in at least~$2T$ of
the $H^I_\alpha$'s. (Remember that the conditions of
Theorem~\ref{kahn-main} imply that the lists are quite large at the
beginning.) At this stage it is easy to finish the procedure by a simple
greedy algorithm.

\medskip
The heart of the analysis of the above algorithm in Kahn's approach is the
following strong lemma, the proof of which can be found in~\cite{Kah00}.
(To avoid confusion between an edge~`$e$' and the base of the natural
logarithms~2.718\ldots, we will use a roman~`$\mathrm{e}$' for the latter
one.)

\begin{lemma}[Kahn {\cite[Lemma~3.1]{Kah00}}\,]\label{keylem}\mbox{}\\*
  For each $K>0$ and $0<\eta<1$, there are constants
  $0<\xi_{K,\eta}\le\eta$ and $\Delta_{K,\eta}$ such that the following
  holds for all $\Delta\ge\Delta_{K,\eta}$. Let~$H$ be a multigraph with
  lists~$L(e)$ of colours for each edge~$e$. For each colour~$\alpha$,
  define the colour graph~$H_\alpha$ as above. Finally, for each
  colour~$\alpha$ we are given a hardcore distribution on the matchings
  of~$H_\alpha$ with activities $\{\lambda_{\alpha,e}\}_{e\in E(H_\alpha)}$
  and marginals $\{x_{\alpha,e}\}_{e\in E(H_\alpha)}$. Suppose the
  following conditions are satisfied:{

    \smallskip
    \qite{$\bullet$}for every vertex~$v$, $d_H(v)\le\Delta$;

    \smallskip
    \qite{$\bullet$}for every colour~$\alpha$ and edge $e\in E(H_\alpha)$,
    $\lambda_{\alpha,e}\le\dfrac{K}\Delta$; and

    \smallskip
    \qite{$\bullet$}for every edge~$e$,
    $1-\xi_{K,\eta}\le\sum\limits_{\alpha\,in\,L(e)}\!x_{\alpha,e}\le
    1+\xi_{K,\eta}$.

  }\smallskip\noindent
  Then with positive probability the naive colouring procedure described
  above gives matchings $M_\alpha\subseteq E(H_\alpha)$ for all
  colours~$\alpha$, so that if we set $H^*=H-\bigcup_{\alpha'}M_{\alpha'}$,
  $H^*_\alpha=H_\alpha-V(M_\alpha)-\bigcup_{\alpha'}M_{\alpha'}$, and form
  lists~$L^*(e)$ for all edges $e\in E(H^*)$ by removing no longer allowed
  colours from~$L(e)$, we have:{

    \smallskip
    \qite{$\bullet$}for every vertex~$v$,
    $d_{H^*}(v)\le
    \dfrac{1+\eta}{1+\xi_{K,\eta}}\,\mathrm{e}^{-1}\,\Delta$; and

    \smallskip
    \qite{$\bullet$}for every edge~$e$ in~$H^*$,
    $1-\eta\le\sum\limits_{\alpha\,\in\,L^*_{\strut}(e)}\!
    x^*_{\alpha,e}\le 1+\eta$.

  }\noindent
  Here $\{x^*_{\alpha,e}\}_{e\in E(H^*_\alpha)}$ are the marginals
  associated to $\lambda_{\alpha,e}$ in~$H^*_\alpha$.
\end{lemma}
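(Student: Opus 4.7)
The plan is to carry out a semi-random ``nibble'' analysis of the naive colouring round combined with the Lovász Local Lemma. The two quantities to control after one round are, for each vertex $v$, the residual degree $d_{H^*}(v)$, and for each surviving edge $e$, the new list-sum $S_e = \sum_{\alpha \in L^*(e)} x^*_{\alpha,e}$. First I would show that both are tightly concentrated around carefully computed expected values, and then I would use those rarity bounds together with the local structure of the process to invoke the LLL.

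For the expectations, the starting observation is that an edge $e$ survives (is not coloured in this round) if and only if $e \notin M_\alpha$ for every $\alpha \in L(e)$, and these events are independent across colours by construction of the naive procedure. Using $x_{\alpha,e} \le \lambda_{\alpha,e} \le K/\Delta$ and $\sum_{\alpha \in L(e)} x_{\alpha,e} \in [1-\xi,1+\xi]$, a direct computation gives
\[\Pr[e \text{ survives}] \:=\: \prod_{\alpha \in L(e)}(1 - x_{\alpha,e}) \:=\: \mathrm{e}^{-\sum_\alpha x_{\alpha,e}}\,\bigl(1 + O(K/\Delta)\bigr),\]
so this probability lies close to $\mathrm{e}^{-1}$. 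Summing over the at most $\Delta$ edges at $v$ yields $\mathbb{E}[d_{H^*}(v)] \le \tfrac{1+\eta/2}{1+\xi}\,\mathrm{e}^{-1}\,\Delta$, provided $\xi_{K,\eta}$ is chosen small enough. For $S_e$, I would express $x^*_{\alpha,e}$ through the new activities $\lambda^*_{\alpha,e}$ on $H^*_\alpha$, compare with the ``ideal'' value $x_{\alpha,e}/\Pr[e\in E(H^*_\alpha)]$, and condition on whether $e$ remains in $H^*_\alpha$ to show that $\mathbb{E}[S_e]$ stays within $[1-\eta/2,\,1+\eta/2]$ up to an $O(K/\Delta)$ error.

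The main obstacle is concentration. The matchings $M_\alpha$ are drawn from hardcore distributions, and the new marginals $x^*_{\alpha,e}$ are themselves nonlinear, implicitly defined functions of the random outcome on $H^*_\alpha$, so ordinary bounded-differences or Azuma arguments do not apply directly. To get around this I would invoke the long-range independence of hardcore distributions established by Kahn and Kayll (a strengthening of Lemma~\ref{lem-hd1}, in which the joint probability that a random matching avoids a prescribed set of vertices factors approximately over distant vertices). This lets one approximate $d_{H^*}(v)$ and $S_e$ by sums of nearly independent contributions coming from short-range neighbourhoods, on which a Talagrand-style inequality shows that each deviates from its expectation by more than, say, $\Delta^{1/2}\log\Delta$ with probability at most $\Delta^{-C}$ for any prescribed~$C$.

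To finish I would apply the Lovász Local Lemma. Each bad event (a vertex $v$ violating its degree bound, or an edge $e$ violating its list-sum bound) depends only on the random choices made by matchings using edges within bounded graph-distance of $v$ or $e$, so the dependency graph has degree polynomial in $\Delta$. With the rarity bound above, the symmetric LLL delivers positive probability that no bad event occurs, producing an outcome in which $H^*$ simultaneously satisfies both conclusions of the lemma. The hard core of the argument, as in Kahn's original proof, is thus the combination of precise control over how hardcore marginals evolve under small random deletions with the long-range independence machinery needed to turn expectation estimates into genuine concentration.
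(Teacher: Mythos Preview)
The paper does not give its own proof of this lemma; it explicitly states that ``the proof of which can be found in~\cite{Kah00}'' and merely quotes the statement from Kahn's paper. So there is no proof in the paper to compare your attempt against.

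That said, your outline is recognisably the skeleton of Kahn's argument: compute the expected survival probability of an edge via independence across colours, control the expected residual degree and the expected new list-sum, use the Kahn--Kayll long-range decay of correlations in hardcore distributions to manufacture concentration, and finish with the Local Lemma. Two points where your sketch is loose compared to what Kahn actually has to do. First, a minor slip: the activities do not change between rounds --- the lemma defines $x^*_{\alpha,e}$ as the marginals of the \emph{same} activities $\lambda_{\alpha,e}$ on the smaller graph $H^*_\alpha$, so there are no ``new activities $\lambda^*_{\alpha,e}$'' to introduce. Second, and more substantively, your LLL step asserts that each bad event depends only on random choices within bounded graph-distance. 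This is not literally true for the list-sum events: $x^*_{\alpha,e}$ is a function of the \emph{entire} random subgraph $H^*_\alpha$, so the event ``$S_e$ is out of range'' is not locally determined. Kahn's actual argument does not apply the LLL to those global events directly; instead one defines auxiliary, genuinely local bad events (built from the short-range approximants you allude to), shows these have the required rarity and bounded dependency, and separately proves that when all local events are good the global quantities $S_e$ are automatically in range thanks to the exponential decay of influence. Your sketch gestures at this localisation but conflates it with the LLL application itself; getting the order of these two steps right is where most of the work in~\cite{Kah00} lies.
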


\noindent
In other words, the lemma guarantees that after one iteration of the naive
colouring procedure, with positive probability the multigraph formed by the
uncoloured edges has maximum degrees bounded by
$\dfrac{1+\eta}{1+\xi_{K,\eta}}\,\mathrm{e}^{-1}\,\Delta$, while the sum of
the marginal probabilities~$x^*_{\alpha,e}$ for every edge~$e$ will be
close to~1.

In the next subsection we will combine all the strands and use the lemma
above to conclude the proof of Theorem~\ref{kahn-main}.

\subsection{Completing the Proof of Theorem~\ref{kahn-main} --- after Kahn}

Let $0<\delta<1$ and $\nu>0$. We must prove the existence of
a~$\Delta_{\delta,\nu}$ such that for $\Delta\ge\Delta_{\delta,\nu}$ the
following holds. Let~$H$ be a multigraph and~$L$ a list assignment of
colours to the edges of~$H$ so that{

  \smallskip
  \qite{$\bullet$}for every vertex~$v$, $d_H(v)\le\Delta$;

  \smallskip
  \qite{$\bullet$}for all edges $e\in E(H$), $|L(e)|\ge\nu\,\Delta$;

  \smallskip
  \qite{$\bullet$}the vector $\vec{x}=(x_e)$ with $x_e=\dfrac1{|L(e)|}$ for
  all $e\in E(H)$ is an element of $(1-\delta)\,\MP(H)$.

}\smallskip\noindent
Then there should exist a proper edge-colouring of~$H$, where each edge
receives a colour from its own list.

For each colour~$\alpha$, define the colour graph~$H_\alpha$ as in the
previous subsection. For each colour~$\alpha$ and edge~$e$, set
$x_{\alpha,e}=x_e=\dfrac1{|L(e)|}$, and let
$\{\lambda_{\alpha,e}\}_{e\in E(H_\alpha)}$ be the activities associated
with the marginals~$x_{\alpha,e}$ on~$H_\alpha$.

Since for every edge~$e$ we have
$\sum\limits_{\alpha\,\in\,L(e)}\!x_{\alpha,e}=
\sum\limits_{\alpha\,\in\,L(e)}\!|L(e)|^{-1}=1$, we certainly know that{

  \smallskip
  \qite{$\bullet$}for every edge~$e$ and $\xi>0$,
  $1-\xi\le\sum\limits_{\alpha\,\in\,L(e)}\!x_{\alpha,e}\le1+\xi$.

}\smallskip\noindent
We next bound the activities~$\lambda_{\alpha,e}$, using
Lemma~\ref{lem-hd1}. First observe that for all~$\alpha$ the vector
$(x_{\alpha,e})_{e\in E(H_\alpha)}$ is in $(1-\delta)\,\MP(H_\alpha)$. So
by Lemma~\ref{lem-hd1} there is a constant~$\rho_\delta$ such that
if~$M_\alpha$ is chosen according to the hardcore distribution with
marginals $\{x_{\alpha,e}\}$ on~$H_\alpha$, then for all
$u,v\in E(H_\alpha)$, we have $\mathbf{Pr}(\text{$M_\alpha$ does not touch
  $u$ and $v$})>\rho_\delta$. Let $e=uv$ be an edge of~$H_\alpha$. We have
\[x_{\alpha,e}\:=\:\mathbf{Pr}(\text{$M_\alpha$ contains $e$})\:=\:
\lambda_{\alpha,e}\,\mathbf{Pr}(\text{$M_\alpha$ does not touch $u$ and
  $v$})\:>\: \lambda_{\alpha,e}\,\rho_\delta.\]
Given the fact that $x_{\alpha,e}=\dfrac1{|L(e)|}$ and
$|L(e)|\ge\nu\,\Delta$, and setting
$K_{\delta,\nu}=\dfrac1{\rho_\delta\,\nu}$, we infer that
$\lambda_{\alpha,e}<\dfrac{x_{\alpha,e}}{\rho_\delta}\le
\dfrac1{\rho_\delta\,\nu\,\Delta}=\dfrac{K_{\delta,\nu}}\Delta$.

We have shown that there exists a $K_{\delta,\nu}>0$ such that{

  \smallskip
  \qite{$\bullet$}for every colour~$\alpha$ and edge $e\in E(H_\alpha)$,
  $\lambda_{\alpha,e}\le\dfrac{K_{\delta,\nu}}\Delta$.

}\smallskip\noindent
Suppose we repeat the naive colouring procedure from the previous
subsection $s=s_{K_{\delta,\nu}}$ times (where~$s_{K_{\delta,\nu}}$ is a
fixed constant to be made more precise later). Let~$H^i$ be the subgraph
of~$H$ formed by the edges that are as yet uncoloured at step~$i$, and for
each $e\in E(H^i)$ let~$L^i(e)$ be the list of colours from~$L(e)$ that are
still allowed for~$e$ at that stage.

Let $\eta_s=1-\mathrm{e}^{-1}$, and recursively in the up-to-down order for
$i=s-1,\dots,1$, set $\eta_i=\xi_{K_{\delta,\nu},\eta_{i+1}}$, where
$\xi_{K_{\delta,\nu},\eta_{i+1}}$ is the function given by
Lemma~\ref{keylem}. Let $\Delta_{\delta,\nu}=
\max\limits_{i=1,\dots,s}\Delta_{K_{\delta,\nu},\eta_i}$
($\Delta_{K_{\delta,\nu},\eta_i}$ according to Lemma~\ref{keylem} again),
and $\eta_0=0$. By applying Lemma~\ref{keylem} and the observations above,
we can ensure inductively, starting from $i=0$, that for
$\Delta\ge\Delta_{\delta,\nu}$, with positive probability the following
conditions are satisfied for all $i=0,\ldots,s$.{

  \smallskip
  \qite{$\bullet$}For all vertices~$v$, $d_{H^{i}}(v)\le T_i$, where
  $T_0=\Delta$ and
  $T_i=\dfrac{1+\eta_i}{1+\eta_{i-1}}\,\mathrm{e}^{-1}\,T_{i-1}$ for
  $i\ge1$; and

  \smallskip
  \qite{$\bullet$}For all edges $e\in E(H^i)$,
  $1-\eta_i\le\sum\limits_{\alpha\,\in\,L^i(e)}x^i_{\alpha,e}\le1+\eta_i$,
  where $\{x^i_{\alpha,e}\}_{e\in E(H^i)}$ are the marginals associated to
  the hardcore distribution with activities~$\lambda_{\alpha,e}$
  in~$H^i_\alpha$.

}\medskip\noindent
It follows that after~$s$ steps, with positive probability we have{

  \smallskip
  \qite{$\bullet$}for all vertices~$v$,
  $d_{H^s}(v)\le (2-\mathrm{e}^{-1})\,\mathrm{e}^{-s}\,\Delta$; and

  \smallskip
  \qite{$\bullet$}for all edges $e\in E(H^s)$,
  $\mathrm{e}^{-1}\le\sum\limits_{\alpha\,\in\,L^s(e)}\!x^{s}_{\alpha,e}\le
  2-\mathrm{e}^{-1}$.

}\smallskip\noindent
We note that for an edge $e=uv$,
\[x^s_{\alpha,e}\:=\:\lambda_{\alpha,e}\cdot \mathbf{Pr}(\text{$M_\alpha^s$
  does not touch $u$ and $v$})\:\le\:\lambda_{\alpha,e},\]
which implies that
$x^s_{\alpha,e}\le\lambda_{\alpha,e}\le\dfrac{K_{\delta,\nu}}\Delta$. We
infer that for all $e\in E(H^s)$,
\[|L^s(e)|\:=\:|\{\,\alpha\mid e\in E(H^s_\alpha)\,\}|\:\ge\:
\frac\Delta{\mathrm{e}\,K_{\delta,\nu}}.\]
Let $T=\dfrac\Delta{2\mathrm{e}\,K_{\delta,\nu}}$. It is now clear that if
we choose the value of~$s$ in such a way that
$2\mathrm{e}^{-s}\le\dfrac1{2\mathrm{e}\,K_{\delta,\nu}}$ (in other words,
by setting $s=s_{K_{\delta,\nu}}\ge\ln(4K_{\delta,\nu})+1$), we can ensure
with positive probability that
\[d_{H^s}(v)\:\le\:T\quad\text{for all $v\in V(H^s)$}\qquad\text{and}
\qquad|L^s(e)|\:\ge\:2T\quad\text{for each $e\in E(H^s)$}.\]
This finally shows that we can proceed using the greedy algorithm in~$H^s$,
in order to extend the resulting colouring from the naive colouring
procedure in Subsection~\ref{sec6.2} to a colouring of the whole graph.



\begin{thebibliography}{11}

\bibitem{AgHa00} G. Agnarsson and M.M. Halld\'orsson, \textsl{Coloring
    powers of planar graphs}. SIAM~J.\ Discrete\ Math.~\textbf{16} (2003),
  651--662.

\bibitem{BoMu08} J.A.~Bondy and U.S.R.~Murty, \textsl{Graph Theory}. Grad.\
  Texts in Math.~\textbf{244}, Springer-Verlag, New York, 2008.

\bibitem{Bor84} O.V. Borodin, \textsl{Solution of the Ringel problem on
    vertex-face coloring of planar graphs and coloring of 1-planar graphs}
  (in Russian). Metody Diskret.\ Analyz.~\textbf{41} (1984), 12--26.

\bibitem{Bor+} O.V. Borodin, H.J. Broersma, A. Glebov, and J. van den
  Heuvel, \textsl{Minimal degrees and chromatic numbers of squares of
    planar graphs} (in Russian). Diskretn.\ Anal.\ Issled.\ Oper.\
  Ser.~1~\textbf{8}, no.~4 (2001), 9--33.

\bibitem{Bor07} O.V. Borodin, H.J. Broersma, A. Glebov, and J. van den
  Heuvel, \textsl{A new upper bound on the cyclic chromatic number}.
  J.~Graph Theory~\textbf{54} (2007), 58--72.

\bibitem{BorSZ} O.V. Borodin, D.P. Sanders, and Y. Zhao, \textsl{On cyclic
    colorings and their generalizations}. Discrete Math.~\textbf{203}
  (1999), 23--40.

\bibitem{CvdH} N. Cohen and J. van den Heuvel, \textsl{An exact bound on
    the clique number of the square of a planar graph}. In preparation.

\bibitem{Die05} R.~Diestel, \textsl{Graph Theory}. Grad.\ Texts in
  Math.~\textbf{173}, Springer-Verlag, Berlin, 2005.

\bibitem{Ed65} J. Edmonds, \textsl{Maximum matching and a polyhedron with
    $0,1$-vertices}. J.~Res.\ Nat.\ Bur.\ Standards Sect.~B~\textbf{69B}
  (1965), 125--130.

\bibitem{HHMR} F.~Havet, J.~van den Heuvel, C.~McDiarmid, and B.~Reed,
  \textsl{List colouring squares of planar graphs}. Preprint (2008),
  \url{arxiv.org/abs/0807.3233}.

\bibitem{HS93} P.~Hell and K.~Seyffarth, \textsl{Largest planar graphs of
    diameter two and fixed maximum degree}. Discrete Math.~\textbf{111}
  (1993), 313--322.

\bibitem{HW07} T.J. Hetherington and D.R. Woodall, \textsl{List-colouring
    the square of a $K_4$-minor-free graph}. Discrete Math.~\textbf{308}
  (2008), 4037--4043.

\bibitem{vdHMG} J. van den Heuvel and S. McGuinness, \textsl{Coloring the
    square of a planar graph}. J.~Graph Theory~\textbf{42} (2003),
  110--124.

\bibitem{JeTo95} T.R.~Jensen and B.~Toft, \textsl{Graph Coloring Problems}.
  John-Wiley~\& Sons, New York, 1995.

\bibitem{Jon93} T.K. Jonas, \textsl{Graph coloring analogues with a
    condition at distance two: $L(2, 1)$-labelings and list
    $\lambda$-labelings}. Ph.D.~Thesis, University of South Carolina, 1993.

\bibitem{Kah96} J.~Kahn, \textsl{Asymptotics of the chromatic index for
    multigraphs}. J.~Combin.\ Theory Ser.~B~{\textbf 68} (1996), 233--254.

\bibitem{Kah00} J. Kahn, \textsl{Asymptotics of the list-chromatic index
    for multigraphs}. Random Structures Algorithms~\textbf{17} (2000),
  117--156.

\bibitem{KaKa97} J. Kahn and P.M. Kayll, \textsl{On the stochastic
    independence properties of hard-core distributions}.
  Combinatorica~\textbf{17} (1997), 369--391.

\bibitem{KoWo01} A.V. Kostochka and D.R. Woodall, \textsl{Choosability
    conjectures and multicircuits}. Discrete Math.~\textbf{240} (2001),
  123--143.

\bibitem{Lee90} C.W.~Lee, \textsl{Some recent results on convex polytopes}.
  In: J.C.~Lagarias and M.J. Todd, eds., \textsl{Mathematical Developments
    Arising from Linear Programming}. Contemp.\ Math.~\textbf{114} (1990),
  3--19.

\bibitem{LWZ03} K.-W. Lih, W.F. Wang and X. Zhu, \textsl{Coloring the
    square of a $K_4$-minor free graph}. Discrete Math.~\textbf{269}
  (2003), 303--309.

\bibitem{Mad68} W. Mader, \textsl{Homomorphies\"atze f\"ur Graphen}, Math.\
  Ann.~\textbf{178} (1968), 154--168.

\bibitem{MoTh} B.~Mohar and C.~Thomassen, \textsl{Graphs on Surfaces}.
  Johns Hopkins University Press, Baltimore, 2001.

\bibitem{MoRe} M.~Molloy and B.~Reed, \textsl{Graph Colouring and the
    Probabilistic Method}. Algorithms Combin.~\textbf{23}, Springer-Verlag,
  Berlin, 2002.

\bibitem{MoSa02} M. Molloy and M.R. Salavatipour, \textsl{A bound on the
    chromatic number of the square of a planar graph}. J.~Combin.\ Theory
  Ser.~B~\textbf{94} (2005), 189--213.

\bibitem{OP69} O.~Ore and M.D.~Plummer, \textsl{Cyclic coloration of plane
    graphs}. In: \textsl{Recent Progress in Combinatorics; Proceedings of
    the Third Waterloo Conference on Combinatorics}. Academic Press, San
  Diego (1969) 287--293.

\bibitem{RSW92} Y.~Rabinovich, A.~Sinclair and A.~Wigderson,
  \textsl{Quadratic dynamical systems}. In: \textsl{Proceedings of the 33rd
    Annual Conference on Foundations of Computer Science (FOCS)}, (1992),
  304--313.

\bibitem{RS03} N. Robertson and P. Seymour, \textsl{Graph minors XVI.
    Excluding a non-planar graph.} J.~Combin.\ Theory Ser.~B~\textbf{81}
  (2003), 43--76.

\bibitem{San01} D.P.~Sanders and Y.~Zhao, \textsl{A new bound on the cyclic
    chromatic number}. J.~Combin.\ Theory Ser.~B~\textbf{83} (2001),
  102--111.

\bibitem{Sch03} A. Schrijver, \textsl{Combinatorial Optimization; Polyhedra
    and Efficiency}. Algorithms Combin.~\textbf{24}, Springer-Verlag,
  Berlin, 2003.

\bibitem{Sh49} C.E. Shannon, \textsl{A theorem on colouring lines of a
    network}. J.~Math.\ Physics~\textbf{28} (1949), 148--151.

\bibitem{Viz64} V.G. Vizing, \textsl{On an estimate of the chromatic class
    of a $p$-graph} (in Russian). Metody Diskret.\ Analiz.~\textbf{3}
  (1964), 25--30.

\bibitem{Weg77} G.~Wegner, \textsl{Graphs with given diameter and a
    coloring problem}. Technical Report, University of Dortmund, 1977.

\bibitem{Won96} S.A. Wong, \textsl{Colouring graphs with respect to
    distance}. M.Sc.~Thesis, Department of Combinatorics and Optimization,
  University of Waterloo, 1996.

\end{thebibliography}
\end{document}